\documentclass[12pt]{amsart}
\usepackage{amscd,amssymb,latexsym}
\usepackage{fullpage}
\newcommand{\Mdef}[2]{\newcommand{#1}{\relax \ifmmode #2 \else $#2$\fi}}


\newcommand{\injdim}{\mathrm{injdim}}

\newcommand{\supp}{\mathrm{supp}}

\newcommand{\sm }{\wedge}

\newcommand{\tensor}{\otimes}


\newcommand{\sdr}{\rtimes}

\newcommand{\Hom}{\mathrm{Hom}}

\newcommand{\Ext}{\mathrm{Ext}}
\Mdef{\bhom}{\mathbf{\hat{H}om}}

\Mdef{\Mod}{\mathrm{mod}}

\newcommand{\st}{\; | \;}
\newcommand{\hash}{\#}



\newtheorem{thm}{Theorem}[section]
\newtheorem{lemma}[thm]{Lemma}
\newtheorem{prop}[thm]{Proposition}
\newtheorem{cor}[thm]{Corollary}

\theoremstyle{definition}

\newtheorem{defn}[thm]{Definition}

\newtheorem{example}[thm]{Example}

\newtheorem{remark}[thm]{Remark}

\newcommand{\qqed}{\qed \\[1ex]}
\renewenvironment{proof}[1][\hspace*{-.8ex}]{\noindent {\bf Proof #1:\;}}{\qqed}


\Mdef{\PH} {\Phi^H}
\Mdef{\PK} {\Phi^K}
\Mdef{\PL} {\Phi^L}
\Mdef{\PT} {\Phi^{\T}}

\Mdef{\ef}{E{\cF}_+}
\Mdef{\etf}{\widetilde{E}{\cF}}
\Mdef{\eg}{E{G}_+}
\Mdef{\etg}{\tilde{E}{G}}


\Mdef{\infl}{\mathrm{inf}}
\Mdef{\defl}{\mathrm{def}}
\Mdef{\res}{\mathrm{res}}
\Mdef{\ind}{\mathrm{ind}}
\Mdef{\coind}{\mathrm{coind}}

\Mdef{\univ}{\mathcal{U}}


\Mdef{\Fp}{\mathbb{F}_p}
\Mdef{\Zpinfty}{\Z /p^{\infty}}
\Mdef{\Zpadic}{\Z_p^{\wedge}}


%
%
\newcommand{\adjunction}[4]{
\diagram
#1:#2 \rrto<0.7ex> &&
#3  \llto<0.7ex> :#4 
\enddiagram}
%
%

%
%


\newcommand{\lra}{\longrightarrow}
\newcommand{\lla}{\longleftarrow}

\newcommand{\lr}[1]{\langle #1 \rangle}


\newcommand{\Gspectra}{\mbox{$G$-{\bf spectra}}}

\Mdef{\we}{\mathbf{we}}
\Mdef{\fib}{\mathbf{fib}}
\Mdef{\cof}{\mathbf{cof}}
\Mdef{\BI}{\mathcal{BI}}


\newcommand{\colim}{\mathop{  \mathop{\mathrm {lim}} \limits_\rightarrow} \nolimits}


\Mdef{\B}{\mathbb{B}}
\Mdef{\C}{\mathbb{C}}
\Mdef{\D}{\mathbb{D}}
\Mdef{\E}{\mathbb{E}}
\Mdef{\T}{\mathbb{T}}
\Mdef{\F}{\mathbb{F}}
\Mdef{\G}{\mathbb{G}}
\Mdef{\I}{\mathbb{I}}
\Mdef{\N}{\mathbb{N}}
\Mdef{\Q}{\mathbb{Q}}
\Mdef{\R}{\mathbb{R}}
\Mdef{\bbS}{\mathbb{S}}
\Mdef{\Z}{\mathbb{Z}}

\Mdef{\bA}{\mathbb{A}}
\Mdef{\bB}{\mathbb{B}}
\Mdef{\bC}{\mathbb{C}}
\Mdef{\bD}{\mathbb{D}}
\Mdef{\bE}{\mathbb{E}}
\Mdef{\bF}{\mathbb{F}}
\Mdef{\bG}{\mathbb{G}}
\Mdef{\bH}{\mathbb{H}}
\Mdef{\bI}{\mathbb{I}}
\Mdef{\bJ}{\mathbb{J}}
\Mdef{\bK}{\mathbb{K}}
\Mdef{\bL}{\mathbb{L}}
\Mdef{\bM}{\mathbb{M}}
\Mdef{\bN}{\mathbb{N}}
\Mdef{\bO}{\mathbb{O}}
\Mdef{\bP}{\mathbb{P}}
\Mdef{\bQ}{\mathbb{Q}}
\Mdef{\bR}{\mathbb{R}}
\Mdef{\bS}{\mathbb{S}}
\Mdef{\bT}{\mathbb{T}}
\Mdef{\bU}{\mathbb{U}}
\Mdef{\bV}{\mathbb{V}}
\Mdef{\bW}{\mathbb{W}}
\Mdef{\bX}{\mathbb{X}}
\Mdef{\bY}{\mathbb{Y}}
\Mdef{\bZ}{\mathbb{Z}}

\Mdef{\cA}{\mathcal{A}}
\Mdef{\cB}{\mathcal{B}}
\Mdef{\cC}{\mathcal{C}}
\Mdef{\mcD}{\mathcal{D}} 
\Mdef{\cE}{\mathcal{E}}
\Mdef{\cF}{\mathcal{F}}
\Mdef{\cG}{\mathcal{G}}
\Mdef{\mcH}{\mathcal{H}} 
\Mdef{\cI}{\mathcal{I}}
\Mdef{\cJ}{\mathcal{J}}
\Mdef{\cK}{\mathcal{K}}
\Mdef{\mcL}{\mathcal{L}}

\Mdef{\cM}{\mathcal{M}}
\Mdef{\cN}{\mathcal{N}}
\Mdef{\cO}{\mathcal{O}}
\Mdef{\cP}{\mathcal{P}}
\Mdef{\cQ}{\mathcal{Q}}
\Mdef{\mcR}{\mathcal{R}}
\Mdef{\cS}{\mathcal{S}}
\Mdef{\cT}{\mathcal{T}}
\Mdef{\cU}{\mathcal{U}}
\Mdef{\cV}{\mathcal{V}}
\Mdef{\cW}{\mathcal{W}}
\Mdef{\cX}{\mathcal{X}}
\Mdef{\cY}{\mathcal{Y}}
\Mdef{\cZ}{\mathcal{Z}}

\Mdef{\ca}{\mathcal{a}}
\Mdef{\ct}{\mathcal{t}}

\Mdef{\At}{\tilde{A}}
\Mdef{\Bt}{\tilde{B}}
\Mdef{\Ct}{\tilde{C}}
\Mdef{\Et}{\tilde{E}}
\Mdef{\Ht}{\tilde{H}}
\Mdef{\Kt}{\tilde{K}}
\Mdef{\Lt}{\tilde{L}}
\Mdef{\Mt}{\tilde{M}}
\Mdef{\Nt}{\tilde{N}}
\Mdef{\Pt}{\tilde{P}}


\Mdef{\tA}{\tilde{A}}
\Mdef{\tB}{\tilde{B}}
\Mdef{\tC}{\tilde{C}}
\Mdef{\tE}{\tilde{E}}
\Mdef{\tH}{\tilde{H}}
\Mdef{\tK}{\tilde{K}}
\Mdef{\tL}{\tilde{L}}
\Mdef{\tM}{\tilde{M}}
\Mdef{\tN}{\tilde{N}}
\Mdef{\tP}{\tilde{P}}

\Mdef{\ft}{\tilde{f}}
\Mdef{\xt}{\tilde{x}}
\Mdef{\yt}{\tilde{y}}

\Mdef{\Ab}{\overline{A}}
\Mdef{\Bb}{\overline{B}}
\Mdef{\Cb}{\overline{C}}
\Mdef{\Db}{\overline{D}}
\Mdef{\Eb}{\overline{E}}
\Mdef{\Fb}{\overline{F}}
\Mdef{\Gb}{\overline{G}}
\Mdef{\Hb}{\overline{H}}
\Mdef{\Ib}{\overline{I}}
\Mdef{\Jb}{\overline{J}}
\Mdef{\Kb}{\overline{K}}
\Mdef{\Lb}{\overline{L}}
\Mdef{\Mb}{\overline{M}}
\Mdef{\Nb}{\overline{N}}
\Mdef{\Ob}{\overline{O}}
\Mdef{\Pb}{\overline{P}}
\Mdef{\Qb}{\overline{Q}}
\Mdef{\Rb}{\overline{R}}
\Mdef{\Sb}{\overline{S}}
\Mdef{\Tb}{\overline{T}}
\Mdef{\Ub}{\overline{U}}
\Mdef{\Vb}{\overline{V}}
\Mdef{\Wb}{\overline{W}}
\Mdef{\Xb}{\overline{X}}
\Mdef{\Yb}{\overline{Y}}
\Mdef{\Zb}{\overline{Z}}

\Mdef{\db}{\overline{d}}
\Mdef{\hb}{\overline{h}}
\Mdef{\qb}{\overline{q}}
\Mdef{\rb}{\overline{r}}
\Mdef{\tb}{\overline{t}}
\Mdef{\ub}{\overline{u}}
\Mdef{\vb}{\overline{v}}

\Mdef{\hc}{\hat{c}}
\Mdef{\he}{\hat{e}}
\Mdef{\hf}{\hat{f}}
\Mdef{\hA}{\hat{A}}
\Mdef{\hH}{\hat{H}}
\Mdef{\hJ}{\hat{J}}
\Mdef{\hM}{\hat{M}}
\Mdef{\hP}{\hat{P}}
\Mdef{\hQ}{\hat{Q}}

\Mdef{\thetab}{\overline{\theta}}
\Mdef{\phib}{\overline{\phi}}

\Mdef{\uA}{\underline{A}}
\Mdef{\uB}{\underline{B}}
\Mdef{\uC}{\underline{C}}
\Mdef{\uD}{\underline{D}}

\Mdef{\bolda}{\mathbf{a}}
\Mdef{\boldb}{\mathbf{b}}
\Mdef{\bfD}{\mathbf{D}}


\Mdef{\fm}{\frak{m}}
\Mdef{\fp}{\frak{p}}

\newcommand{\fX}{\mathfrak{X}}

\Mdef{\eps}{\epsilon}

\newcommand{\cell}{\mathrm{Cell}}

\input{xypic}
\usepackage{graphicx}
\newcommand{\sub}{\mathrm{Sub}}

\newcommand{\cEi}{\cE^{-1}}

\renewcommand{\tb}{\overline{\times}}

\newcommand{\full}{\mathrm{full}}

\newcommand{\diag}{\mathrm{diag}}

\newcommand{\Vt}{\tilde{V}}

    \newcommand{\cospan}{\lrcorner}
    \newcommand{\module}{\mbox{-mod}}

\newcommand{\cOcK}{\cO_{\cK}}

\newcommand{\siftyV}[1]{S^{\infty V( #1)}}

 \newcommand{\efp}{E\cF_+}

      \newcommand{\Ntspectra}{\Nt\mbox{-{\bf spectra}}}
\newcommand{\freeGspectra}{\mbox{free-$G$-spectra}}
\newcommand{\cofreeGspectra}{\mbox{cofree-$G$-spectra}}
\newcommand{\HBGmodules}{\mbox{$H^*(BG)$-mod}}
\newcommand{\torsionHBGmodules}{\mbox{tors-$H^*(BG)$-mod}}
\newcommand{\completeHBGmodules}{\mbox{comp-$H^*(BG)$-mod}}
\newcommand{\moduleGspectra}{\mbox{-mod-$G$-spectra}}
\newcommand{\moduleNspectra}{\mbox{-mod-$N$-spectra}}
\newcommand{\modulespectra}{\mbox{-mod-spectra}}
\newcommand{\modules}{\mbox{-mod}}
\newcommand{\spectra}{\mathrm{spectra}}

\newcommand{\cSi}{\cS^{-1}}

\newcommand{\Tt}{\tilde{T}}

\newcommand{\Gp}{K^*}
\newcommand{\mcRt}{\tilde{\mcR}}
\newcommand{\cKR}{\cK \mcR}
\newcommand{\cOcKone}{\cO_{\cK_1}}
\newcommand{\cOcKzero}{\cO_{\cK_0}}
\newcommand{\cOcKR}{\cO_{\cKR}}
\newcommand{\Rep}{\mathrm{Rep}}
\newcommand{\piG}{\pi^G}

\setcounter{tocdepth}{1}
\begin{document}
\title{Algebraic models for
1-dimensional categories of rational $G$-spectra}
\author{J.P.C.Greenlees}
\address{Mathematics Institute, Zeeman Building, Coventry CV4, 7AL, UK}
\email{john.greenlees@warwick.ac.uk}

\date{}

\begin{abstract}
In this paper we give algebraic models for rational $G$-spectra for a
compact Lie group $G$ when the geometric isotropy is restricted to lie
in a 1-dimensional block of conjugacy classes. This includes all
blocks of all groups of dimension
1,  semifree spectra, and 1-dimensional blocks for many other
groups $G$.
\end{abstract}

\thanks{The author is grateful for comments, discussions  and related
  collaborations with S.Balchin, D.Barnes, T.Barthel, M.Kedziorek,
  L.Pol, J.Williamson. The work is partially supported by EPSRC Grant
  EP/W036320/1. The author  would also  like to thank the Isaac Newton
  Institute for Mathematical Sciences, Cambridge, for support and
  hospitality during the programme Equivariant homotopy theory in
  context, where later parts of  work on this paper was undertaken. This work was supported by EPSRC grant EP/Z000580/1.  } 
\maketitle
\tableofcontents

\section{Introduction}
If we pick a set $\cV$ of conjugacy classes of (closed) subgroups of  a compact
Lie group $G$ we can
consider the category $\Gspectra|\cV$ of  $G$-spectra whose
geometric isotropy is contained in $\cV$, and it is conjectured
\cite{AGconj} that
there is a small and calculable abelian model $\cA (G|\cV)$ so that the category DG-$\cA
(G|\cV)$ of differential graded objects in $\cA (G|\cV)$ is a model
for $\Gspectra|\cV$ in the sense that there is a Quillen equivalence
$$\Gspectra|\cV\simeq DG-\cA (G|\cV).$$
The purpose of the present paper is to verify the conjecture in a
range of particularly simple cases. 

The space $\fX_G=\sub(G)/G$ of conjugacy classes of closed subgroups
of $G$ has two topologies of interest.
First of all, it has the h-topology, which is the quotient topology of the
Hausdorff metric topology on $\sub (G)$. It also has the Zariski
topology, whose closed sets are the h-closed sets which are also
closed under passage to cotoral subgroups\footnote{$L$ is {\em cotoral} in $K$
if $L$ is normal in $K$ with quotient being a torus. For conjugacy
classes $(L)$ is cotoral in $(K)$ if the relation holds for some
representative subgroups.}. Since $\cV$  is a subset of $\fX_G$ it
also acquires two topologies, though it will not necessarily be a
spectral space. 

The very simplest case is when $\cV$  is h-discrete and there are no non-trivial
cotoral relations (we say $\cV$ is of Thomason height 0).  In this
case the conjecture is immediate from results of 
\cite{gfreeq2}, and the model is described by 
$$\cA(G|\cV)\simeq \prod_{H\in \cV}\cA (G|H)$$
and 
$$\cA (G|H)\simeq \mbox{tors-}H^*(BW_G^e(H))[W_G^d(H)]\modules,  $$
where $W_G(H)=N_G(H)/H$ has identity component $W_G^e(H)$ and
component group $W_G^d(H)$.

It is the purpose of the present paper to prove the conjecture in a
range of  cases when $\cV$ is one step more complicated, so that it has a subspace
$\cK$ of Thomason height 0 and one additional point $\Gp$. The symbol
$\Gp$ will be used for this point throughout this paper. Thus we either have a cotoral
inclusion $K\leq_{ct}\Gp$ for some $K\in \cK$ or else $\Gp$ is an h-limit of points of
$\cK$.  We will also restrict the group theory by assuming $\cK$ consists of finite groups, and 
$\Gp$  has finite index in its normalizer (most examples we make 
explicit  also have $\Gp=G$). This is restrictive, but it permits a simple
exposition and covers the cases of immediate interest.
Section \ref{sec:generalizations} describes some of the complications
that arise as more cases are covered.

The method is closely based on that used for the circle group
and  for the two blocks of  $O(2)$, but it is more uniform and
covers many other cases. The cases covered  include all blocks of
1-dimensional groups and semi-free $G$-spectra when $G$ is a torus.
It also covers the block of full subgroups of a toral group (the
identity component $G_e=\T$ is a torus)  where the $G_d$-module
$H_1(T;\Q)$ is simple. Already these  cases display a range of different behaviour.

It is helpful to bear some examples in mind.

\begin{example}
  \label{eg:intro}
\begin{enumerate}
\item The space $\cV$ need not be
  irreducible. This means that
  $\Gp$ is not necessarily a generic point.

  For example if $\cV$ in the h-topology is the one point compactification of a
countable set and the cotoral relation is trivial. This is the case for the
dihedral subgroups of $O(2)$.

\item The space need not be compact in the h-topology. For example we may add infinitely
  many points to a compact example  without adding the limit point. It is easy to construct such examples 
  with $G$ the 2-torus (see (4)). 

  \item A more extreme form of this is when every neighbourhood of
    $\Gp$ contains an infinite set of which it is not the limit. For
    example $G=\Gp=T^2$ is the 2-torus, and the other points are the
    finite subgroups of $C_1\times T, C_2\times T, C_3\times T, \ldots$. 
\item  The space $\cV$ may be discrete in the  h-topology. For example we
  may take $G=\Gp=T^2$ to be the 2-torus and  $\cK$ to consist of the
  proper subgroups of $T\times 1\subseteq T\times T=G$.
\end{enumerate}
\end{example}

  It is natural to want to generalize these results. 
The arguments presented here also prove the conjecture in many other 1-dimensional 
cases, and the natural level of generality is not yet
clear; some cases may best be treated as subcategories of higher
dimensional examples. In any case,  our focus
here will be on expository simplicity rather than greatest
generality. Section \ref{sec:generalizations} discusses ways to relax
the restrictions.

\subsection{Associated work in preparation}
This paper is the second in a series  of 5 constructing an algebraic
category $\cA (SU(3))$ and showing it gives an algebraic model for 
rational $SU(3)$-spectra. This series gives a concrete 
illustrations of general results in small and accessible 
examples.

The first paper \cite{t2wqalg} describes the group theoretic data that feeds into the construction of an
abelian category $\cA (G)$ for all toral groups $G$ and makes them
explicit for toral subgroups of rank 2 connected groups.
The present paper (which does not logically depend on \cite{t2wqalg})
constructs algebraic models for all relevant 1-dimensional
blocks. The paper \cite{t2wq} gives an
algebraic model for the maximal torus normalizer
in $U(2)$.

The paper \cite{u2q} assembles this information and that
from \cite{gtoralq} to give an abelian category $\cA (U(2))$ in 7
blocks and shows it is an algebraic model for rational
$U(2)$-spectra. Finally, the paper \cite{su3q} constructs $\cA (SU(3))$
in 18 blocks and shows it is equivalent to the category of rational
$SU(3)$-spectra. The most complicated parts of the model for $G=U(2)$
and $G=SU(3)$ are the toral blocks, which are based on the work in the
present paper.

This series is part of a more general programme. Future installments
will consider blocks with Noetherian Balmer spectra \cite{AGnoeth} and
those with no cotoral inclusions \cite{gqwf}. 
An account of the general nature of the models is in preparation
\cite{AVmodel}, and the author hopes that this will be the basis of the proof that the
category of rational $G$-spectra has an algebraic model in general.

\subsection{Contents}

The paper is divided into two parts. Part 1 is essentially
algebraic and Part 2 shows that appropriate categories of rational
$G$-spectra are equivalent to the algebraic models.

The main part of the paper gives an algebraic model for a
1-dimensional block with a single dominant subgroup. To show that this
is useful, we start in Section \ref{sec:subgroups} by showing that for
a 1-dimensional group, the space of subgroups can be decomposed into
such blocks, and furthermore the auxiliary data can be describe in
these terms. Thereafter we have in mind a single 1-dimensional block.
In Section \ref{sec:ingredients} we describe the data required to build the
model, and in Section \ref{sec:sheafdatatop} we describe how this data can be
obtained for compact Lie groups.
In Section \ref{sec:abelian} we explain how to build the model from
the data, and we show how to work with the category, inparticular
proving it  is of injective dimension 1. 

We begin Part 2 in Section \ref{sec:genstrat} by describing the general
strategy for showing a 1-dimensional block has an algebraic model. 

It is rather well known that for a connected compact Lie group $G$,
the category of cofree $G$-spectra is a category of complete modules
over $H^*(BG)$, and that this  can easily be adapted to disconnected groups.
In Section \ref{sec:DEGp} we show that the category of modules over
$DEG_+$ is the corresponding category of modules over
$H^*(BG)$ without any completeness condition, and that the model for cofree $G$-spectra is obtained by
completion. This gives the stalkwise model, and it is quite easy to
assemble this algebraically over the height 0 subgroups. However for
the splicing to the height 1 subgroup we need to have a common
framework. The new complication here is having to deal with the
varying component structure, and the
method for doing that is described in Section \ref{sec:unif}. This
completes the account of the model over height 0 subgroups, and 
finally in Section \ref{sec:dimonemodels} we assemble the information
over height 0 and height 1 groups  to establish that 1-dimensional blocks are algebraic. 
In Section \ref{sec:generalizations} we review the technical
difficulties that need to be tackled to give further generalization.

\part{Algebra}
\section{Full subgroups}
\label{sec:subgroups}

The purpose of this section is to show that for 1-dimensional groups $G$
the category of $G$-spectra breaks into pieces each one of which is
covered by our general analysis. This comes in two steps. Firstly, we
establish a decomposition  indexed by conjugacy classes of
subgroups $V$ of the component group $W$. Secondly, we show that
it suffices to consider the case when $V=W$, in the sense that the
model in the general case can often be easily deduced from the full case. 

For the purpose of this section, 
we suppose that $G$ has identity component $G_e=\T$ a torus and lives 
in an extension 
$$1\lra \T \lra G\stackrel{\pi}\lra W\lra 1.$$

\subsection{Partition by subgroups of the component group}
For any compact Lie group $G$, one may show that the space $\sub(G)/G$
can be partitioned into blocks dominated by a single group. When $G$
is a toral group there is a partition that takes a particularly simple
form. Since any 1-dimensional group $G$ has identity component a circle,
this case is covered automatically.

\begin{lemma}
  \label{lem:decomp}\cite[2.1]{t2wqalg}
For a toral group $G$ as above, the space 
$\fX_G=\sub(G)/G$ of conjugacy classes of subgroups of the toral group 
$G$ is partitioned into pieces, $\cV^G_{\Hb}$  one for each conjugacy
class of subgroups $\Hb$ of $W$. 

If $\Hb\subseteq W$, the set 
$$\cV_{\Hb}^G=\{ (K)\st \pi (K)=\Hb\}$$
is clopen in the Hausdorff metric topology and 
closed under passage to cotoral subgroups. Furthermore, 
$\cV^G_{\Hb}$ is dominated by $\pi^{-1}(\Hb)$ in the sense that it
consists of all subgroups cotoral in $\pi^{-1}(\Hb)$.

Accordingly, the  Balmer spectrum with its Zariski topology is a coproduct 
$$\fX_G=\coprod_{(\Hb)}\cV_{\Hb}^G. $$
\end{lemma}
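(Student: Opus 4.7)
The plan is to derive everything from the single map $\pi_* \colon \fX_G \to \sub(W)/W$ induced by $\pi$, exploiting the fact that $W$ is a finite discrete group (since $G$ is compact with identity component the torus $\T$). On representatives, $\pi_*$ sends $(K)$ to $(\pi(K))$; this is well-defined on conjugacy classes because $\pi$ is a group homomorphism, and by construction its fibres are exactly the subsets $\cV^G_{\Hb}$. The h-topology, cotoral and Zariski assertions then amount to continuity and compatibility properties of this single map.

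For the h-topology claim I would first show that the lifted map $\sub(G) \to \sub(W)$ is continuous for the Hausdorff metric on the source and the discrete topology on the finite set $\sub(W)$, and then descend to conjugacy-class quotients. The continuity argument splits in two. First, since $W$ is finite and $\pi$ is continuous, each $\pi^{-1}(\Hb) \subseteq G$ is a finite union of $\T$-cosets, hence clopen, so Hausdorff convergence $K_n \to K$ with $\pi(K) = \Hb$ forces $K_n \subseteq \pi^{-1}(\Hb)$ eventually. Second, for any $h \in \Hb$ pick $k \in K$ with $\pi(k) = h$, realise $k$ as a limit of points $k_n \in K_n$, and use local constancy of $\pi$ on the clopen coset $k\T$ to get $\pi(k_n) = h$ eventually. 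Together these yield $\pi(K_n) = \Hb$ for large $n$, so $\cV^G_{\Hb}$ is h-clopen.

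The cotoral closure is a one-line observation: given representatives $L \unlhd K$ with $K/L$ a torus, the group $\pi(K)/\pi(L)$ is simultaneously a quotient of the connected group $K/L$ (hence connected) and of the finite group $\pi(K) \leq W$ (hence finite), so it is trivial and $\pi(L) = \pi(K)$. Combining h-clopenness with cotoral closure shows each $\cV^G_{\Hb}$ is Zariski clopen in $\fX_G$, which gives the coproduct decomposition. For the domination claim, every $K \in \cV^G_{\Hb}$ satisfies $K \subseteq \pi^{-1}(\Hb)$ and $\pi^{-1}(\Hb)$ itself lies in $\cV^G_{\Hb}$, so $\pi^{-1}(\Hb)$ is the maximum element of the block under inclusion---the sense in which it dominates.

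The main technical care is in the Hausdorff-metric step, where the equality $\pi(K_n) = \pi(K)$ must be extracted from mere set-theoretic convergence by combining the clopen nature of $\pi^{-1}(\Hb)$ in $G$ with the discreteness of $W$; everything else reduces to elementary group theory.
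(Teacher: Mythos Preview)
The paper does not supply its own proof of this lemma; it is quoted from the companion paper \cite[2.1]{t2wqalg}. So there is no in-paper argument to compare against, and your proposal must be assessed on its own.

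Your arguments for h-clopenness, cotoral closure, and the resulting Zariski coproduct are correct and cleanly organised around the single map $\pi_*\colon \fX_G\to \sub(W)/W$. The Hausdorff-metric step is handled properly via the clopenness of $\pi^{-1}(\Hb)$ in $G$.

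There is one point that needs attention: the domination clause. The lemma asserts that $\cV^G_{\Hb}$ ``consists of all subgroups cotoral in $\pi^{-1}(\Hb)$'', which is strictly stronger than your conclusion that $\pi^{-1}(\Hb)$ is the inclusion-maximum of the block. Your cotoral-closure step gives one containment (anything cotoral in $\pi^{-1}(\Hb)$ lies in $\cV^G_{\Hb}$), but you do not address the converse, and in fact the literal converse fails: for $G=O(2)$ with $\Hb=W$, the dihedral subgroups $D_{2n}$ lie in $\cV^{O(2)}_W$ yet no conjugate of $D_{2n}$ is normal in $O(2)$, so they are not cotoral in $O(2)$. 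The paper itself confirms this reading later, when it partitions $\cK$ into $\cK_1$ (subgroups cotoral in $\Gp$) and $\cK_0$ (subgroups not cotoral in $\Gp$ but with $\Gp$ as h-limit), the $O(2)$ dihedral block being pure Type~0. So the ``cotoral'' phrasing in the lemma is an imprecision; your weaker statement (that $\pi^{-1}(\Hb)$ is the unique maximal element under inclusion, and everything in the block is contained in it) is the correct content. You should flag this explicitly rather than silently substituting a different claim for the one stated.
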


\begin{remark}
(a) The partition is crude, in the sense that the 
 sets $\cV^G_{\Hb}$ can often be decomposed further. 

  (b) Subgroups mapping onto $W$ are called {\em full}, and the
 component of full subgroups is $\cV^G_W$. Most of the rest of the
 paper will focus on full subgroups because the component of
 $\cV^G_V$ can be studied in the group $\Vt:=\pi^{-1}V$ as explained
 in Subsection \ref{subsec:fullreduction} below.

 (c) Since subgroups of $G$ with image $V$ are by definition subgroups
 of $\Vt$, the map $\cV^{\Vt}_V\lra \cV^G_V$ is surjective, and the
 only effect is fusion of $\Vt$-conjugacy classes to form
 $G$-conjugacy classes.  Fusion can nonetheless have significant
 effects (for example the map $\fX_{\T}\lra \fX_G$ factors
 through $\fX_{\T}\lra \fX_{\T}/W$). 
 \end{remark}

 \subsection{Reduction to full subgroups}
 \label{subsec:fullreduction}
Continuing with a toral group $G$,  in analysing $\cV^G_V$,
we may reduce to the case that $V$ is normal in $W$.

To see this, let $N=N_W(V)$. The $W$-conjugacy class $(V)_W$ may split
into several $N$-conjugacy classes $(V)_W=\coprod_i (V_i)_N$. There is
a corresponding splitting of the idempotent $e_V$:
$\res^W_N(e_V)=\sum_i e_{V_i}$. These idempotents may be inflated to
$G$, and we see
$$\Ntspectra|\cV^G_V\simeq e_V\Ntspectra \simeq \prod_i
e_{V_i}\Ntspectra\simeq \prod_i \Ntspectra|\cV^{\Nt}_{V_i}. $$
Since the $V_i$ are conjugate in $G$, the factors are equivalent using
conjugation by elements of $G$. For subgroups $H$ with $\pi
H=V$, the relevant map of underlying spaces is
$$\coprod_i \cV^{\Nt}_{V_i}\lra \cV^G_V.$$

\begin{lemma}
  \label{lem:reducetonormal}
  Suppose $V\subseteq W$ and let $N=N_W(V)$.
  Restriction
induces  a full and faithful functor
$$\Gspectra|\cV^G_V\lra \Ntspectra|\cV^{\Nt}_V. $$
 The essential image 
consists of $\Nt$-spectra which are constant on the factors
$\Ntspectra|\cV^{\Nt}_{V_i}$ in the sense that they correspond under
conjugation by $G$. Composing with restriction to one factor, we
obtain  an equivalence 
$$\Gspectra|\cV^G_V\simeq  \Ntspectra|\cV^{\Nt}_V. $$
\end{lemma}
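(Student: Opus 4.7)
My plan is to reduce the equivalence to two structural facts that together trivialise the comparison. The first is an identification of Weyl data: for any $K \leq \Nt$ with $\pi(K) = V$, I have $N_G(K) = N_{\Nt}(K)$, since any $g \in N_G(K)$ satisfies $\pi(g) \in N_W(\pi K) = N_W(V) = N$ and therefore $g \in \Nt$. A parallel argument shows that two such subgroups are $G$-conjugate iff they are $\Nt$-conjugate, so $\cV^G_V$ and $\cV^{\Nt}_V$ are in natural bijection with matching Weyl-group data point-by-point. This is what makes the equivalence plausible at the level of Balmer spectra and of the local algebraic models one eventually wants to build.

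The second ingredient is the idempotent splitting $\res^G_{\Nt}(e_V) = \sum_i e_{V_i}$ recalled before the lemma, which produces a restriction functor
$$\res^G_{\Nt}: e_V\Gspectra \longrightarrow \bigoplus_i e_{V_i}\Ntspectra.$$
Conjugation by coset representatives $g_j \in G$ lifting $W/N$ permutes the $e_{V_i}$-factors transitively, and the essential image of restriction is the subcategory of ``descent data''---those $\Nt$-spectra whose components in the different summands correspond under these conjugations. Since the $W/N$-action on $\{V_i\}$ is transitive, projection onto the distinguished $e_V$-factor is an equivalence of the descent-data subcategory with $e_V\Ntspectra$; the composite $\Gspectra|\cV^G_V \to \Ntspectra|\cV^{\Nt}_V$ is the functor of the lemma. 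I would check full faithfulness on the compact generators $G/K_+$ (with $K \leq \Nt$ and $\pi K = V$) by computing $\res^G_{\Nt}(G/K_+)$ via the double-coset formula and extracting its $e_V$-component as $\Nt/K_+$; the Weyl-group identity from the first paragraph then forces the $\Hom$-spectra to match. Essential surjectivity follows from the observation that such $\Nt/K_+$ already generate $e_V\Ntspectra$.

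The principal technical hurdle is the bookkeeping in the conjugation step: if $\pi(g_j) \notin N_W(N)$ then $g_j\Nt g_j^{-1} \neq \Nt$, so the map identifying the $e_{V_i}$-factors is really an equivalence through an abstract isomorphism of subgroups $\Nt \cong g_j\Nt g_j^{-1}$ rather than an inner automorphism of $\Nt$. I would deal with this either by absorbing it into a Mackey-style descent framework or, more concretely, by constructing the inverse equivalence as $Y \mapsto e_V \cdot \ind^G_{\Nt}(Y)$ and verifying unit and counit on generators using Burnside-ring idempotents; either route reduces the subtlety to a finite combinatorial check over the orbit $W/N$.
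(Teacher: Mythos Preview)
Your plan is correct, and the second of your two proposed routes---constructing the inverse as $Y\mapsto e_V\cdot\ind^G_{\Nt}(Y)$ and checking unit and counit---is essentially what the paper does, though the paper phrases it dually and more tersely. The paper's proof simply shows that the counit map $G_+\sm_{\Nt}(e\,\res^G_{\Nt}X)\lra X$ is an equivalence in $\Gspectra|\cV^G_V$: by the projection formula this reduces to showing $G_+\sm_{\Nt}eS^0\lra S^0$ is a $\cV^G_V$-equivalence, and since $G/\Nt\cong W/N$ as $G$-sets this becomes the elementary check that $(W/N)_+\sm eS^0\lra S^0$ is an equivalence on $V$-fixed points. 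Full faithfulness then drops out of the adjunction identity $[X,Y]^G\cong[G_+\sm_{\Nt}eX,Y]^G\cong[eX,Y]^{\Nt}$ in one line.

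Your first route, via compact generators and the double-coset formula, also works---indeed the computation $e_V\,\res^G_{\Nt}(G/K_+)\simeq\Nt/K_+$ goes through exactly as you indicate, using that for $g\notin\Nt$ the intersection $\Nt\cap gKg^{-1}$ has $\pi$-image properly contained in some $V_i$ and hence lies outside $\cV^{\Nt}_V$. But this is more bookkeeping than the paper needs: the single fixed-point check on $W/N$ handles everything at once without ever writing down a double coset. Your observation that $N_G(K)=N_{\Nt}(K)$ is correct and appears in the paper, but only in the remark following the lemma, where it is used to compare the algebraic models rather than to prove the equivalence of spectra.
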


\begin{remark}
  This shows directly that the two categories should have equivalent
  models. In fact we observe that  $N_G(V)=N_{\Nt}(V)$, so that the
  sheaf of rings and component structures also agree. Thus,
  corresponding to the equivalence of categories of $G$-spectra we
  have an equivalence
   $$\cA (G|\cV^G_V)=\cA (\Nt|\cV^{\Nt}_V).$$
  \end{remark}

\begin{proof}
  We must show the map is full, faithful and essentially surjective.
  
  The map
  $$[X,Y]^G\lra [G_+\sm_N eX, Y]^G=[eX,Y]^N$$
is induced by   $G_+\sm_{\Nt} S^0\sm X \simeq G_+\sm_{\Nt} eX \lra X$ of
$G$-spectra. Now $G/\Nt=W/N$, and the $V$ fixed points consist of
cosets $wN$ so that  $V^w\subseteq N$. The fact that restriction is an
isomorphism follows since the map $W_+\sm_V  eS^0\lra S^0$ is an
equivalence in $V$-fixed points.
\end{proof}

Next, we may further reduce to working with $\cV^{\Vt}_V$ if we take into
account the action of the Weyl group $N_W(V)/V$.

\begin{lemma}
 If  $N=N_W(V)$, the geometric isotropy of $X$ consists of
subgroups of $\Vt$ then restriction induces an isomorphism 
$$[X,Y]^{\Nt} \stackrel{\cong}\lra \left( [X,Y]^{\Vt}\right)^{N/V}. $$
\end{lemma}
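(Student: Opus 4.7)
The plan is to prove this by the standard transfer--restriction argument, using that rationalization makes the finite index invertible. Since $N = N_W(V)$, the subgroup $V$ is normal in $N$, and pulling back through $\pi$ gives a normal inclusion $\Vt \trianglelefteq \Nt$ with finite discrete quotient $\Nt/\Vt \cong N/V =: Q$. The restriction map $\res \colon [X,Y]^{\Nt} \to [X,Y]^{\Vt}$ automatically factors through $([X,Y]^{\Vt})^Q$, since the conjugation action of $\Nt$ on $\Vt$-equivariant maps descends to a $Q$-action on $[X,Y]^{\Vt}$ that is trivial on the image of $\res$.

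The essential ingredient is the Wirthm\"uller transfer $\tau \colon [X,Y]^{\Vt} \to [X,Y]^{\Nt}$, available because $\Nt/\Vt$ is a finite discrete quotient so that the Wirthm\"uller comparison $\Nt_+ \sm_{\Vt}(-) \to F_{\Vt}(\Nt_+, -)$ carries no dimension shift. The familiar composite identities then read as $\tau \circ \res = \chi(\Nt/\Vt)\cdot \mathrm{id} = |Q| \cdot \mathrm{id}$ on $[X,Y]^{\Nt}$ (the pretransfer composite $X \to \Nt_+ \sm_{\Vt} X \to X$ being the norm, of degree $|Q|$), and $\res \circ \tau = \sum_{q \in Q} q^\ast$ on $[X,Y]^{\Vt}$. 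On the $Q$-fixed subgroup $([X,Y]^{\Vt})^Q$ the latter sum collapses to multiplication by $|Q|$.

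Working rationally, $|Q|$ is a unit, so $\frac{1}{|Q|}\tau$ restricted to $([X,Y]^{\Vt})^Q$ provides a two-sided inverse to $\res$, delivering the isomorphism. The one place requiring genuine care is the verification of the two composite formulas, which boils down to the Wirthm\"uller equivalence and the identification of the pretransfer composite with the norm; everything after that is formal manipulation of a unit. I note in passing that the geometric isotropy hypothesis on $X$ is not in fact called upon in this argument, but it is the natural context for the surrounding reduction to the full case.
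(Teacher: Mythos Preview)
Your transfer--restriction approach is sound in outline, but there is a genuine error in the identification $\tau\circ\res = |Q|\cdot\mathrm{id}$, and your closing remark that the isotropy hypothesis is unused is precisely backwards. The composite $X\to \Nt_+\sm_{\Vt}X\to X$ is multiplication by the equivariant Euler characteristic of the finite $\Nt$-set $\Nt/\Vt$, which is the Burnside ring class $[\Nt/\Vt]\in A(\Nt)$, not the integer $|Q|$. Under the tom Dieck splitting of $A(\Nt)\otimes\Q$ into idempotent pieces indexed by conjugacy classes $(H)$, the mark of $[\Nt/\Vt]$ at $(H)$ is $|(\Nt/\Vt)^H|$, which equals $|Q|$ exactly when $H\subseteq\Vt$ and can be zero otherwise. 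Thus $\tau\circ\res$ acts by $|Q|$ on $[X,Y]^{\Nt}$ only after you know that $[X,Y]^{\Nt}$ is supported on subgroups of $\Vt$---and that is exactly what the geometric isotropy hypothesis on $X$ provides. Without it the statement is false: take $\Nt$ finite, $X=Y=S^0$, and compare $A(\Nt)\otimes\Q$ with $(A(\Vt)\otimes\Q)^{N/V}$.

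Once this is repaired your argument goes through, though by a different route from the paper. The paper instead uses the hypothesis to obtain an equivalence $E(N/V)_+\sm X\simeq X$ of $\Nt$-spectra (since $E(N/V)$, inflated to $\Nt$, has $H$-fixed points contractible unless $H\subseteq\Vt$), and then runs the skeletal-filtration spectral sequence $H^*(N/V;[X,Y]^{\Vt})\Rightarrow [X,Y]^{\Nt}$, which collapses rationally. Both arguments ultimately rest on $|Q|$ being invertible, but the paper's version makes the role of the isotropy hypothesis transparent at the first step.
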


\begin{proof}
We have an equivalence $EN/V_+\sm X\simeq X$, and the spectral
sequence of the skeletal filtration of $EN/V_+$ gives the
isomorphism. 
\end{proof}

\begin{remark}
This shows that to understand $\Nt$-spectra we need only understand
$\Vt$ spectra together with an action of the finite group
$N/V$. However one does need to bear in mind that $N/V$ acts on the
space of subgroups as well as all other elements of the construction. 

Combining this with Lemma \ref{lem:reducetonormal}, as in \cite[6.10]{AGtoral}, this
shows we have a reduction to the case of full subgroups, and we may take
$$\cA (G|\cV^G_V)\simeq \cA (\Nt|\cV^{\Nt}_V)\simeq ``\cA
(\Vt|\cV^{\Vt}_V)[N/V]''.  $$
Note that in this statement $N/V$ may act non-trivially on
$\cV^{\Vt}_V$, so the right hand category is not simply
$\cA (\Vt|\cV^{\Vt}_V)$ with an action of $N/V$, and considerable
elucidation along the lines of \cite{AGtoral}  is necessary.  With
these caveats, we may reduce to the case of full subgroups.

This strategy is especially effective if $G$ is 1-dimensional since the action of $N/V$
on the identity component does not permute subgroups. In this case we
find
$$\cV^{\Nt}_V=\cV^{\Vt}_V, $$
and in fact the Weyl groups are of the same dimension so that the structure
sheaves of rings will agree. The component groups will differ since
the group $N/V$ certainly acts for $\Nt$-spectra. Indeed, 
 if $\pi (K)=V$ there is a map 
$$N_{\Nt}(K)\lra N_N(V)=N, $$
since $K\cap T$ is characteristic, and hence there is a map
$$W_{\Nt}(K)\lra N/V, $$
but this need not be an isomorphism for all $K$.
\end{remark}

\section{Ingredients for the abelian models}
\label{sec:ingredients}

The rest of our analysis supposes that we are considering $G$-spectra
with geometric isotropy in a 1-dimensional space $\cV$ of a special
form. In fact we suppose given a countable set $\cK$ of conjugacy
classes subgroups. As a subspace of $\fX_G$ it is discrete
and we assume $\cV=\cK\amalg \{\Gp\}$.

For example if $G$ is a 1-dimensional group we may choose a subgroup
$V$ of the component group and suppose $\cK$ consists of the finite
subgroups $H$ of $G$ with $\pi (H)=V$ and $\Gp=\pi^{-1}(V)$.

We discuss the form of the space $\cV$ in Subsection
\ref{subsec:formofcV}, and the auxiliary data in Subsection
\ref{subsec:auxiliary}. 

\subsection{Decomposing $\cV$}
\label{subsec:formofcV}

We start with a countable set $\cK$ of conjugacy classes of 
finite subgroups, and $\cV=\cK\amalg \{\Gp\}$. It is convenient to
partition $\cK$. 

\begin{lemma}
  There is  a partition $\cK=\cK_{0}\amalg \cK_1 \amalg \cKR$ into
  Zariski clopen sets where
  \begin{itemize}
  \item $\cK_1$ consists of subgroups cotoral in $\Gp$.
    \item  $\cK_0$ consists of subgroups not cotoral in $\Gp$ but
      $\cK_0$ has $\Gp$ as a limit point and
  \item the remainder $\cKR$ has no limit points in $\cV$.
\end{itemize}
\end{lemma}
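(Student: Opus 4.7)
The plan is to define $\cK_1$ canonically as the cotoral part, to split the complement $\cK':=\cK\setminus\cK_1$ by the single binary question of whether $\Gp$ is an h-limit point of $\cK'$, and then to reduce the Zariski clopen condition to an essentially trivial observation about the topology on $\cK$.

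First I set $\cK_1:=\{K\in\cK:K\leq_{ct}\Gp\}$, which is the unique choice matching the description. The key observation is that $\cK$ is h-discrete as a subspace of $\fX_G$ (as given in the setup), so no point of $\cK$ can be the h-limit of any subset of $\cV=\cK\amalg\{\Gp\}$, and hence $\Gp$ is the only candidate h-limit point in $\cV$ for a subset of $\cK$. Splitting into two cases, if $\Gp$ is not an h-limit point of $\cK'$ I set $\cK_0:=\emptyset$ and $\cKR:=\cK'$, so that $\cKR$ has no limit points in $\cV$; otherwise I set $\cK_0:=\cK'$ and $\cKR:=\emptyset$, so that $\cK_0$ has $\Gp$ as a limit point and $\cKR$ vacuously has none. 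Finer subdivisions are often convenient in practice (for example peeling off a single sequence converging to $\Gp$ to form $\cK_0$ and leaving isolated extras in $\cKR$, as in Example~\ref{eg:intro}(2)), but only existence of some such partition is needed here.

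For the Zariski clopen condition I observe that the Zariski topology on $\cK$ is already discrete. Indeed, the h-topology on $\cK$ is discrete by hypothesis so every subset is h-closed; and any cotoral relation $L\leq_{ct}K$ between finite subgroups forces the quotient $K/L$ to be both finite and a torus, hence trivial, so $L=K$. Thus every subset of $\cK$ is Zariski clopen and all three pieces of the partition automatically satisfy the condition. The main, but very mild, obstacle is that the clopen claim must be interpreted intrinsically on $\cK$ rather than inside $\cV$: inside the full space $\cV$, $\cK_1$ is not Zariski open whenever it is non-empty, because $\cV\setminus\cK_1$ contains $\Gp$ but omits its cotoral descents which lie in $\cK_1$. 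This evaporates once one works inside $\cK$, where the discreteness above applies.
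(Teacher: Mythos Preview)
Your proof is correct and close in spirit to the paper's, but differs in one place worth flagging. In the case where $\Gp$ is an h-limit point of $\cK'=\cK\setminus\cK_1$, you simply put $\cK_0=\cK'$ and $\cKR=\emptyset$, whereas the paper instead picks an h-neighbourhood $U$ of $\Gp$ and sets $\cK_0=\cK'\cap U$, $\cKR=\cK'\setminus U$. Both constructions satisfy the stated conditions: yours is the minimal argument for existence, while the paper's buys the extra flexibility (exploited in the remarks and examples immediately following the lemma) of peeling off into $\cKR$ any h-isolated tail of $\cK'$ that one would rather treat as a 0-dimensional summand. Your explicit verification of the Zariski-clopen condition via discreteness of $\cK$ (no nontrivial cotoral relations among finite groups, $\cK$ h-discrete) is a point the paper's proof leaves implicit, and your observation that the clopen claim must be read inside $\cK$ rather than $\cV$ is correct and worth keeping.
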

\begin{proof}
  If $\Gp$ is not a limit point when we remove $\cK_1$ we take
  $\cKR=\cK\setminus \cK_1$ and $\cK_0=\emptyset$. Otherwise, we
  choose an h-neighbourhood $U$ of $\Gp$, and take  $\cK_0':=
(\cK\setminus \cK_1)\cap U$ and  $\cKR:=(\cK \setminus 
\cK_1)\setminus U$.
\end{proof}

\begin{remark} It is disappointing that we cannot generally give a
  canonical decomposition, and that the pieces cannot be simpler. 

  (i) Example \ref{eg:intro} (3) shows that it may happen that every
  choice of $\cK_0$ contains an infinite set $\cKR'$ without $\Gp$ as a limit
  point, so $\cKR'$ could be moved into $\cKR$.

(ii) Even if $\Gp$ is a limit point of every infinite subset of
$\cK_0$,  the partition is not canonical, since any finite set of points 
can be moved between $\cK_0$ and $\cKR$.
\end{remark}

\begin{remark}
(i) If $\cKR=\emptyset $ and  $\Gp$ is a limit point of every
infinite subset of $\cK_0$, we say $\cV$ is 
{\em   almost irreducible}.
  
(ii)   The topology is
  determined by the closed subsets of $\cV$ not containing $\Gp$. This
  contains the finite subsets of $\cK$, and if it contains no other
  subsets, $\cK \cup \{ \Gp\}$ is the one point compactification of $\cK$. 
  \end{remark}

We
discuss some examples and then show that in many cases it suffices to
deal with the special cases $\cK=\cK_0$ (Type 0) and $\cK=\cK_1$ (Type 1).

The models 
differ in character according to the sizes of $\cK_0$ and $\cK_1$: even in this very special 
context, a wide variety of behaviours is possible.

\begin{example}
  \label{eg:extras}
Here is a small selection of almost irreducible  examples with $\cK_1$ infinite. 
\begin{enumerate}
\item  If $G=T^2$ all subgroups are cotoral in $G$, so in all cases
$\cK=\cK_1$. Nonetheless these cases can vary in character.
\begin{enumerate}
\item If $\cK$ consists of all subgroups $T^2[n]$ then $G=\Gp$ is the  1-point compactification.
\item If $\cK=\cK_1 =\{ T[n]\times T \st n \geq 1\}$
then the h-topology is discrete.
\item If $\cK=\cK_1 =\{ T^2[n] \st n \geq 1\} \cup \{ C_m\times 1\st m\geq 
1\}$ then there is an infinite subset of $\cK_1$ without limit
points. 
\item If $\cK$ consists of all finite subgroups, then it has infinitely
many limit points. 
\end{enumerate}

\item  We may have both $\cK_0$ and $\cK_1$ infinite. For example 
 $G=T\times O(2)$ and then take $\Gp=G$ with 
 $\cK_1=\{ C_m\times O(2)\st m\geq 1\}$, and $\cK_0= \{ T\times D_{2n}
 \st n \geq 1\}$. 
\end{enumerate}
\end{example}

A property holds  {\em almost everywhere} in $\cK$ if it
holds except for a closed subset of $\cV$ lying in $\cK_0\amalg \cKR$
(in the almost irreducible case, this means for all but a finite
number of points of $\cK_0$). 

Decompositions of $\cV$ lead to decompositions of the model. We
illustrate with two special cases.

\begin{example}
  (i) We may always write $\cV=\cV'\amalg \cKR$ and then
  $$\cA (G|\cV)=\cA (G|\cV')\times \cA (G|\cKR), $$
  where $\cA (G|\cKR)$ is described in the introduction. The existence of
  idempotents in the Burnside ring $A(G)$ shows there is a corresponding decomposition of
  spectra, so for most purposes we may assume $\cKR=\emptyset$.

  (ii) If $\cV$ is almost irreducible then 
  $\cV=\cV_0\vee \cV_1$ (with the wedge point being $\Gp$)
  where $\cV_0=\cK_0^\hash$ (one point compactification) and
  $\cV_1=\cK_1\cup \{\Gp\}$. There is then  a pullback
square
$$\xymatrix{
  \cA (G|\cV)\rto \dto &\cA(G|\cV_0)\dto\\
    \cA (G|\cV_1)\rto  &\cA(G|\Gp)
  }$$
  and similarly for spectra.

  An important example example consists of subroups of $G=N_{U(2)}(T^2)$
  of dimension $\geq 1$ with $\Gp=G$, where $\cK_0$ consists of 1-dimensional
  subgroups containing the central circle, and $\cK_1$ consists of
  1-dimensional subgroups containing all elements $\diag(\lambda,
  \lambda^{-1})$. 
\end{example}

  \subsection{Auxiliary data}
  \label{subsec:auxiliary}
The model for $G$-spectra with geometric isotropy in $\cV$ is a
category whose objects are equivariant sheaves of modules over a
sheaf of rings over $\cV$. In the 1-dimensional case, rather than
making explicit all of the adjectives to make this precise, we will give the
data required directly.  We require  the additional
data of a `sheaf of rings' $\mcR$ and a `component structure' $\cW$ which specifies
the equivariance together with a `coordinate structure' $\cS$ linking
the topology and algebra. 

\begin{defn}
(a) A {\em sheaf of rings} $\mcR$ on $\cV$ consists of commutative
polynomial rings $\mcR(F)$ for
$F\in \cK$ and $\mcR(\Gp)$, and
a ring homomorphisms $\mcR(\Gp)\lra \mcR(F)$ for almost all $F$.

(b) A {\em component
structure} on $\cV$  consists of finite groups $\cW_F$ for $F\in \cK$ and $\cW_{\Gp}$,
together with a homomorphism $\cW_F\lra \cW_{\Gp}$ for almost all $F$.

(c) The  component
structure $\cW$ has an action on $\mcR$ if $\cW_F$ acts on $\mcR(F)$
and $\cW_G$ acts on $\mcR(\Gp)$ so that $\mcR(G)\lra \mcR(F)$ is
$\cW_F$-equivariant. 
\end{defn}

We also need to relate the topology on $\cV$ to the algebra of the
sheaves. For this we need functions whose vanishing determines the topology.

\begin{defn}
  An {\em coordinate structure} on a sheaf of rings om $\cV$ is a collection of
  multiplicatively closed sets $\cS_{\Gp/F}$ in $\mcR(F)$ for $F\in
  \cK_1$. For $F\in
  \cK_0$, we take $\cS_{\Gp/F}=\{ 0, 1\}$, and for $F\in \cKR$ we take
  $\cS_{\Gp/F}=\{0\}$.

If there is a compatible component structure we say $\cS_{\Gp/F}$ is
compatible if the elements of $\cS_{\Gp/F}$ are invariant under the action. 
\end{defn}

We will show in Section \ref{sec:sheafdatatop} below that there is a sheaf of
rings and a compatible component structure arising in the  Lie group
context. 

\section{Equivariant sheaf data in the geometric context}
\label{sec:sheafdatatop}
In this section we explain describe the necessary auxiliary data
$(\mcR, \cW, \cS)$ from  Subsection \ref{subsec:auxiliary} in the motivating example of a
compact Lie group. It is notable that we need to use the fact we are
working over the rationals to give a splitting principle to obtain the
structure in full generality.

\subsection{Sheaves and component structures}
The simplest way to ensure a sheaf of rings with compatible component
structure is to have a       homomorphism $N_G(K)\lra
      N_G(\Gp)$ extending an inclusion $K\subseteq \Gp$.  
In this case it is straightforward since there is an      induced map
      $$W_G(K)=N_G(K)/K\lra N_G(\Gp)/\Gp=W_G(\Gp). $$
      The map on identity components supplies a map $\cO_K\lla \cO_{\Gp}$,
      and it is equivariant for the map $W_K=W_G^d(K)\lra
      W_G^e(\Gp)=W_{\Gp}$.

      We may ensure this is the case for $K\in \cK_0$ since the
      normalizer construction is upper semi-continuous \cite[9.8]{prismatic} so
      that $N_G(K)\subseteq N_G(\Gp)$ for almost all $K\in \cK$. 
It is often reasonable for cotoral inclusions too: for example we might suppose
$G=N_G(\Gp)$ for all $H\in \cV_1$. Often we may be able to reduce to
this case. For example if $\cV_1$ is a singleton $(\Gp)$ we have the 
  familiar question of how to allow for the restriction from $G$ to 
  $N_G(\Gp)$.

  However we want to cover the general case (for example if  $1\in
  \cK$). For this we proceed as   follows.   After conjugation, we may
  suppose $K\subseteq \Gp$ with $\Gp/K$ is a torus. We may then choose a maximal torus of
  $W_G(K)$ containing $\Gp/K$ and  write $\Tt$ for its inverse image in $G$. 
 We see (since the cotoral relation is transitive \cite{ratmack})
 that both $\Gp$ and $K$ are cotoral $\Tt$. The image $\Tt/\Gp$ of $\Tt$
 in $W_G(\Gp)$ is a torus, so we can choose the maximal torus of
 $W_G(\Gp)$ to contain it. In fact $\Tt/\Gp$ is itself a
 maximal torus, since otherwise its inverse image $\Tt_1$ would have
 image $\Tt_1/K$ in $W_G(K)$ properly containing the maximal torus
 $TW_G(K)$. The situation is depicted below.  

 $$\xymatrix{
    &&G\ar@{-}[dr] \ar@{-}[dl]
    && \\
W_G(K) \ar@{-}[d]&N_G(K) \ar@{-}[dr]\lto& & N_G(\Gp) \ar@{-}[dl]\rto&W_G(\Gp) \ar@{-}[d]\\ 
TW_G(K) \ar@{-}[d]&&\Tt\ar@{-}[d]\rrto \llto&& TW_G(\Gp) \ar@{-}[d]\\ 
\Gp /K \ar@{-}[d]&&\Gp\ar@{-}[d]\llto \rrto&&1\\
1&&K \ar@{-}[d]\llto&&&\\
&&1&&
}$$

Now if $g$ is an element of $G$ whose image in $W_G(K)$ normalizes $TW_G(K)$, $g$
normalizes $\Tt$, and hence its image in $W_G(\Gp )$ normalizes $TW_G(\Gp)$.

Writing $\sim$ for  Borel's rational cohomology isomorphism $N_G(T)\sim G$, 
we have a map $\alpha$
$$W_G(\Gp)\sim N_{W_G(\Gp)}(TW_G(\Gp))\stackrel{\alpha}\lla N_{W_G(K)}(TW_G(K))\sim W_G(K).$$ 
$$\alpha^*: H^*(BW_G(\Gp))\lra H^*(BW_G(K))$$
as required. 
\subsection{Localization}
In the geometric context we take
$$\cE_{K}=\{e(W)\st W\in \Rep(N_G(K)), W^K=0\}.$$
$$\cI_{K}=\{1, e_K\}, $$
and on $\cKR$ we take the trivial multiplicative set.

\section{The abelian models}
\label{sec:abelian}
In Subsection \ref{subsec:standard} we describe the standard model
$\cA (\cV, \mcR, \cW, \cS)$ formed from the space $\cV=\cK\cup \{\Gp\}$
and its auxiliary data $(\mcR, \cW, \cS)$. In Subsections \ref{subsec:presep} and
\ref{subsec:adj} we describe a stalkwise construction and
the relation to the standard model. In Subsection
\ref{subsec:twosheaves} we name two cases of somewhat different
characters, based on the dimension of the stalks of the sheaf $\mcR$.
In Subsection \ref{subsec:homalgstandard}
we construct injective
resolutions in the standard model, showing it is of injective
dimension 1. Finally, in Subsection \ref{subsec:examples} we make this
explicit in some familiar examples (some readers may wish to flick
forward to the examples as they read). 

\subsection{The standard model}
\label{subsec:standard}
Given $\cV$, a sheaf of rings, with Euler classes and a compatible component structure we
may describe the standard model $\cA (\cV, \mcR, \cS,  \cW)$.

We take $\cOcK=\prod_{F\in \cK}\mcR(F)$, and we form the  
multiplicatively closed set
$$\cS :=\{ (s_F) \st s_F\in \cS_{\Gp/F}  \mbox{ and } s_F=1 \mbox{ almost
  everywhere }  \} \subseteq \cOcK.$$ 

Now we consider the diagram of rings 
$$\cO^{\cospan}:=\left(
  \begin{gathered}
    \xymatrix{
&\mcR(\Gp) \dto\\
\cOcK\rto &\cSi \cOcK 
}
\end{gathered}
\right)$$

The standard model $\cA=\cA (\fX,\mcR,\cS, \cW)$ consists of $\cO^{\cospan}$-modules 
$$\xymatrix{
&V\dto\\
N\rto &P 
}$$
where (1) the $\mcR(\Gp)$-module $V$ is torsion, (2) (quasicoherence) the 
horizontal induces an isomorphism $\cSi N\cong P$ and (3)
(extendedness) the vertical
induces an isomorphism $\cSi \cOcK \tensor V\cong P$.

We refer to $N$ as the {\em nub} and $V$ as the {\em vertex}, and informally we 
write $N\lra \cSi \cOcK\tensor V$ for the above object.

There are compatible actions of the finite groups. Thus the $F$th 
component of each element 
of $\cS$ is  $\cW_F$ invariant, there is an action 
of $\cW_F$ on $e_FN$, and of $\cW_{\Gp}$ on $V$, and the map $V\lra \cSi N\lra 
\cSi e_FN$ is $\cW_F$-equivariant.


 \subsection{Separating subgroups}
\label{subsec:presep}
 As described in \cite{adelic1} one expects two different models where
the subgroups are considered separately (the `separated model' where
there is still a weak condition on the vertical map, and the `complete
model', where the $\cOcK$-module is complete in a suitable sense but
there is no condition on the vertical map). We begin by describing 
functors for separating and recombining the subgroups. 

  The pre-separated model $\cA_{ps}=\cA_{ps} (\fX,\mcR, \cS, \cW)$ consists of
  a torsion  $\mcR(\Gp)$-module  $V$ with an action of $\cW_{\Gp}$  and $\mcR(F)$-modules $N(F)$ with an
action of $\cW_F$ for each $F$ together with a spreading map 
$$\sigma: V\lra \cSi \prod_FN(F). $$
We require that the map $V\lra \cSi \prod_F N(F)\lra \cSi N(F)$ is
$W_F$-equivariant.

\subsection{Adjunction}
\label{subsec:adj}
There is an adjunction 
$$\adjunction{e}{\cA}{\cA_{ps}}{p}$$
relating the standard and preseparated models. The constructions we
describe are all consistent with the actions of the component
structure, so actions will not be  mentioned explicitly.

If $X=(N\lra \cSi 
\cOcK\tensor V)$ then $eX$ has the same vertex $V$ and the separated
stalk at $F$ is $N(F)=e_FN$ where $e_F$ is the idempotent supported at
$F$. The 
structure map is the composite 
$$V\lra \cSi \cOcK\tensor V=\cSi N\lra 
\cSi \prod_FN(F) . $$
If $Y=(\{N(F) \}_F, V, \sigma)$ then $pY$ has the same vertex $V$ and 
the nub $N$ is defined by the pullback square 
$$\xymatrix{
  N \rto \dto &\cSi \cOcK\tensor V\dto\\
\prod_FN(F) \rto &\cSi \prod_F N(F) 
  }$$
 Since $\Gamma_{\cS}M\simeq \bigoplus_F e_F\Gamma_{\cS}M$,  
the unit gives an isomorphism $X\stackrel{\cong}\lra peX$. 

  On the other hand the counit need not be an isomorphism. For 
  example,  if $Y$ has the property that $N(F)=0$ for all $F\in \cK$, we find 
  $pY$ has nub $N=\cSi \cOcK \tensor V$, and $epY$ has stalk 
  $\cSi \mcR(F)\tensor V$ at $F\in \cK$, which need not be zero.

  \subsection{Two structure sheaves}
  \label{subsec:twosheaves}
Finally we consider two structure sheaves with somewhat different
behaviours. These correspond to the two cases $\cK=\cK_1$ and 
$\cK=\cK_0$. The present account focuses on a particularly simple
choice of sheaf $\mcR$ that covers examples of immediate interest. 

\begin{defn}
The {\em Type 0} structure sheaf has $\mcR(\Gp)=\Q$ and 
 $\mcR(F)=\Q$ for all $F\in \cK$ and $\cS$ 
is the   multiplicatively 
closed set of functions $\cK \lra \{0,1\}$ with finite support and 
$\cK$ is infinite. 

The {\em Type 1} structure sheaf has $\mcR (\Gp)=\Q$ and 
$R(F)=\Q[c]$ for all $F$ and $\cS$ is the multiplicatively 
  closed set of functions $e:\cK \lra \Z_{\geq 0}$ with finite support. 
\end{defn}

The crudest difference between the types  is about whether the standard and separated
models are essentially different. 

\begin{lemma}
For the Type 0 structure sheaf,  the $p-e$ adjunction is an equivalence
of categories. 
\end{lemma}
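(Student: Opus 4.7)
The plan is to prove the counit $\epsilon: epY\to Y$ is an isomorphism, since the unit is already known to be iso in general. Once $\epsilon$ is iso, both adjoints are quasi-inverse equivalences.

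First I would compute $\cSi$ explicitly for the Type 0 sheaf. Every $s\in \cS$ is an idempotent in $\cOcK=\prod_F\Q$, since $s^2=s$; inverting $s$ is the same as killing $(1-s)\cOcK$, which is the finite-support ideal $\prod_{F\in Z_s}\Q$ with $Z_s=\{F: s_F=0\}$. Running over all $s\in\cS$ we obtain
\[
\cSi M \;=\; M\big/\sum_{s\in\cS}(1-s)M
\]
for any $\cOcK$-module $M$. The union of the submodules $(1-s)M$ as $s$ ranges over $\cS$ is the submodule of finite-support elements of $M$. In particular, for each $F\in\cK$ the idempotent $e_F$ lies in the finite-support part, so $e_F\cdot \cSi M = 0$. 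Equivalently, $e_F$ annihilates $\cSi\cOcK\tensor V$ and $\cSi\prod_F N(F)$.

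Next I would apply the exact functor $e_F$ to the defining pullback square for $pY$:
\[
\xymatrix{
N \rto \dto & \cSi\cOcK\tensor V \dto\\
\prod_F N(F) \rto & \cSi\prod_F N(F)
}
\]
The right-hand column becomes $0$ by the computation above, while the bottom-left becomes $N(F)$. Since $e_F$ preserves pullbacks, $e_F N$ is the pullback of $N(F)\to 0\leftarrow 0$, which is just $N(F)$. Tracing the projection, this map $e_F N\to N(F)$ is precisely the $F$-component of the counit. The vertex is preserved by construction, and the spreading map $V\to \cSi\prod_F e_F N \cong \cSi\prod_F N(F)$ agrees with $\sigma$ by the universal property of the pullback. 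So $\epsilon:epY\to Y$ is an isomorphism.

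The routine but necessary final check is that these identifications respect the action of the component structure; this is automatic from the exactness of $e_F$ and the fact that all the constructions in the $e,p$ adjunction are $\cW$-equivariant by definition. I do not expect any real obstacle: the argument is essentially the observation that, for Type 0, the localization $\cSi$ is concentrated entirely at $\Gp$ (it has zero stalk at every $F\in\cK$), so there is no nontrivial mixing between the stalkwise and sheafy descriptions. The only place one might be tempted to worry is whether the tensor product $\cSi\cOcK\tensor V$ behaves as expected, but since $\mcR(\Gp)=\Q$ in Type 0 this is just a $\Q$-vector-space extension and causes no complication.
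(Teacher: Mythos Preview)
Your proof is correct and follows essentially the same approach as the paper. The paper's argument is the one-line observation that $\cSi R(F)=0$, so applying $e_F$ to the defining pullback for $p$ recovers $(epN)(F)=N(F)$; you have simply unpacked this by computing $\cSi$ explicitly via the idempotence of the elements of $\cS$ and then verifying the remaining compatibilities (spreading map, component structure) that the paper leaves implicit.
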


\begin{proof}
The statement about Type 0 structure sheaves follows since $\cSi
R(F)=0$, so that applying $e_F$ to the defining pullback for $p$ we
recover $(epN)(F)=N(F)$.
\end{proof}

To explain the situation with Type 1 structure sheaf we need to recall
the models of the generating basic cells
$$\sigma_G=(\cOcK \lra \cSi \cOcK\tensor \Q) \mbox{ and
}\sigma_F=f_F(\Q)=
(\Q_F \lra 0). $$

\begin{lemma}
The Type 1 structure sheaf the counit of the adjunction is not an 
equivalence, but it is a cellular equivalence in the sense that it
induces an isomorphism of $\Hom
(\sigma_G, \cdot)$ and $\Hom(\sigma_F, \cdot)$ 
\end{lemma}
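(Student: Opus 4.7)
The non-equivalence assertion is immediate from the computation already made at the end of Subsection~\ref{subsec:adj}. Take $Y = (\{0\}_F, V, 0) \in \cA_{ps}$ with $V \neq 0$ and vanishing stalks and spreading. Then the pullback defining the nub of $pY$ degenerates, giving nub $\cSi\cOcK\otimes V$; hence the $F$-stalk of $epY$ is $\cSi\mcR(F)\otimes V = \Q[c,c^{-1}]\otimes V$, which is nonzero, whereas the corresponding stalk of $Y$ is $0$. Thus the counit $\epsilon_Y : epY \to Y$ is not an isomorphism for the Type 1 structure sheaf (by contrast, for Type 0 the element $0 \in \cS_{\Gp/F}$ forces $\cSi\mcR(F)=0$ and this obstruction vanishes, matching the previous lemma).

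For the cellular equivalence, the plan is to exploit the fact that the unit $\eta \colon \mathrm{id}_{\cA} \to pe$ is already known to be an isomorphism. By the triangle identity $(p\epsilon) \circ (\eta p) = \mathrm{id}_p$, we get that for every $Y \in \cA_{ps}$ the map $p(\epsilon_Y) \colon pepY \to pY$ is the inverse of the isomorphism $\eta_{pY} \colon pY \to pepY$, and so is itself an isomorphism. Applying $\Hom_{\cA}(\sigma_*, -)$ and transferring across the adjunction isomorphism
\[ \Hom_{\cA_{ps}}(e\sigma_*, Z) \;\cong\; \Hom_{\cA}(\sigma_*, pZ) \]
for $\sigma_* \in \{\sigma_G, \sigma_F\}$, we conclude that $\Hom(e\sigma_*, \epsilon_Y)$ is an isomorphism. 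This is the required cellular equivalence.

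As a sanity check, one can verify the content of the cellular equivalence by direct computation. A map from $e\sigma_G$ to an object of $\cA_{ps}$ picks out a vertex element together with a compatible choice of stalk elements matching under the spreading, which is exactly the defining pullback data of $p$; and a map from $e\sigma_F$ sees only the $F$-stalk subject to a vanishing condition on its image in $\cSi N(F)$, so the $V$-indexed summand $\cSi\mcR(F) \otimes V$ appearing in $e_F N_{\text{pullback}}$ is irrelevant for such homs.

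The main obstacle is a matter of interpretation rather than computation: since $\sigma_G$ and $\sigma_F$ are defined in $\cA$ while the counit lives in $\cA_{ps}$, one must interpret ``cellular equivalence'' via the images $e\sigma_G, e\sigma_F$ (or equivalently via $p$ of the counit in $\cA$). Once this interpretation is in place, the triangle identity makes the argument essentially formal, with the substantive input being the isomorphism $\Gamma_{\cS}M \cong \bigoplus_F e_F \Gamma_{\cS}M$ already used to establish the unit isomorphism.
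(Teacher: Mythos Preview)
Your proof is correct and takes a genuinely different route from the paper's. The paper argues by direct computation: for $\sigma_F$ it observes that $\Hom(\sigma_F, X)=\Gamma_c e_F N$ depends only on the $F$-stalk and so is unchanged by $p$, and for $\sigma_G$ it identifies $\Hom(\sigma_G,-)$ with an explicit pullback and then compares the inner and outer pullbacks in the stacked diagram defining $pY$. Your argument is instead purely formal: since the unit $\eta$ is an isomorphism, the triangle identity $p\epsilon\circ\eta p=\mathrm{id}_p$ forces $p\epsilon_Y$ to be an isomorphism, and the adjunction then transports this to $\Hom_{\cA_{ps}}(e\sigma_*,\epsilon_Y)$. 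This is slicker and in fact proves more---the counit is inverted by $\Hom_{\cA_{ps}}(eX,-)$ for \emph{every} $X\in\cA$, not just the basic cells---and it isolates the only substantive input as the isomorphism $\Gamma_{\cS}M\cong\bigoplus_F e_F\Gamma_{\cS}M$ used for the unit. The paper's computation, on the other hand, makes the identifications $\Hom(\sigma_F,-)\cong\Gamma_c e_F N$ and $\Hom(\sigma_G,-)\cong$ (pullback) explicit, which is what is actually used later (e.g.\ in the Cellular Skeleton Theorem). Your remark on the interpretation issue---that one must read $\Hom(\sigma_*,-)$ as $\Hom_{\cA_{ps}}(e\sigma_*,-)$ or equivalently work with $p\epsilon$ in $\cA$---is well taken and worth stating.
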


\begin{proof}
With Type 1 structure sheaves, 
the counit is not an isomorphism in the given example of a skyscraper 
sheaf at $G$.

For the cells $\sigma_F$ the cellular equivalence is clear since $\Hom (\sigma_F,
X)=\Gamma_c e_FN$, which does not change under the functor $p$. For
the cell $\sigma_G$ we note that $\Hom (\sigma_G, X )$ is calculated
as  a pullback square:
$$\xymatrix{
  \Hom(\sigma_G, X)\rto \dto &\Hom (\Q , V)\dto \ar@{=}[r]&V\ddto \\
\Hom_{\cOcK}(\cOcK, N)\rto \ar@{=}[d]&\Hom_{\cOcK}(\cOcK, \cSi \cOcK
\tensor V)   \ar@{=}[dr]&\\
  N\rrto&&\cSi \cOcK\tensor V}. $$

In other words the comparison map is the map relating to the inner and
outer pullbacks in the diagram
$$\xymatrix{
  &V\dto \\
  N\rto \dto &\cSi \cOcK\tensor V\dto \\
  \prod_FN(F) \rto &\cSi \prod_FN(F)
}$$
Since the square is a pullback by definition, the map is an
isomorphism. 
\end{proof}
\subsection{Homological algebra of the standard model}
\label{subsec:homalgstandard}
It is an easy exercise to understand this abelian category, but it is  useful to
work it through for reference in higher dimensional contexts. By
Maschke's Theorem the actions of the component groups $\cW_K$ do not
affect the homological dimension. 

\begin{lemma}
\label{lem:dimoneinjdim}  
If the structure sheaf is of Type 0 or Type 1, 
the standard abelian category $\cA$  is of injective dimension 1. 
\end{lemma}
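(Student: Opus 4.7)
The plan is to exhibit, for an arbitrary $X=(N\to \cSi\cOcK\tensor V\leftarrow V)$ in $\cA$, a short injective resolution $0\to X\to I^0\to I^1\to 0$. By the remark about Maschke's theorem we ignore the component group actions throughout, working with the underlying non-equivariant sheaves. It is convenient to pass through the pre-separated model via the adjunction $(e,p)$ of Subsection \ref{subsec:adj}: the functor $e$ is exact, and $p$ preserves injectives (as a right adjoint of an exact functor), so it suffices to understand injectives stalk-by-stalk and at the vertex and then glue with $p$.

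First I would identify two families of building blocks for injectives in $\cA_{ps}$. The \emph{stalk injectives} at $F\in \cK$ are $(\{K_{F'}\}_{F'},0,0)$ where $K_F$ is an injective $\mcR(F)$-module and $K_{F'}=0$ for $F'\neq F$; for Type 0 any $\Q$-vector space $K_F$ qualifies, and for Type 1 one uses the injective hull of an $\mcR(F)=\Q[c]$-module, which since $\cSi K_F$ must vanish on the vertex side forces $K_F$ to be $c$-torsion-divisible (e.g.\ a sum of copies of $\Q[c,c^{-1}]/\Q[c]$). The \emph{vertex injective} associated with an injective torsion $\mcR(\Gp)$-module $J$ is $(\{\cSi\mcR(F)\tensor J\}_F,J,\sigma_{\id})$ where $\sigma_{\id}$ is the tautological spreading; this is injective because maps into it from any $Y=(\{M(F)\},V',\sigma')$ are controlled entirely by the vertex data via injectivity of $J$.

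Second I would construct the embedding. Choose an injective hull $V\hookrightarrow J$ in torsion $\mcR(\Gp)$-modules and, for each $F\in \cK$, an injective hull $e_FN\hookrightarrow K_F$ in $\mcR(F)$-modules (chosen so $\cSi K_F=\cSi\mcR(F)\tensor J$ at the generic side, which is automatic in Type 0 and achievable in Type 1 by taking $K_F$ large enough to contain the image of $\cSi e_FN\hookrightarrow \cSi \mcR(F)\tensor J$). The spreading map $\sigma$ extends to $\tilde\sigma:J\to \cSi\prod_F K_F$ by injectivity of the target, giving an object $\tilde I\in \cA_{ps}$; set $I^0=p(\tilde I)$. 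The induced $X\to I^0$ is a monomorphism because it is injective on the vertex and on each stalk, and these jointly detect monomorphisms (the defining pullback for $p$ preserves kernels).

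Third I would analyse the cokernel $I^1$. At the vertex it is $J/V$, again injective torsion. At the stalk $F$ it is $K_F/e_FN$, which in Type 0 is automatically $\Q$-injective, and in Type 1 is an extension of $c$-torsion-divisible modules by a cokernel of injectives in $\Q[c]$-modules of injective dimension $\leq 1$: here the key point is that, because $K_F$ was chosen as an \emph{injective hull}, the quotient $K_F/e_FN$ is again injective in the appropriate subcategory ($c$-torsion divisible modules form an abelian subcategory closed under quotients of injectives since $\Q[c]$ is a PID). Thus $I^1$ again has injective stalks and an injective vertex, and its spreading is the induced quotient map, so $I^1$ is an object of the same form as $I^0$ and is therefore injective. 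This yields $\injdim X\leq 1$. To see the dimension is exactly $1$, take $X=f_F(\Q)$: a nontrivial self-extension via $\sigma_G$ shows $\Ext^1_\cA(f_F(\Q),\sigma_G)\neq 0$.

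The main obstacle is the Type 1 cokernel step: one must arrange that $K_F/e_FN$ is still an injective $\mcR(F)$-module with the right vanishing of $\cSi$, and that the spreading map descends compatibly. Choosing injective hulls (rather than arbitrary injective containers) at each stalk is what makes the bookkeeping work, since in a PID quotients of injective hulls by submodules remain injective in the $c$-torsion category. Everything else reduces to diagram chases in the pullback square defining $p$.
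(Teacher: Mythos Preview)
There is a genuine gap in your Type~1 argument. You propose to choose, for each $F$, an injective $\Q[c]$-module $K_F$ containing $e_FN$ and satisfying $\cSi K_F=\cSi\mcR(F)\tensor J$. But these two requirements are incompatible in general. Since $\mcR(\Gp)=\Q$ in Type~1 we may take $J=V$, and then the condition becomes $\cSi K_F=\cSi e_FN$, i.e.\ $K_F/e_FN$ is $c$-torsion. Take $X=\sigma_G$, so $e_FN=\Q[c]$. Any $K_F$ with $K_F/\Q[c]$ $c$-torsion lies in $\Q[c,c^{-1}]$, but no nonzero submodule of $\Q[c,c^{-1}]$ is divisible by $(c-a)$ for $a\neq 0$, so no such $K_F$ is injective. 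Thus the embedding you describe does not exist, and the subsequent cokernel analysis (which also slips between injectivity in $\Q[c]$-modules and injectivity in the $c$-torsion subcategory) never gets off the ground.

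The paper sidesteps this entirely by \emph{separating vertex from torsion before} choosing injective embeddings: one first maps $X\to e(V)$, and the kernel $T$ at the nub is then automatically $\cS$-torsion, so $T=\bigoplus_F e_FT$ with each $e_FT$ a torsion $\Q[c]$-module. Only at that stage does one embed into torsion injectives $I_F$. A further point your product-based approach misses is that in Type~1 the paper uses the direct \emph{sum} $\bigoplus_F f_F(I_F)$ and gives a separate argument that this sum is injective in $\cA$; the product would acquire an unwanted vertex contribution. Finally, your lower-bound example $\Ext^1(f_F(\Q),\sigma_G)\neq 0$ is not the one the paper uses and needs justification; in Type~0 in particular the single-stalk objects $f_F(\Q)$ are injective, so a different witness is required there.
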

\begin{remark}
(a) One point of writing this down is to highlight the slightly different 
formal structure of the proof in Type 0 and Type 1. 

(b) In Remark \ref{rem:injdim} below that the argument is easily adapted to give an
estimate of the injective dimension when $\mcR$ is a more general
diagram of polynomial rings, but we take the opportunity to be a bit more precise in the
present case. 
\end{remark}

\begin{proof}
  We may write down enough injectives. Indeed, for any graded
  $\Q$-vector space $W$ with an action of $\cW_{\Gp}$, the object 
  $$e(W)=
  \left(
    \begin{gathered}\xymatrix{
&W\dto\\
\cSi \cOcK \tensor W\rto &\cSi \cOcK\tensor W 
}
\end{gathered}
\right) 
$$
has the property $\Hom (X, e(W))=\Hom (V,W)$. It is therefore 
injective. 

Similarly if $T_F$ is a torsion $R(F)$-module, the object 
  $$f_F(T_F)=
  \left(
    \begin{gathered}\xymatrix{
&0\dto\\
T_F \rto &0 
}
\end{gathered}
\right) 
$$
lies in the standard model and has the property 
$$\Hom (X, f_F(T_F))=\Hom (V(F), T_F).$$
It is therefore injective if $T_F$ is an injective $\Q [c]$-module. 

Now suppose $X=(N\lra \cSi \cOcK \tensor V)$. 
 For an arbitrary object $X$ we may take the map $X\lra e(V)$
 corresponding to the identity on $V$; its kernel is at the nub, where
 it is a module $T$ with $\cSi T=0$. In either Type 0 or Type 1, this
 means that $T=\bigoplus_F e_FT$ (direct sum!). In Type 0, $e_FT =I_F$ is
 already injective, and in Type 1 we choose a resolution $0\lra
 e_FT\lra I_F\lra J_F\lra 0$. In any case   we may construct a monomorphism 
$$\mu: X \lra e(V)\oplus \prod_F f_F(I_F).  $$

It is at this point that the two cases differ. 

In the Type 0 case,  we consider the product $\prod_F f_F(I_F)$: the 
value at $\Gp$ is $\cSi \prod_FI_F$, which is non-zero if infinitely 
many terms $I_F$ are nonzero. The cokernel of $\mu$ is thus $e(W)$ for some 
vector space $W$, and we have found an injective resolution of length 1. We observe 
that not all objects are injective since 
 $\Ext^1(e(\Q), \underline{\Q})=\cSi \prod_F\Q\neq 0$.

 In the Type 1 case,  we may again proceed with the resolution, but
as in Type 0,  $\prod_F f_F(I_F)$ may be non-zero at $\Gp$. We may
instead embed $T$ in the injective $\cE$-torsion module $\bigoplus_F f_F(I_F)$.
This  sum of 
 injectives is also injective, since $\Ext (A, \bigoplus_FI_F)=0$ for 
 all $\cOcK$-modules $A$ which occur as a nub (as in 
 \cite[5.3.1]{s1q}).  This is clear for torsion modules $A$, and it is
 clear for $A=\cOcK$, and follows in general from this.

Accordingly, we have 
found an injective resolution of length 1. It is easy to see that 
the are non-split extensions (for example the short exact sequence of 
torsion $R(F)$-modules $0\lra\Q\lra  \Q[c]^{\vee}
\lra \Sigma^{-2}\Q[c]^{\vee}\lra 0$ gives non-split extensions on 
applying $f_F(\cdot)$ for any $F$). 
  \end{proof}

\begin{remark}
We could have used the same argument for  Type 0 as for Type 1, and we
would still have reached the conclusion that the category is of injective
dimension 1. The point is that to use that argument we need to think
in terms of modules over  the ring $\cOcK$ (rather than sheaves over
$\cK^*$), and the ring is not of injective dimension 0. The given
argument  relying on the fact that stalks are
fields seemed more transparent. 
\end{remark}

It is useful to record the following criterion for injectivity for 
later reference.  
\begin{lemma} 
In the Type 0 case, any object $X=(N\lra \cSi \cOcK\tensor V)$ for
which only finitely many of the modules $T_H=e_H(\Gamma_{\cS}N)$ are
non-zero is injective. 

In the Type 1 case, a module $X=(N\lra \cSi \cOcK\tensor V)$ with the
property that 
$e_FN$ is divisible for all $F\in \cK$ is 
injective. 
\end{lemma}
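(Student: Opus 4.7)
My approach is to exhibit $X$ as a direct sum $X \cong T \oplus e(V)$ of two injective objects already constructed in the proof of Lemma~\ref{lem:dimoneinjdim}, where $T := (\Gamma_\cS N \to 0)$ is the $\cS$-torsion subobject with trivial vertex. Once $T$ is shown to be injective, the inclusion $T \hookrightarrow X$ splits and the quotient $X' := X/T = (N' \to \cSi\cOcK\tensor V)$ has $\cS$-torsion-free nub $N' = N/\Gamma_\cS N$; the remaining task is to identify $X'$ with $e(V)$.

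For injectivity of $T$, one first checks (using that every element of $\cS$ equals $1$ plus something supported at finitely many idempotents $e_F$) that $\Gamma_\cS N = \bigoplus_F T_F$, so $T = \bigoplus_F f_F(T_F)$. In the Type 0 case, only finitely many $T_F = e_F N$ are nonzero by hypothesis, each is injective as a $\Q$-module, and a finite direct sum of injectives is injective. In Type 1, $T_F = \Gamma_{(c)}(e_F N)$ is the $c$-power-torsion in the divisible $\Q[c]$-module $e_F N$; since the $c$-power-torsion of a divisible module over a PID is again divisible, $T_F$ is injective as a $\Q[c]$-module, so $f_F(T_F)$ is injective in $\cA$, and the possibly infinite sum $\bigoplus_F f_F(T_F)$ is injective by the direct $\Ext$ calculation recorded in Lemma~\ref{lem:dimoneinjdim}.

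The identification $X' \cong e(V)$ reduces to showing that $N'$ is already a $\cSi\cOcK$-module, for then the spreading map $N' \to \cSi N' = \cSi\cOcK\tensor V$ is an isomorphism by quasicoherence. In Type 0 this is immediate: every idempotent $e_F$ is itself $\cS$-torsion in $\cOcK$ (killed by the $s \in \cS$ with $s_F = 0$), so the $\cS$-torsion-freeness of $N'$ forces $e_F N' = 0$ for every $F$, and $\cS$ acts invertibly on $N'$ for lack of anything to do. The main obstacle is the corresponding assertion in Type 1: each $e_F N'$, being a $c$-torsion-free quotient of the divisible $\Q[c]$-module $e_F N$, carries the structure of a module over $\cSi R(F) = \Q[c,c^{-1}]$; then for $s = 1 + u \in \cS$ with $u \in \bigoplus_F R(F) e_F$ of finite support $S_0$, the formal inverse $s^{-1} = 1 + v$ with $v \in \bigoplus_F \cSi R(F) e_F$ (same support) defines a well-posed operator on $N'$, because each component $v_F = s_F^{-1} - 1 \in \cSi R(F)$ acts on the $\cSi R(F)$-module $e_F N'$, and the identity $(1+u)(1+v) = 1 = (1+v)(1+u)$ in each $e_F$-component shows this operator is inverse to multiplication by $s$ on $N'$. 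So $N'$ is a $\cSi\cOcK$-module, $X' = e(V)$, and the decomposition $X = T \oplus e(V)$ exhibits $X$ as a direct sum of injectives, completing the proof.
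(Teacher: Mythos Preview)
Your proof is correct and arrives at the same decomposition $X\cong e(V)\oplus \bigoplus_F f_F(T_F)$ that the paper uses. The paper's argument is terser: it simply asserts that the monomorphism $\mu\colon X\to e(V)\oplus\bigoplus_F f_F(I_F)$ from Lemma~\ref{lem:dimoneinjdim} becomes an isomorphism once the hypothesis allows $I_F=T_F$ (so $J_F=0$), whereas you run the argument in the opposite direction by first splitting off the injective subobject $T$ and then identifying the quotient with $e(V)$. Your explicit verification that $N'=N/\Gamma_{\cS}N$ is already an $\cSi\cOcK$-module (via the formal inverse $1+v$ acting through the $\cSi R(F)$-module structure on each $e_FN'$) is exactly the content the paper leaves implicit in claiming $\mu$ is surjective, so the two approaches are really the same argument read from different ends.
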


\begin{proof}
In the Type 0 case, only finitely many of the terms $f_F(T_F)$ are
non-zero, so the product of them has zero vertex. 
 
In the Type 1 case, the map $X\lra e(V)\oplus \bigoplus_Ff_F(I_F)$
constructed in the previous lemma is an isomorphism. The point is that the kernel $T$ is injective and so
we can take $I_F=T_F$ and $J_F=0$. 
  \end{proof}

\begin{remark}
\label{rem:injdim}
Essentially the same argument will show that the algebraic model is of finite 
injective dimension if the rings $\mcR (\Gp)$ and $\mcR(K)$ are 
polynomial rings. Indeed, the full subcategory of objects $e(W)$ is
equivalent to $\mcR (\Gp)$ modules. For an arbitrary object $X$ we
consider the map $X\lra e(W)$ where $W$ is the value of $X$ at $\Gp$
and then obtain two exact sequences
$$0\lra K\lra X\lra \overline{X}\lra 0 \mbox{ and } 0\lra
\overline{X}\lra e(W)\lra C\lra 0$$
where $K=f(K')$ and $C=f(C')$ for torsion $\cOcK$-modules $K',C'$. 

If $\mcR(\Gp)$ is a polynomial ring on $a$ variables and $\mcR(K)$ has
at most $b$ variables for $K\in \cK$ then $\injdim (f(C')), \injdim
(f(K')) \leq b$ and hence $\injdim(\overline{X})\leq \max(a, b+1)$,
and therefore the same bound applies to $\injdim(X)$.
\end{remark}

  \subsection{Examples}
  \label{subsec:examples}

  First,  there are three examples where the answer is well known
  \cite{s1q, o2q}. 

  \begin{example}
(i) The very simplest case comes from the circle group $G=T$. This has
a single block, and we take $\cK=\cC$ to be the set 
of finite cyclic subgroups (in bijection to the positive integers by order of 
subgroup). 

Then $\fX_T=\fX (\cC)$, we take the Type 1 structure sheaf
$\mcR(F)=\Q[c]$ for all $F$, and the 
component structure is trivial ($W_F=W_G=1$).  The set $\cS$ consists 
of Euler classes of representations $V$ with $V^T=0$, so that 
$e(V)(F)=c^{\dim (V^F)}$. 

The model $\cA$ is the standard model $\cA (T)$ for rational $T$-spectra. 

(ii) Next we may take $G=O(2)$ and look at the toral block, again taking $\cK=\cC$. Now the 
compactification again consists of closed subgroups of the circle 
$SO(2)$. (The compactifying point $\Gp$ in this example is $SO(2)$). The rings 
are Type 1 as in the previous example, 
but we take each of the groups $\cW_F$ and $\cW_G$ to be of 
order 2 and the maps $\cW_F\lra \cW_G$ to be isomorphisms. The action of 
$\cW_F$ on $\Q[c]$ takes $c$ to $-c$. 

The model $\cA$ is the model $\cA (O(2)|toral)=\cA (SO(2)][W]$ for the toral 
block of rational $O(2)$-spectra. 

(iii) The model for the toral block of  $G=Pin(2)$ is precisely the 
same as in Part (ii). 
\end{example}

For the other cases we will consider only the full subgroups. 

\begin{example}
(i) For $G=T\times C_2$. This has two blocks. The first consists of 
subgroups of $T$, and the second (considered here) consists of full
subgroups.  We  take $\cK$ to consist of the full 
subgroups of $T\times C_2$ and $\Gp=G$ is the compactifying point. 
The subgroups in $\cK$  are either $C_n \times C_2$ or else 
the cyclic groups $C_{2n}'$ generated by $(e^{2\pi i/2n}, -1)$. 

This example is another Type 1 example,
essentially like $\cA (T)$ except that $\cC$ has been 
replaced by a different countable set.

(ii) For $G=O(2)$ the space of subgroups again divides into two
blocks. The first consists of 
subgroups of $SO(2)$, and the second (considered here) consists of
full subgroups.  We take $\cK=\mcD$ to consist of the conjugacy classes of 
dihedral subgroups and $\Gp=G$. This time the ring is Type 0, with $\mcR(F)=\Q$ for all $F$; 
the groups $\cW_F$ are all of order 2, and $\cW_G$ is trivial. 

The multiplicatively closed set $\cS$ consists of the characteristic 
functions of the cofinite sets of $\mcD$.

The model $\cA$ is the model $\cA (O(2)|\full)$ for the dihedral component of 
rational $O(2)$-spectra.  

(iii) For $G=Pin(2)$ the model is essentially the same as that for
$O(2)$. The toral block is identical to that of $O(2)$ and the
block of full subgroups is like that for except that $\cK=\cQ$ consists of the quaternion subgroups. 
\end{example}

Put together we have the following algebraic models for 1-dimensional groups. 
$$\cA(T\times W)\simeq \cA (T)[W]\times \cA (T)$$
$$\cA(O(2))\simeq \cA (SO(2))[W] \times W\mbox{-Sh}/(\mcD^{\hash})$$
$$\cA(Pin(2))\simeq \cA (Spin(2))[W] \times W\mbox{-Sh}/(\cQ^{\hash})$$
There is an equivalence $\cA (O(2))\simeq \cA (Pin(2))$, but 
note this is given by different bijections on the two components (in 
the non-toral part it is natural to choose the bijection 
$Q_{4a} \leftrightarrow D_{2a}$ quotienting out the central subgroup of order 2, but in the 
toral part we must choose a bijection between cyclic subgroups of 
$SO(2)$ and those of $Spin(2)$, so we cannot use the quotient map).

\begin{example}
(i)  We may consider the block corresponding to full subgroups of the normalizer in the 
maximal torus in $SU(3)$. This consists of $\cW$-sheaves over $\cK^\hash$
where $\cK$ is the discrete space of conjugacy classes of finite 
subgroups and $\cW_F$ is a group of order 3 for all $F$ and $\cW_{\Gp}=1$. 
In this case the full subgroups are $T^2[n]\sdr 
\Sigma_3$, and therefore in bijection with the positive integers (see
\cite[Section 13]{t2wqalg} for more details). 

(ii)  We may consider the block corresponding to full subgroups of 
$T^2\sdr C_3$ (a subgroup of $T^2\sdr \Sigma_3$).  Again
$\cW$-sheaves over  $\cK_3^\hash$
where $\cK_3$ is the discrete space of conjugacy classes of finite 
subgroups and agan $\cW_F$ is of order 3 for all $F$ and $\cW_{\Gp}=1$. 
The set $\cK_3$ is described in \cite[Section 12]{t2wqalg}. 
  \end{example}

\part{Topology}
\section{The abelian models are Quillen models: general strategy}
\label{sec:genstrat}
We will show that the abelian categories provide models in all the
cases we study. The 
structure of the argument is the same as that for tori in 
\cite{tnqcore}: we show that the  sphere spectrum is the pullback of 
rings which are isotropically concentrated and formal in a strong 
sense. 

\subsection{Outline}
The core of the proof is the fact that  the sphere spectrum $S^0$ is a pullback of an
isotropic cube,  using the general 
inductive argument of \cite[8.1]{adelicm}, adapted to the
non-Noetherian setting.

This allows us to outline the proof: we give a symbolic description 
and then explain the notation and discuss the ingredients in the
argument. Details will be given in the rest of the paper.      
      \begin{multline*}
        \Gspectra|\cV 
\stackrel{0}\simeq S^0\module- \Gspectra|\cV 
      \stackrel{1}\simeq   \cell ((S^0)^{\cospan}\module -\Gspectra)\\
    \stackrel{2}  \simeq  \cell ((\widetilde{(S^0)}^{\cospan})^{G}\module-\cW-\spectra) 
    \stackrel{3}  \simeq  \cell (C^{G}_*(\widetilde{(S^0)}^{\cospan})\module 
    -\Q [\cW] \module)\\
    \stackrel{4}  \simeq  \cell (\pi^G_*(\widetilde{(S^0)}^{\cospan})\module -\Q [\cW]\module) 
     \stackrel{5} \simeq DG-\cA (G|\cV) 
   \end{multline*}

Equivalence 0 simply uses the fact that $G$-spectra are 
modules over the sphere spectrum. Equivalence 1 uses the fact 
\cite[4.1]{diagrams} that the category of modules over a homotopy pullback ring is equivalent to the 
cellularization of the category of generalized diagrams over the 
individual modules, together with the fact that the localized sphere 
is the pullback. 

From Equivalence 2 onwards,  we introduce variants on the terms of the initial cube
so as to keep track of  the finite Weyl groups. The cospan
$(\widetilde{S^0})^\cospan$ of $G$-spectra replaces terms of
$(S^0)^\cospan$ by coinductions which vary by subgroup. 
The new objects have  the property that their $G$-fixed point spectra are
products of spectra with homology $H^*(BW_G^e(K))$ for relevant
subgroups $K$ and the category of modules take values in the corresponding product of
categories with $W_G^d(K)$-action. The cellularization will pick out
the appropriate abelian category. 

Equivalence 2 uses the results of  \cite{modfps}. To explain,   for each subgroup $K$
we consider the normalizer $N=N_G(K)$, the Weyl group $W=N_G(K)/K$
with identity component $W^e$ and discrete quotient $W^d$. Finally, we
write $N^f$ for the inverse image of $W^e$ in $N$. With this notation, 
there are equivalences
\begin{multline*}
  DE\lr{K}\module-G-spectra \simeq 
DE\lr{K}\module-N-spectra \simeq \\
DE\lr{K}^{K}\module-W-spectra\simeq
DE\lr{K}^{N^f}\module-W^d-spectra
\end{multline*}
where the first is the forgetful map and the second is passage to $K$
fixed ponts (an equivalence because $DE\lr{K}$ lies over $K$) 
and the third is passage to $N^f$-fixed points under $W^e$ (an
equivalence by the Eilenberg-Moore theorem because $W^e$ is connected).
It requires some care to assemble these equivalences when $\cK$ is
infinite, and we will explain in Section \ref{sec:unif}. 

Equivalence 3 follows from Shipley's Theorem \cite{ShipleyHZ}, and is 
easily adapted to the type of diagram we have. Equivalence 4 is a formality 
statment. Finally, Equivalence 5 
folows from the Cellular Skeleton Theorem, which will 
identify the cellularization of the algebraic category of modules with 
the derived category of an abelian category. 

We will first expla The abelian models are Q models gen star in the argument for individual subgroups and then
discuss how to assemble these for the whole category.

\section{Modules over completions and completions of modules}
\label{sec:DEGp}
In this section we consider the stalks over a single subgroup. We deal
with two particular matters we address. The first is reflected in the title:
the splicing data comes about because we have models for
arbitrary modules over the completed rings, not just complete
modules. The primitive example to bear in mind is that 
there many more modules over $\Zpadic$ 
than there are complete modules (for example $\Zpadic[1/p]$) . Roughly speaking the complete modules are the ingredients but 
modules over the completed ring are used in the splicing. 
The second matter is that we clarify the necessary generators when the
group is not connected in a way that will be important when we allow
infinitely many subgroups.

\subsection{Trivial coisotropy (connected)}
Starting with the simplest case we consider a connected compact Lie
group $G$ and focus on trivial isotropy or coisotropy.
First of all, there is an equivalence
$$\freeGspectra\simeq \cofreeGspectra$$
using the functors $(\cdot )\sm EG_+$ and $F(EG_+, \cdot)$. 
Similarly in algebra
$$\mbox{torsion-$H^*(BG)$-modules}\simeq
\mbox{complete-$H^*(BG)$-modules}$$
using (derived) torsion and derived completion functors
$\Gamma_I(\cdot)$ and $\Lambda_I(\cdot)$. (A model for
torsion $H^*(BG)$-modules is given by DG modules in the abelian
category of torsion modules. In the complete case, the category of
$I$-adically complete modules is not abelian, so one needs to use the abelian
category of $L_0^I$-complete modules \cite{PWcomp}.)

We have equivalences
$$\xymatrix{
\freeGspectra \rto^{\simeq}\dto^{\simeq} &\cofreeGspectra\dto^{\simeq}\\
\torsionHBGmodules \rto^{\simeq}& \completeHBGmodules 
}$$
We will focus on the right hand end (cofree spectra and complete
modules).

One method of proving the vertical equivalences is to observe that
$S^0\lra DEG_+$ is a non-equivariant equivalence, and hence 
$$\cofreeGspectra =L_{EG_+}(S^0\moduleGspectra)\simeq
L_{EG_+}(DEG_+\moduleGspectra). $$
We may then prove $DEG_+$ is formal. 
\begin{prop}
We have equivalences
$$DEG_+\moduleGspectra \simeq DBG_+\modulespectra\simeq
C^*(BG)\modules\simeq \HBGmodules$$
\end{prop}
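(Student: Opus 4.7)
The plan is to chain three equivalences, corresponding to passing from equivariant to non-equivariant spectra, then from spectra to rational chain complexes, and finally from chain complexes to their cohomology via formality.

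For the first equivalence $DEG_+\moduleGspectra \simeq DBG_+\modulespectra$, I would exploit that $G$ acts freely on $EG$ and that $G$ is connected. The $G$-fixed point functor should induce a Quillen equivalence between the two module categories, with the key calculation being $(DEG_+)^G = F(EG_+, S^0)^G = F((EG/G)_+, S^0) = DBG_+$. Conceptually, $DEG_+$ is coinduced from the trivial $G$-spectrum along the projection $G \to 1$, and fixed points provide a fully faithful descent for modules over such coinduced algebras; connectedness of $G$ ensures that no further component data is lost when passing to the quotient $BG$.

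For the second equivalence $DBG_+\modulespectra\simeq C^*(BG)\modules$, I would apply Shipley's theorem (the same tool invoked as Equivalence 3 in the main outline): rationally, $DBG_+$ is canonically an $H\mathbb{Q}$-algebra spectrum, and Shipley's Quillen equivalence between $H\mathbb{Q}$-algebras and rational DGAs transports $DBG_+$ to a DGA computing $H^*(BG;\mathbb{Q})$, i.e.\ a model for $C^*(BG;\mathbb{Q})$; this upgrades to the module-category statement. For the third equivalence $C^*(BG)\modules\simeq \HBGmodules$, since $G$ is connected, $H^*(BG;\mathbb{Q})$ is a polynomial algebra on generators of even degree. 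Such graded algebras are intrinsically formal as DGAs (there is no room for higher Massey products in pure even degrees), so $C^*(BG;\mathbb{Q}) \simeq H^*(BG;\mathbb{Q})$ with zero differential, and the derived module categories agree.

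The main obstacle is the first equivalence. The category $DEG_+\moduleGspectra$ is deliberately larger than cofree $G$-spectra --- as the section's opening analogy between $\Zpadic$-modules and complete $\Zpadic$-modules emphasizes, arbitrary $DEG_+$-modules need not be $EG_+$-local. So the standard free-isotropy equivalence (cofree $G$-spectra $\simeq$ spectra over $BG$) does not apply directly; one needs to check that $G$-fixed points already give an equivalence on \emph{all} $DEG_+$-modules, not just the complete ones. I would handle this either by constructing explicit generators that are preserved by fixed points, or by recognising $DEG_+$ as a suitably coinduced algebra so that the projection formula makes the descent automatic. Once this is in hand, Steps 2 and 3 are comparatively routine applications of tools already in use elsewhere in the paper.
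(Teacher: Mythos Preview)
Your outline is correct and matches the paper's approach. For the first equivalence the paper makes your ``explicit generators'' option precise: writing $R=DEG_+$, the fixed-point functor $R\moduleGspectra \to R^G\modulespectra$ and its left adjoint form a Quillen pair, and this is an equivalence by the Cellularization Principle once one knows that $R$-module $G$-spectra are generated by the single object $R$; this last fact is exactly where connectedness of $G$ is used (the paper cites an external reference for it). Your alternative ``coinduced algebra / projection formula'' route is not the one taken, and would need care since $DEG_+=F(EG_+,S^0)$ is not literally coinduced from the trivial subgroup. Steps 2 and 3 (Shipley's theorem and formality of $C^*(BG)$ for connected $G$) are exactly as you describe and the paper treats them as understood.
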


\begin{proof}
Writing $R=DEG_+$, the first equivalence is in \cite{modfps}, using the fixed point
functor
 $R\moduleGspectra\lra R^G\modulespectra$ and its left adjoint. 
This is an equivalence by the Cellularization Principle
\cite{cellprin} provided $R$-modules are generated by $R$.

This is true when $G$ is connected (there are various proofs, but
one giving this generality is in \cite{bgen}). 
\end{proof}

\begin{thm}
If $G$ is connected, we have a commutative square 
$$\xymatrix{
DEG_+\moduleGspectra\dto^{\simeq}\rto &
\cofreeGspectra\dto^{\simeq}\\
\HBGmodules \rto &\completeHBGmodules 
}$$
\end{thm}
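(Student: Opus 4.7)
The plan is to recognise both horizontal arrows as reflections onto full subcategories and then argue by uniqueness of left adjoints, once we know that the left vertical restricts to the right vertical.

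First I would clarify the horizontals. Cofree $G$-spectra are precisely the $EG_+$-local objects, and because $S^0\to DEG_+$ is a non-equivariant equivalence, any cofree $G$-spectrum $X\simeq F(EG_+,X)$ picks up a canonical $DEG_+$-module structure; conversely every $DEG_+$-module admits a cofree replacement $L_{EG_+}(-)=F(EG_+,-)$. Thus the top map is the Bousfield localisation, left adjoint to the fully faithful inclusion of cofree $G$-spectra into $DEG_+\moduleGspectra$. Similarly, the $L$-complete $H^*(BG)$-modules form a reflective subcategory of $H^*(BG)\module$ with left adjoint the derived completion $\Lambda_I=L_0^I(-)$ at the augmentation ideal $I$. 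So both horizontals are localisation functors onto reflective subcategories.

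Next I would verify that the left vertical equivalence $\Phi:DEG_+\moduleGspectra\lraequiv \HBGmodules$ of the preceding proposition restricts to the right vertical equivalence on the cofree/complete subcategories. On the distinguished compact generator we have $\Phi(DEG_+)=H^*(BG)$, which is $I$-adically complete, and $DEG_+$ is cofree by definition. Both equivalences (top row and bottom row) are in fact constructed by the same machinery — passage to $G$-fixed points followed by Shipley's theorem and formality of $C^*(BG)$ — so the right vertical is (naturally isomorphic to) the restriction of $\Phi$ to cofree objects. Conversely, since $\Phi$ is an equivalence of stable categories it carries the reflective subcategory of cofree $G$-spectra onto a reflective subcategory of $\HBGmodules$, and by comparing this with the right vertical we identify the image as exactly the $L$-complete modules.

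Given this identification, the square commutes by the uniqueness of left adjoints to the inclusion of a reflective subcategory: $\Lambda_I\circ \Phi$ and $\Phi\circ L_{EG_+}$ are both left adjoint to the composite inclusion $\completeHBGmodules\hookrightarrow \HBGmodules \simeq DEG_+\moduleGspectra$ (equivalently, $\text{cofree-}G\text{-spectra}\hookrightarrow DEG_+\moduleGspectra\simeq \HBGmodules$), so they are naturally isomorphic.

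The main obstacle is the identification in the second paragraph: that the $\Phi$-image of a cofree spectrum is genuinely $L_0^I$-complete. This is essentially the Greenlees--May local duality / completion theorem for $DEG_+$-module spectra, which says $\pi^G_*(F(EG_+,M))\cong \Lambda_I\pi^G_*(M)$. For $M=DEG_+$ this is classical (both sides are $H^*(BG)$), and the general case follows by the Cellularization Principle, since $DEG_+$ generates $\cofreeGspectra$ and the completion functor preserves the relevant (filtered) colimits in the $L$-complete abelian category.
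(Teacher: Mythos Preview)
Your argument is sound and lands at the same conclusion, but the route differs from the paper's. The paper does not invoke uniqueness of adjoints or an abstract identification of the image of $\Phi$; instead it simply writes down a $3\times 3$ grid of equivalences factoring through the intermediate category of $DBG_+$-module spectra:
\[
\xymatrix{
DEG_+\moduleGspectra\dto^{\simeq}\rto
&L_{EG_+}(DEG_+\moduleGspectra)\dto^{\simeq}\rto &
\cofreeGspectra\dto^{\simeq}\\
DBG_+\modulespectra\dto^{\simeq}\rto &L_{S^0}(DBG_+\modulespectra)\dto^{\simeq}\rto &
(\cofreeGspectra)^G\dto^{\simeq}\\
\HBGmodules \rto &L_{\Q}\HBGmodules \rto &
\completeHBGmodules
}
\]
The point is that the equivalence $DEG_+\moduleGspectra\simeq DBG_+\modulespectra\simeq \HBGmodules$ from the preceding proposition carries the localizing object $EG_+$ to $S^0$ to $\Q$, so the three Bousfield localizations match up column by column; the identification $L_{\Q}\HBGmodules\simeq\completeHBGmodules$ in the bottom right is then exactly the Greenlees--May input you isolate in your final paragraph. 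Your approach packages the same content more abstractly (reflective subcategories and uniqueness of adjoints), which is perfectly valid; the paper's version is more concrete in that it tracks the localizing object explicitly through the chain of Quillen equivalences, so one never has to argue separately that $\Phi$ lands in the complete modules --- it falls out of the identification $L_{EG_+}\leftrightarrow L_{\Q}$.
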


\begin{proof}
In summary, we have the equivalences 
$$\xymatrix{
DEG_+\moduleGspectra\dto^{\simeq}\rto 
&L_{EG_+}(DEG_+\moduleGspectra)\dto^{\simeq}\rto &
\cofreeGspectra\dto^{\simeq}\\
DBG_+\moduleGspectra\dto^{\simeq}\rto &L_{S^0}(DBG_+\moduleGspectra)\dto^{\simeq}\rto &
(\cofreeGspectra)^G\dto^{\simeq}\\
\HBGmodules \rto &L_{\Q}\HBGmodules \rto &
\completeHBGmodules 
}$$
\end{proof}

\subsection{Trivial coisotropy (disconnected)}

We explain how to cover the case of disconnected groups.

\begin{thm}
For an arbitrary group $G$ we have a commutative square 
$$\xymatrix{
DEG_+\moduleGspectra\dto^{\simeq}\rto &
\cofreeGspectra\dto^{\simeq}\\
H^*(BG_e)[G_d]\modules \rto &\mbox{complete-}H^*(BG_e)[G_d]\modules 
}$$
\end{thm}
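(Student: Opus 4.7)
The plan is to reduce to the connected case already established for $G_e$, tracking the residual action of the component group $G_d = G/G_e$. The strategy is to pass to $G_e$-fixed points (rather than full $G$-fixed points), so that the Eilenberg--Moore argument used for connected groups remains available, and then to account for $G_d$ via a skew group construction.

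First, I would establish the disconnected analogue of Equivalence 2 from Section \ref{sec:genstrat}. Since $DEG_+$ is cofree for the action of the connected group $G_e$, the method of \cite{modfps} adapts to give an equivalence
$$DEG_+\moduleGspectra \simeq (DEG_+)^{G_e}\mbox{-mod-$G_d$-spectra},$$
obtained by combining passage to $G_e$-fixed points (an equivalence because $G_e$ is connected and we are working with $DEG_+$-modules, so the Eilenberg--Moore spectral sequence collapses) with the residual $G_d$-equivariance. The generation hypothesis needed for the Cellularization Principle reduces, after restricting to $G_e$, to the connected statement of \cite{bgen}.

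Next, I would identify $(DEG_+)^{G_e} \simeq DBG_{e+}$ as a $G_d$-spectrum, where the free $G_d$-action on $BG_e = EG/G_e$ is the residual action coming from the free $G$-action on $EG$. Applying the (rational, $G_d$-equivariant) formality of $BG_e$ together with Shipley's theorem then yields
$$DBG_{e+}\mbox{-mod-$G_d$-spectra} \simeq H^*(BG_e)\mbox{-mod-$G_d$-spectra} \simeq H^*(BG_e)[G_d]\modules,$$
where the final identification rewrites a $G_d$-equivariant $H^*(BG_e)$-module as a module over the skew group ring $H^*(BG_e)[G_d]$, with $G_d$ acting on $H^*(BG_e)$ by conjugation. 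This furnishes the left vertical equivalence of the square.

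The right vertical is obtained by repeating the argument after localization: cofree $G$-spectra correspond to complete $H^*(BG_e)[G_d]$-modules exactly as in the connected case, since localization at $EG_+$ commutes with all the functors considered. The top horizontal is the inclusion of $DEG_+$-module $G$-spectra into cofree $G$-spectra, which is an equivalence because modules over the cofree ring $DEG_+$ are automatically cofree; the bottom horizontal is $L_0^I$-completion; commutativity of the square is then automatic by naturality. The main technical hurdle is ensuring that every step, and in particular the formality of $BG_e$, can be carried out $G_d$-equivariantly. Over the rationals this is essentially automatic because Maschke's theorem renders $G_d$-representations semisimple and allows obstructions to be averaged away, but arranging the equivalences in a fully model-categorical fashion compatible with the $G_d$-action requires some care, and is the place where most of the bookkeeping work will go.
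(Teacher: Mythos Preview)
Your approach is broadly the same as the paper's: reduce to the connected case for $G_e$ and track the residual $G_d$-action. The paper organises the key reduction slightly differently. Its main step is the observation that the Burnside-ring idempotent $e \in A(G_d)$ supported at the trivial subgroup satisfies $DEG_+ \simeq e \cdot DEG_+$ (since $S^0 \to eS^0$ is a nonequivariant equivalence and $DEG_+$ is cofree), so that $DEG_+$-module $G$-spectra are generated by $G_{d+} \sm DEG_+$ alone rather than by all $G_d/F_+ \sm DEG_+$. This is exactly what is needed to pass from \emph{genuine} $G_d$-equivariant modules to modules over the skew group ring. You have essentially the same ingredient in disguise---the freeness of the $G_d$-action on $EG/G_e$---but you never actually deploy it to justify the step from $DBG_{e+}\mbox{-mod-}G_d\mbox{-spectra}$ to $H^*(BG_e)[G_d]\mbox{-modules}$, which fails if the left side is read as genuine $G_d$-spectra. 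That identification is not a ``rewriting''; it is where the idempotent (equivalently, your freeness observation) must be invoked.

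There is also a genuine error in your description of the square: the top horizontal is \emph{not} an equivalence. It is the localization $DEG_+\mbox{-mod} \to L_{EG_+}(DEG_+\mbox{-mod}) \simeq \cofreeGspectra$, matching completion on the bottom row. Modules over a cofree ring need not themselves be cofree: for $G = T$ the $DET_+$-module $DET_+[c^{-1}]$ corresponds to $\Q[c,c^{-1}]$, which is not $L_0$-complete. Indeed the whole point of the square is that the left column records the uncompleted picture and the right column the completed one; if the top horizontal were an equivalence the bottom would have to be as well.
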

\begin{proof}
From the Eilenberg-Moore spectral sequence $DEG_+$-modules are
generated by $G^d/F_+$-modules where $F$ is a finite subgroup of
$G^d$. The map of $G_d$-spectra $S^0\lra eS^0$ is a non-equivariant
equivalence where $e\in A(G_d)$ is the idempotent supported at 1. Thus
$S^0\sm DEG_+\lra eS^0\sm DEG_+$ is an equivalence. The result follows
from the fact that free $G^d$-spectra are generated by $G^d_+$.
  \end{proof}

  \section{Treating infinitely many subgroups at once}
\label{sec:unif}
  In the previous section, we argued that $DEG_+$ is strongly isotropically formal
  in the sense that we could take fixed points for a single subgroup
  and obtain a formal ring spectrum. It is rather easy to adapt this
  to $DE\lr{H}$ for any single subgroup $H$, by the method of
  \cite{specgq}. We recall this argument below, but the main point of
  the present section is  to explain how to deal with infinite
  products of such spectra.

 In the previous
  section we explained how to deal with a single subgroup, which we
  took to be the trivial group for convenience. The argument involved
  a subgroup $K$ and some associated subgroups. Principally this means
  its normalizer $N=N_G(K)$, and its Weyl group $W=N/K$, but we also
  need to mention the identity component $W^e$ of $W$, and its
  discrete quotient $W^d=W/W^e$, and finally $N^f$, which is the
  inverse image of $W^e$ in $N$, so that $N/N^f\cong W/W^e=W^d$.

  The argument is that (for suitable ring spectra $R$) we have
  equivalences
  $$R\modules-G-\spectra\stackrel{(1)}
  \simeq R\modules-N-\spectra\stackrel{(2)}\simeq R^K\modules -W-\spectra
  \stackrel{(3)}\simeq R^{N^f}\modules-W^d-\spectra$$
Equivalence (1) is the forgetful map from $G$-spectra to
$N$-spectra, and relies on fusion of
$N$-conjugacy classes being favourable.

Equivalence (2) is passage to $K$-fixed points, and relies on  $R$ having
geometric isotropy consisting of subgroups containing $K$. Equivalence (3) is passage to
categorical fixed points, and uses the Eilenberg-Moore theorem for the
{\em connected} group $W^e$.

We now wish to treat many subgroups $K$ at once, but the intermediate
categories and functors in the above argument involve $N=N_G(K)$ and
therefore depend on $K$ . We explain here that we may instead
factorize the composite so that the dependency on $K$ occurs in the
ring and whilst the categories and functors are independent of $K$.

The subtlety  is that both $N$ and $N^f$ play a role. In our examples
$R\simeq F_{N}(G_+, R)$ but $R\not\simeq F_{N^f}(G_+,R)$ (consider
$K=D_{2n}$ insider $G=O(2)$ (or even inside $G=D_{4n}$)). Thus we need
to use $N$. On the other hand, taking $N$-fixed points loses the
action of the $W^d$. The solution is to reinsert the action of $W^d$
as an endomorphism of the functor $F_{N^f}(G_+, \cdot)$.

\begin{lemma}
\label{lem:uniform} 
 The diagram
  $$\xymatrix{
    R\modules-G-\spectra \rto
    \dto^{\res^G_N}&F_{N}(G_+,R)\modules-G-\spectra 
    \rto&F_{N^f}(G_+,R)\modules-G-\spectra [W^d] \ddto^{\Psi^G}\\
    R\modules-N-\spectra \dto_{\Psi^K}\drto^{\Psi^{N^f}}&&\\
    R^K\modules-W-\spectra \rto_{\Psi^{W^e}}
    &R^{N^f}\modules-W^d-\spectra\rto^{\simeq}&
    R^{N^f}\modules-\spectra [W^d]
    }. $$
  commutes where the top horizontal is $F_{N^f}(G_+, \bullet)$.  The
  composite is a right Quillen functor. 
  \end{lemma}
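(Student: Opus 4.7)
The plan is to verify the diagram commutes by chasing a module $M$ around both routes and identifying the two outputs as $R^{N^f}$-modules in spectra carrying a $W^d$-action, and then to observe that each arrow is individually right Quillen, so the composite is too.

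First I follow $M \in R\modules-G-\spectra$ clockwise. The top composite is $M \mapsto F_{N^f}(G_+, M)$, which becomes a module over $F_{N^f}(G_+, R)$ in $G$-spectra and carries a residual $W^d = N/N^f$-action, manufactured from the fact that $N^f$ is normal in $N$, so the cosets of $N^f$ in $N$ permute the coinduction. Applying $\Psi^G$ and invoking the $(\res^G_{N^f}, F_{N^f}(G_+, -))$-adjunction identifies the output with $M^{N^f}$ as a spectrum with $W^d$-action; the $G$-fixed points of the coefficient ring $F_{N^f}(G_+, R)$ are $R^{N^f}$.

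Counterclockwise, $\res^G_N$ remembers the full $N$-action on $M$, $\Psi^K$ yields the $R^K$-module $M^K$ as a $W$-spectrum, and $\Psi^{W^e}$ yields $(M^K)^{W^e}$. Because $K$ is normal in $N^f$ with quotient $W^e$, transitivity of fixed points gives $(M^K)^{W^e} \iso M^{N^f}$, and the residual $W/W^e = W^d$-action matches the one from the clockwise route because both realise the natural action of $N/N^f$ on $M^{N^f}$ inherited from the $G$-action. The equivalence $R^{N^f}\modules-W^d-\spectra \simeq R^{N^f}\modules-\spectra\,[W^d]$ at the bottom-right is the standard identification of $W^d$-spectra (for the finite discrete group $W^d$) with spectra carrying a $W^d$-action.

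For the right Quillen claim, each individual arrow is right Quillen: $\res^G_N$ is right adjoint to induction (and left adjoint as well, by Wirthm\"uller), the coinductions $F_N(G_+, -)$ and $F_{N^f}(G_+, -)$ are right adjoints of the corresponding restrictions, the change-of-rings functors along $R \to F_N(G_+, R) \to F_{N^f}(G_+, R)$ are restriction of scalars, and the categorical fixed-point functors $\Psi^K$, $\Psi^{W^e}$, $\Psi^G$ are right adjoints of inflation along the relevant normal subgroups; a composite of right Quillen functors is right Quillen. The main obstacle I expect is not the underlying-spectrum commutativity, which is formal from the adjunctions and transitivity of fixed points, but the careful bookkeeping of $W^d$-equivariance: verifying that the $W^d$-action produced by coinducing from $N^f$ agrees naturally with the one arising as $W/W^e$ on the iterated fixed points $(M^K)^{W^e}$. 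Both are ultimately the action of $N/N^f$ on $M^{N^f}$, but establishing naturality in $M$ is the step requiring the most care.
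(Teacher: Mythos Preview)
Your proposal is correct and follows essentially the same approach as the paper. The paper's own proof is extremely terse: it records only the key adjunction identity $F_{N^f}(G_+, X)^G \simeq X^{N^f}$ for commutativity and the one-line observation that every functor in the diagram is a right adjoint; your version spells out the transitivity $(M^K)^{W^e}\cong M^{N^f}$ and the matching of the two $W^d$-actions, which the paper leaves implicit.
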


  \begin{proof}
 The commutativity is just the formula
 $$F_{N^f}(G_+, X)^G\simeq X^{N^f}. $$
All functors in the diagram are  right adjoints. 
    \end{proof}

The maps in the diagram are equivalences under various circumstances. 
  
    \begin{lemma}
      (i) The restriction functor
      $$R\moduleGspectra \lr{\cV} \lra
      R\moduleNspectra\lr{\cV}$$
      is an equivalence if, for all $K\in \cV
\cap \supp (R)$ a containment ${}^gK\subseteq N$ implies $g\in N$, so that
$(G/N)^K$ is a singleton.

(ii) The map
$$F_{N}(G_+,R)\modules-G-\spectra \lra
F_{N^f}(G_+,R)\modules-G-\spectra [W^d] $$
is an equivalence if $R$ is $N^f$-free.

(iii) The map
$$\Psi^K: R\modules-N-\spectra \lr{\cV} \lra R^K\modules-W-\spectra \lr{\cV}$$
is an equivalence if every element of $\supp(R) \cap \cV$ contains $K$.

(iv) The map 
$$\Psi^{W^e}: R^K\modules-W-\spectra \lr{\cV} \lra 
R^{N^f}\modules-W^d-\spectra \lr{\cV}$$
is an equivalence if $R^{N^f}$ is $W^e$-free and 
$W^d_+\sm R$ generates $R^K\modules-W-\spectra$. 
\end{lemma}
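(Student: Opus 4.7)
The overall plan is to treat each of the four claims by invoking the Cellularization Principle of \cite{cellprin}: each displayed functor is a right Quillen adjoint (restriction, a function spectrum, or a fixed-point functor), so it suffices to verify that the derived unit or counit is an equivalence on a set of compact generators of the relevant $\cV$-cellularized source. The role of the cellularization at $\cV$ is precisely to restrict attention to cells whose geometric isotropy lies in $\cV$, so that the stated support and fusion hypotheses apply on generators.

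For part (i), the fusion condition is the key. The functor $\res^G_N$ has left adjoint $G_+\sm_N(-)$, and on a cell $R\sm G/H_+$ with $H$ containing some $K\in\cV\cap\supp(R)$, the counit $G_+\sm_N\res^G_N X\to X$ is controlled on $K$-geometric fixed points by the orbit $(G/N)^K$; the singleton hypothesis forces it to be an equivalence on generators, and the Cellularization Principle upgrades this to an equivalence of module categories. Part (iii) is a direct application of the fixed-point equivalence of \cite{modfps}: since every element of $\supp(R)\cap\cV$ contains $K$, the compact generators of $R\moduleNspectra\lr{\cV}$ are of the form $R\sm N/H_+$ with $K\subseteq H$, and $\Psi^K$ sends these to compact generators of $R^K$-modules in $W$-spectra with unit an equivalence.

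Parts (ii) and (iv) involve an additional layer of equivariance. Part (iv) is a version of the Eilenberg--Moore theorem for the \emph{connected} group $W^e$: categorical $W^e$-fixed points is an equivalence from $R^K$-modules in $W$-spectra to $R^{N^f}$-modules in $W^d$-spectra provided both (a) $R^{N^f}$ is $W^e$-free, so Eilenberg--Moore applies without completion issues, and (b) $W^d_+\sm R$ generates the source, so the comparison is detected on cells; both are in the hypothesis. Part (ii) identifies $F_N(G_+,R)$-modules in $G$-spectra with $W^d$-equivariant $F_{N^f}(G_+,R)$-modules: the key point is that $F_N(G_+,R)\simeq F_{N^f}(G_+,R)^{W^d}$ under the residual $W^d=N/N^f$ action, and the $N^f$-freeness of $R$ ensures this action is free enough that descent along $W^d$ induces an equivalence of module categories.

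The main obstacle is the compatibility of actions in part (ii): one must verify that the residual $W^d$-action introduced by passing from $F_N(G_+,R)$ to $F_{N^f}(G_+,R)$ agrees with the $W^d$-action appearing on the right of the commuting diagram, and coincides with the action produced via (iv). This is a bookkeeping exercise best carried out at the point-set level on coinductions, rather than on homotopy categories; the remaining parts are either fusion-theoretic (i), a direct citation (iii), or a classical Eilenberg--Moore computation for the connected group $W^e$ (iv).
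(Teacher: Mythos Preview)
Your proposal is correct and matches the paper's approach: part (i) via the counit $G_+\sm_N M\to M$ being an equivalence under the fusion hypothesis, part (iii) by direct citation of \cite{modfps}, and part (iv) via the Eilenberg--Moore argument of \cite{gfreeq2}. The only cosmetic difference is in part (ii), where the paper phrases the equivalence as Morita theory for the finite group $W^d$ (free $W^d$-spectra being generated by $W^d_+$) rather than as descent along $F_{N^f}(G_+,R)^{W^d}\simeq F_N(G_+,R)$; these are the same fact, and your identification of the $W^d$-action compatibility as the only genuine bookkeeping is on point.
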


\begin{proof}
Part (i) follows since $G/N_+\sm M\simeq G_+\sm_N M\lra M$ is an 
equivalence for all $R$-modules $M$. 

Part (ii) is the fact that free $W^d$-spectra are generated by $W^d_+$
and hence  by Morita theory equivalent to spectra with a $W^d$-action.

Part (iii) is  \cite[Theorem 7.1]{modfps}. 

Part (iv) follows from \cite{gfreeq2}. 
  \end{proof}



\section{The abelian models are Quillen models in dimension 1}
\label{sec:dimonemodels}
In this section we prove a general theorem. Amongst the small cases
covered are the following familiar examples  
\begin{itemize}
\item $G=T$, the circle group 
\item $G=O(2)$, toral subgroups $\cV^{O(2)}_1$
\item $G=O(2)$, full subgroups $\cV^{O(2)}_W$
  \item $G=Pin(2)$, toral subgroups $\cV^{Pin(2)}_1$
  \item $G=Pin(2)$, full subgroups $\cV^{Pin(2)}_W$
    \item $G=T\times C_2$, toral subgroups $\cV^{T\times C_2}_1$
\item $G=T\times C_2$, full subgroups $\cV^{T\times C_2}_W$
\item $G$ of rank 2 and $W=C_3$ or $\Sigma_3$ and $\Lambda_0$
  non-trivial. 
  \end{itemize}
We saw in Lemma \ref{lem:dimoneinjdim} that $\cA (G|\cV)$ is of finite
injective  dimension in these cases and hence $DG-\cA (G|\cV)$ admits 
the injective model structure with homology isomorphisms as weak 
equivalences. 

\begin{thm}
  Suppose $G$ is a compact Lie group and  $\cV$ is obtained from a set $\cK$ of conjugacy classes of finite
  subgroups of $G$ by adjoining the conjugacy class of a subgroup
  $\Gp$ with finite Weyl group. 
  Then there is a  Quillen equivalence 
  $$\Gspectra|\cV\simeq DG-\cA (G|\cV). $$
 where 
 $$\cA (G|\cV)\simeq \cA (\cK, \mcR, \cS, \cW), $$
 where the data is as in Section \ref{sec:sheafdatatop}
     \end{thm}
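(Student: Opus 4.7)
The plan is to implement the six-step chain of Quillen equivalences sketched in Section \ref{sec:genstrat}, using the decomposition $\cV = \cK \amalg \{\Gp\}$ that gives a height-1 stratification. First I would construct an isotropic cospan (pullback diagram) of commutative ring $G$-spectra
$$(S^0)^{\cospan} = \Bigl( D E\lr{\cK}_+ \longrightarrow \cEi D E\lr{\cK}_+ \longleftarrow D E\lr{\Gp}_+ \Bigr),$$
where $\cE$ inverts Euler classes of representations whose $\Gp$-fixed points are trivial, and show that the localized sphere spectrum is recovered as its homotopy pullback. Because $\cV$ consists only of $\cK$ and $\Gp$, this is a cube of minimal depth, and the pullback can be established inductively as in \cite[8.1]{adelicm} adapted to the non-Noetherian setting (we must be careful since $\cK$ may be infinite). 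Equivalence (1) of Section \ref{sec:genstrat} then follows from \cite[4.1]{diagrams}: modules over a homotopy pullback ring are the cellularization of generalized diagrams of modules over the vertices.

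Next I would analyse each vertex of the cube separately. For the $\Gp$-vertex, since $\Gp$ has finite Weyl group, the relevant ring spectrum is $DE(\Gp)_+$ and Section \ref{sec:DEGp} already identifies its category of modules with $H^*(BW_G^e(\Gp))[W_G^d(\Gp)]$-modules (not just complete ones). For the $\cK$-vertex, I would apply the uniform framework of Section \ref{sec:unif}: Lemma \ref{lem:uniform} gives a right Quillen functor to $\prod_{F \in \cK} R_F\modules$-spectra$[W_G^d(F)]$ where $R_F$ is the $N_F$-fixed point spectrum of $DE\lr{F}_+$. The fact that each $F \in \cK$ is finite makes the relevant $R_F$ a product of Eilenberg--MacLane spectra with homotopy $H^*(BW_G^e(F))$, and the hypothesis that $\cK$ is of h-discrete Thomason height 0 allows the generators to be chosen uniformly (see \cite{gfreeq2}). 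The localization vertex is dealt with by inverting the multiplicatively closed set $\cS$ constructed from Euler classes.

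At this point, Shipley's theorem \cite{ShipleyHZ} transports the whole diagram into a diagram of $DG$-categories (Equivalence 3), and formality of each stalk (Equivalence 4) is verified using the polynomial nature of the rings $H^*(BW_G^e(H))$ and the splitting principle available after rationalization (this is where working over $\Q$ is essential, as emphasized in Section \ref{sec:sheafdatatop}). The upshot is a cellularized cospan of DG-categories whose vertex data matches precisely the cospan $\cO^\cospan$ defining $\cA(\cV, \mcR, \cS, \cW)$. Finally, a Cellular Skeleton Theorem identifies the cellularization with $DG$-$\cA(G|\cV)$, using the injective dimension 1 bound from Lemma \ref{lem:dimoneinjdim} so that the homology-isomorphism model structure is available on $DG$-$\cA$.

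The main obstacle is assembling the stalks coherently over an infinite set $\cK$ while simultaneously carrying out the shift from ``complete modules at each $F$'' (the natural output of fixed-point arguments) to ``arbitrary modules over the completed rings'' (as required by the splicing map $\cOcK \to \cSi \cOcK \tensor V$). This is exactly the tension highlighted in Section \ref{sec:DEGp}, and it is aggravated in Type 1 cases where the $p$-$e$ adjunction of Subsection \ref{subsec:adj} is \emph{not} an equivalence and one must work with the standard model rather than the preseparated one. Handling this requires showing that the category of modules over $\prod_F R_F$ produced by Lemma \ref{lem:uniform} agrees (after cellularization with respect to the generators coming from the pullback description of the sphere) with the pullback category $\cA$, and not merely with its preseparated version $\cA_{ps}$. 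The verification that the cellularization sees exactly the pullback rather than the product of stalks is the most delicate point, and is where the hypothesis on $\Gp$ having finite Weyl group enters: it forces each stalk at $\Gp$ to sit in a category that is tractable enough for the Cellular Skeleton Theorem to apply uniformly.
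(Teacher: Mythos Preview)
Your overall strategy matches the paper's: build an isotropic pullback square for $S^0$ in the $\cV$-local category, pass to fixed points via the uniformisation of Section~\ref{sec:unif}, apply Shipley's theorem, establish formality, and finish with a Cellular Skeleton argument using the injective-dimension bound. Two points need sharpening before this sketch becomes a proof.

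First, the ring at the $\Gp$-vertex is $E\lr{\Gp}$ (the $\cV$-localisation of $\tilde{E}\cF$ for $\cF$ the family of finite subgroups), not $DE\lr{\Gp}_+$; the finite Weyl group hypothesis enters exactly here, to give $\pi^G_*(E\lr{\Gp})=\Q$ without further localisation. More substantively, the multiplicatively closed set you invert to reach the pushout corner is \emph{not} built from Euler classes alone. The paper shows $\pi^G_*(E\lr{\Gp}\sm DE\lr{\cK})=\cSi\cOcK$ by splitting $\cK=\cK_1\amalg\cK_0\amalg\cKR$: on $\cK_1$ one inverts Euler classes $\cE$, on $\cK_0$ one inverts idempotents $\cI=\{i(U)\}$ supported on $\cV$-closed subsets, and on $\cKR$ one inverts $0$. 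Your description via $\cE$ handles only the Type~1 case $\cK=\cK_1$; the Type~0 examples (e.g.\ the dihedral block of $O(2)$) require the idempotent part, and the argument that $E\lr{\Gp}\sm E\lr{\cK_0}\simeq\colim_U e_U E\lr{\cK_0}$ is what makes it work there.

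Second, ``formality of each stalk'' is insufficient for Equivalence~(4): you need the entire \emph{cospan} $\cO_{\Gp}\to\cSi\cOcK\leftarrow\cOcK$ to be intrinsically formal as a diagram of DGAs with $\cW$-action, so that any realisation with the correct homology is equivalent \emph{as a diagram} to the formal one. The paper proves this separately (Lemma~\ref{lem:formalityone}) by first using Maschke's theorem to make $\cO_{\Gp}'$ standard, then constructing a zigzag through $\hat{\cS}^{-1}\cO'$ (adjoining cycle representatives of $\cS$) to align both structure maps simultaneously. Without this diagram-level statement the passage from DGA-modules to $\pi^G_*$-modules does not go through.
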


Since we know how to deal with 0-dimensional summands, we suppose
$\cK\mcR=\emptyset $. The first step is to express the category as a pullback,
which we do by giving a pullback square of ring spectra. We will then
calculate the homotopy of the ring spectra, and their strong isotropic
formality. Finally we will prove the algebraic category has the model
as a cellular skeleton.

\subsection{The pullback square}
Broadly speaking there are two ways of presenting the argument. (1)  Giving
a pullback square of rings in $G$-spectra  and then localizing to give
a pullback square in $G$-spectra with geometric isotropy $\cV$. (2)
Constructing the local objects directly and showing they form a
pullback square. We will adopt the first option, since we quickly get
a pullback square and stay with familiar objects as long as
possible. When the filtration is more complicated, one can imagine
that (2) may be more attractive.

For our present  case, the pullback square is extremely familiar:
 we begin with the homotopy Tate square
      $$\xymatrix{
        S^0\rto \dto  &\etf \dto\\
D\efp\rto &\etf\sm D\efp        
}$$
where $\cF$ is the family of all finite subgroups. This is a pullback
square in $G$-spectra, and hence also in the category of spectra over
$\cV$. Furthermore, the objects are all commutative rings.  

In the present case it is very easy to find more economical
representatives of the homotopy types in the category of spectra over
$\cV$.

\begin{lemma}
  In the category of $G$-spectra over $\cV$ we have equivalences

  (i) $\etf\simeq E\lr{\Gp}$ and

  (ii) $E\cF_+\simeq \bigvee_{K\in \cK} E\lr{K}=E\lr{\cK}$

   \end{lemma}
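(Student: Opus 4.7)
The plan is to prove both equivalences by computing geometric fixed points $\Phi^H$ for each conjugacy class $(H)\in\cV$ and invoking the standard fact that, in the category of $G$-spectra over $\cV$, a map is an equivalence iff it induces an equivalence on $\Phi^H$ for every $(H)\in\cV$. Since $\cK$ consists of finite subgroups, $\cK\subseteq\cF$, and in the interesting case $\Gp\notin\cF$, so the isotropy patterns of the relevant spectra restricted to $\cV$ are easy to read off.

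For (i), recall $\etf$ is characterised by $\Phi^H(\etf)\simeq S^0$ for $H\notin\cF$ and $\Phi^H(\etf)\simeq *$ for $H\in\cF$, while $E\lr{\Gp}$ is characterised by $\Phi^H(E\lr{\Gp})\simeq S^0$ if $(H)=(\Gp)$ and $*$ otherwise. Restricted to $\cV=\cK\amalg\{\Gp\}$ these patterns coincide. To produce an honest comparison map, I would smash the cofibre sequence $\efp\to S^0\to\etf$ with $E\lr{\Gp}$: since $\Phi^K(E\lr{\Gp})\simeq *$ for every $K\in\cF$, we have $E\lr{\Gp}\sm \efp\simeq *$ in the $\cV$-category, so $E\lr{\Gp}\to E\lr{\Gp}\sm\etf$ is a $\cV$-equivalence. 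On the other side, the projection $E\lr{\Gp}\sm\etf\to\etf$ is a $\cV$-equivalence by the geometric-fixed-point check ($S^0\sm S^0\simeq S^0$ at $\Gp$, both $*$ at points of $\cK$). Composing gives the desired equivalence.

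For (ii), $\Phi^H(\efp)\simeq S^0$ for $H\in\cF$ and $*$ for $H\notin\cF$, which over $\cV$ gives $S^0$ at each $K\in\cK$ and $*$ at $\Gp$. Since $\Phi^H$ is a left adjoint it commutes with wedges, so $\Phi^H\bigl(\bigvee_{K\in\cK}E\lr{K}\bigr)\simeq\bigvee_{K\in\cK}\Phi^H(E\lr{K})$; at $(H)=(K_0)\in\cK$ only the summand $K=K_0$ contributes, yielding $S^0$, and at $\Gp$ every summand is trivial, yielding $*$. To realise this as an actual map I would use the canonical maps $E\lr{K}\to\efp$ (valid because $(K)\in\cF$) and assemble them into $\bigvee_{K\in\cK}E\lr{K}\to\efp$; the geometric-fixed-point calculation above then shows this is a $\cV$-equivalence.

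The only real subtlety is in (ii): one must check that the infinite wedge is still a suitable object in the $\cV$-localised category and that $\Phi^H$ distributes over it (both hold because $\Phi^H$ is a left adjoint, and because $\cV$-localisation preserves coproducts). A minor companion point is to observe that if $\Gp$ happens to be finite then the statement degenerates gracefully, but under the standing hypothesis (finite index in its normaliser, dominating a 1-dimensional block) the case of interest is $\Gp\notin\cF$, which is what the calculation above treats.
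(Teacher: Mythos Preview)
Your argument is correct and follows the same idea as the paper: verify the equivalences by comparing geometric isotropy over $\cV$. For (i) you are simply more explicit than the paper, which dismisses the claim as ``immediate'' once one notes that $\Gp$ is the only non-finite subgroup appearing in $\cV$; your zigzag through $E\lr{\Gp}\sm\etf$ is a perfectly good way to realise that observation as an actual map.

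For (ii) there is a small but real gap. You invoke ``canonical maps $E\lr{K}\to\efp$'', but such maps do not drop out of the universal-space description of $E\lr{K}$: from $E\lr{K}\simeq E[\subseteq K]_+\sm\tilde{E}[\subsetneq K]$ one gets maps \emph{from} $E[\subseteq K]_+$, not maps \emph{to} $\efp$, and the obstruction to producing the latter is exactly where the rational hypothesis enters. The paper handles this by citing the rational splitting $\efp\simeq\bigvee_{F\in\cF}E\lr{F}$ (Theorem~1.2 of \cite{EFQ}), which makes the inclusion of the sub-wedge $E\lr{\cK}=\bigvee_{K\in\cK}E\lr{K}$ immediate; it then observes that this inclusion is a $\cV$-equivalence, which is precisely your geometric-fixed-point check. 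So your proof and the paper's converge once you replace ``canonical'' by an appeal to the rational idempotent splitting.
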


    \begin{proof}
   By hypothesis,  $\cV=\cK \amalg \{\Gp\}$ for a collection $\cK$ of finite
   subgroups, not usually a family.

Since, $\Gp$ is the only infinite subgroup in $\cV$, Part (i) is
immediate. 

By \cite[Theorem  1.2]{EFQ}, there is an equivalence
$\efp \simeq \bigvee_{F\in \cF} E\lr{\cF}$. The inclusion of
$E\lr{\cK}$ is a $\cV$-equivalence.
\end{proof}

  \begin{remark}
Using the splitting cited in the lemma, there is an idempotent selfmap $e_{\cK}$ of $\efp$, with $e_{\cK}\efp 
\simeq E\lr{\cK}$. We then have 
$ e_\cK D\efp\simeq \prod_{K\in \cK} DE\lr{K}\simeq DE\lr{\cK}$. 

    The spectra 
    $DE\cF_+$ and $\prod_{K\in \cK} DE\lr{K}$ are usually not 
    equivalent in spectra over $\cV$, but we show their difference is entirely 
    concentrated at $\Gp$. 
    \end{remark}

\begin{cor}
  The square 
$$\xymatrix{
        S^0\rto \dto  &E\lr{\Gp}\dto\\
DE\lr{\cK}\rto &E\lr{\Gp}\sm DE\lr{\cK}        
}$$
is a homotopy pullback in $G$-spectra over $\cV$.
\end{cor}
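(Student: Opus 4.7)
The plan is to start from the standard homotopy Tate square
$$
\xymatrix{
S^0\ar[r]\ar[d] & \etf\ar[d]\\
D\efp\ar[r] & \etf\sm D\efp,
}
$$
which is a pullback in $G$-spectra and hence in $G$-spectra over $\cV$, since the localization preserves finite limits. The preceding lemma supplies $\cV$-equivalences $\etf\simeq E\lr{\Gp}$ and $\efp\simeq E\lr{\cK}$, which handles the right column and yields a cofibre sequence $E\lr{\cK}\to S^0\to E\lr{\Gp}$ over $\cV$. The delicate point is the bottom-left corner: the preceding remark warns that $D\efp$ and $DE\lr{\cK}=\prod_{K\in\cK}DE\lr{K}$ differ over $\cV$ at $\Gp$, so $D\efp$ cannot be replaced by $DE\lr{\cK}$ through a straightforward map-of-squares argument.

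Instead I would verify the claimed square is a homotopy pullback directly via the fibre criterion. Smashing the cofibre sequence $E\lr{\cK}\to S^0\to E\lr{\Gp}$ with $DE\lr{\cK}$ identifies the fibre of the bottom row as $E\lr{\cK}\sm DE\lr{\cK}$, while the fibre of the top row is $E\lr{\cK}$. The pullback property thus reduces to showing that the map
$$
\eta: E\lr{\cK}\lra E\lr{\cK}\sm DE\lr{\cK}
$$
induced by the unit $S^0\to DE\lr{\cK}$ is a $\cV$-equivalence. Since $E\lr{\cK}=\bigvee_{K\in\cK} E\lr{K}$ and smashing distributes over wedges, it suffices to treat one summand at a time.

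The main obstacle is that smashing with a single spectrum need not commute with the infinite product $DE\lr{\cK}=\prod_{K'}DE\lr{K'}$. The key point is that for each finite $K\in\cK$ (which is h-isolated in $\cK$ by hypothesis) the rational idempotent $e_K\in A(G)_{\Q}$ makes $E\lr{K}=e_K S^0$ the unit of a \emph{smashing} localization, and as a reflective localization it commutes with arbitrary products, giving
$$
E\lr{K}\sm DE\lr{\cK}\simeq \prod_{K'}(E\lr{K}\sm DE\lr{K'}).
$$
Each factor with $K'\neq K$ vanishes over $\cV$, since for any $K''\in\cV$ one has $\Phi^{K''}(E\lr{K}\sm DE\lr{K'})=\Phi^{K''}E\lr{K}\sm\Phi^{K''}DE\lr{K'}$, which is zero unless $K''=K=K'$. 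It therefore remains to verify that the unit $E\lr{K}\to E\lr{K}\sm DE\lr{K}$ is a $\cV$-equivalence, which is immediate since on $\Phi^K$ the map is the identity on $S^0$, and both sides vanish on $\Phi^{K''}$ for $K''\in\cV\setminus\{K\}$.
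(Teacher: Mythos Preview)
Your argument is correct and follows the same route as the paper: both verify the square is a pullback by comparing horizontal fibres, reducing to the claim that $E\lr{\cK}\to E\lr{\cK}\sm DE\lr{\cK}$ is a $\cV$-equivalence, which the paper simply asserts (``the fibre of the lower horizontal is $E\lr{\cK}$'') while you spell out the verification via idempotents and geometric fixed points.

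One small imprecision: the step where you pass $E\lr{K}\sm(-)$ through the infinite product is justified not by the general fact that smashing (equivalently, reflective) localizations preserve products---they need not---but by the stronger fact that $E\lr{K}=e_KS^0$ is a \emph{retract} of the unit, hence compact, so smashing with it preserves all limits; equivalently, applying a $\pi_0$-idempotent commutes with products on the nose. With that correction your argument is a fully detailed version of the paper's one-line proof.
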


\begin{proof}
This follows since the horizontal fibres of the original pullback are $\efp$ and
in the second, the fibre of the lower horizontal is $E\lr{\cK}$.
\end{proof}

\subsection{Coefficient rings}
It remains to study  the categories of modules over each of the
terms. For this we adapt the work Sections \ref{sec:DEGp} and \ref{sec:unif}. 
For individual subgroups we have already seen modules over $\mcR(K)
=DE\lr{K}$ in $G$-spectra are equivalent to  modules over
$\mcR(K)^{N^f_G(K)}\simeq C^*(BW_G^e(K))$, which is a non-equivariant
spectrum with an action of $W_G^d(K)$. 

For each $K\in \cK$ we consider the $G$-spectrum
 $\mcRt(K)=F_{N^f_G(K)}(G_+, \mcR (K))$ and then let $R=\prod_{K\in
   \cK}\mcRt(K)$. We have
$$R^G=\left( \prod_KF_{N_G^f(K)}(G_+, \mcR(K))\right)^G\simeq \prod_K\mcR(K))^{N_G^f(K)}
$$
and 
$$\mcR(K)^{N_G^f(K)}=(\mcR(K)^K)^{W_G^e(K)}=(DEN/K_+)^{W_G^e(K)}=D\Bt W_G^e(K). $$

Stripping away specifics, we will argue that $\mcR(K)$-module $G$-spectra are
equivalent to modules over a commutative ring $\pi_*^{N^f}(\mcR(K))$ for a
subgroup $N^f$ of $G$ in a
category of representations of a finite group $W^d$. There are steps
in equivariant homotopy, leading from $\mcR(K)$-modules in $G$-spectra to
$\mcR(K)^{N^f}$-modules in spectra with an action of $W^d$. Shipley's
Theorem shows this is equivalent to $\Q[W^d]$ modules over a DGA  with homology
$\pi_*(R^{N^f})$, and this is shown to be formal.

From an expository point of view, the first thing to identify is the
target ring $\pi_*(\mcR(K)^{N^f})$, even though it doesn't play a role in
the argument until the end. 

\begin{lemma}
\label{lem:coeffs}
  We have 
  $$\piG_*(\mcRt(1))= \piG_*(DE\lr{\cK})=\prod_KH^*(BW_G^e(K))=\cOcK$$
    $$\piG_*(\mcRt(\Gp))=\piG_*(\mcR (\Gp))=\pi^G_*(E\lr{\Gp})=H^*(BW_G^e(\Gp))=\cO_{\Gp}=\Q$$
  \end{lemma}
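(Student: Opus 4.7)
The plan is to compute each of the two homotopy rings by the same two-step procedure: move the problem off the fixed-point subgroup via the coinduction adjunction, and then identify the resulting non-equivariant (up to a component group action) spectrum via the machinery of Sections \ref{sec:DEGp} and \ref{sec:unif}.

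For the first equation, I would first observe that $DE\lr{\cK}\simeq\prod_{K\in\cK} DE\lr{K}$ and, since $S^0$ is a compact $G$-spectrum, $\pi^G_*$ commutes with the product, reducing the problem to computing $\pi^G_*(\mcRt(K))$ for each $K\in\cK$ separately. The coinduction adjunction gives
$$\pi^G_*\bigl(F_{N^f_G(K)}(G_+,\mcR(K))\bigr)\;\cong\;\pi^{N^f_G(K)}_*(DE\lr{K}).$$
Since $K$ is finite and $DE\lr{K}$ has geometric isotropy concentrated at the single conjugacy class $(K)$, passage to $K$-fixed points is an equivalence of module categories by Lemma \ref{lem:uniform}(iii), and $(DE\lr{K})^K$ is a cofree $W_G^e(K)$-spectrum realised non-equivariantly as $DBW_G^e(K)_+$. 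The Eilenberg--Moore theorem for the connected group $W^e_G(K)$ (as used in Section \ref{sec:DEGp}) then identifies
$$\pi^{N^f_G(K)}_*(DE\lr{K})\;=\;\pi^{W^e_G(K)}_*(DEW^e_G(K)_+)\;=\;H^*(BW^e_G(K)).$$
Taking the product over $K\in\cK$ produces $\cOcK=\prod_K H^*(BW^e_G(K))$.

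For the second equation, the hypothesis that $\Gp$ has finite Weyl group forces $W^e_G(\Gp)=1$, so the identification $N^f_G(\Gp)=\Gp$ makes the coinduction for $\Gp$ trivial up to the relevant equivalence, and I need only verify that $\pi^G_*(E\lr{\Gp})=\Q$ concentrated in degree zero. Over $\cV$ the spectrum $E\lr{\Gp}$ is equivalent to $\etf$ for $\cF$ the family of finite subgroups; since $\Gp\notin\cF$ and is the unique infinite subgroup in $\cV$, passage to $G$-fixed points gives $S^0$, whose rational homotopy is $\Q$. This matches $H^*(BW^e_G(\Gp))=H^*(B\{1\})=\Q$, confirming both displayed equalities.

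The main obstacle is the bookkeeping in the first computation: the coinduction $F_{N^f}(G_+,-)$ must be compatible with the product structure over the infinite index set $\cK$, and the identification $(DE\lr{K})^K\simeq DEW_G^e(K)_+$ as $W_G^e(K)$-spectra must be performed in a way that is natural in $K$ so that the final product assembles correctly. Disconnected Weyl groups introduce an extra $W_G^d(K)$-action which must be carried along through each step; Lemma \ref{lem:uniform} is designed precisely to do this, so the argument amounts to applying that lemma stalkwise and observing that everything is compatible with the product.
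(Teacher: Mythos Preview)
Your route is substantially more elaborate than the paper's, and it also has a genuine gap in the first computation.

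The paper's proof of the first equation is a one-line chain of elementary adjunctions, with Spanier--Whitehead duality as the first move:
\[
\pi^G_*(DE\lr{K})=[S^0,DE\lr{K}]^G_*=[E\lr{K},S^0]^G_*=[E\lr{K},S^0]^{N}_*=[EN/K_+,S^0]^{N/K}_*=[BN/K_+,S^0]_*=H^*(BN/K).
\]
No Eilenberg--Moore, no module-category equivalences, no coinduction: one dualises, restricts to the normaliser (legitimate because $E\lr{K}$ is concentrated at $(K)$), passes to $K$-fixed points, and reads off Borel cohomology. You never invoke the duality $[S^0,DE\lr{K}]^G\cong[E\lr{K},S^0]^G$, and that is exactly what makes the paper's argument so short.

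Your detour through $\mcRt(K)=F_{N^f}(G_+,DE\lr{K})$ computes a \emph{different} quantity. The coinduction adjunction gives $\pi^G_*(\mcRt(K))=\pi^{N^f}_*(DE\lr{K})$, not $\pi^G_*(DE\lr{K})$; the two differ by the passage from $N^f$ to $N$, i.e.\ by taking $W^d_G(K)$-invariants. So when you write ``reducing the problem to computing $\pi^G_*(\mcRt(K))$'' you have silently replaced $DE\lr{K}$ by $\mcRt(K)$, and the middle equality $\pi^G_*(DE\lr{\cK})=\prod_K H^*(BW_G^e(K))$ is never actually addressed. The paper, by contrast, computes $\pi^G_*(DE\lr{K})$ directly.

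Two smaller points. Invoking the equivalences of module categories from the lemma following Lemma~\ref{lem:uniform} to deduce a homotopy-group identity is a category error: those are statements about Quillen equivalences of module categories, not about the homotopy of a single object, and the underlying fixed-point identifications are available much more cheaply. For the second displayed equation your argument is broadly correct and close in spirit to the paper's (``simple since $\Gp$ has finite Weyl group''), though ``passage to $G$-fixed points gives $S^0$'' is loose: one should restrict to $N_G(\Gp)$ and identify the $\Gp$-fixed points, using that $W_G(\Gp)$ is finite.
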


  \begin{proof}
The first part is   straightforward from the isomorphisms
    \begin{multline*}
 \pi^G_*(DE\lr{K})=[S^0, DE\lr{K}]^G_*=[E\lr{K},S^0]^G=[E\lr{K},
 S^0]^N=\\
 [EN/K_+,S^0]^{N/K}=[BN/K_+,S^0]=H^*(BN/K)
 \end{multline*}

The second part is simple since $\Gp$ has finite Weyl group
(otherwise, we would need to take careful account of the
fact the calculation is in the $\cV$-local category).
\end{proof}

\begin{lemma}
\label{lem:piGcS}
There is a multiplicatively closed set $\cS$ so that 
  $$\pi^G_*(E\lr{\Gp}\sm DE\lr{\cK})=\cSi \prod_{K\in \cK}H^*(BW_G^e(K))=\cSi\cOcK$$
To describe the localization we   split the product up into three 
factors 
$$\cOcK=\cOcKone\times \cOcKzero \times \cOcKR$$
\begin{itemize}
\item On $\cK_1$ the multiplicatively closed set is 
  $\cE =\{e(W)\st W\in \mathrm{Rep}(N_G(\Gp)), W^{\Gp}=0\}$
\item On $\cK_0$, the multiplicatively closed 
set is 
$\cI=\{i(U) \st U\subseteq \cK_0  \mbox{ closed in } \cV \}$. 
\item 
On $\cKR$, the   multiplicatively closed set is $\{ 0\}$. 
\end{itemize}
\end{lemma}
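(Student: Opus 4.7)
The plan is to compute $\pi^G_*(E\lr{\Gp}\sm DE\lr{\cK})$ by exploiting the splitting $DE\lr{\cK}\simeq \prod_{K\in\cK} DE\lr{K}$ and the three-part partition $\cK=\cK_1\amalg \cK_0\amalg\cKR$. One first verifies that smashing with $E\lr{\Gp}$ and taking $\pi^G_*$ distributes over the corresponding product decomposition in the $\cV$-local stable category, so that the computation splits into three independent calculations. Since Lemma~\ref{lem:coeffs} already identifies $\pi^G_*(DE\lr{\cK})$ with $\cOcK$, the unit $S^0\to E\lr{\Gp}$ induces a localisation map $\cOcK\to \pi^G_*(E\lr{\Gp}\sm DE\lr{\cK})$, and the task is to identify its image on each factor with the prescribed $\cS$-localisation.

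For $K\in \cK_1$ the cotoral relation $K\leq_{ct}\Gp$ gives a torus $\Gp/K\subseteq W_G^e(K)$, and $E\lr{\Gp}$ acts like the geometric universal space $\etf$ in the Tate construction for this torus. The standard isotropy separation calculation (going back to \cite{s1q} and formalised as in \cite{modfps}) yields
$$\pi^G_*(E\lr{\Gp}\sm DE\lr{K})\cong \cE^{-1}H^*(BW_G^e(K)),$$
where $\cE$ consists of Euler classes of representations $W$ of $N_G(\Gp)$ with $W^{\Gp}=0$: these classes have vanishing locus in the Balmer spectrum which contains $K$ but not $\Gp$, so become invertible after smashing with $E\lr{\Gp}$. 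Taking products over $\cK_1$ yields the $\cE^{-1}\cOcKone$ factor.

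For $K\in \cK_0$, the supports of $E\lr{\Gp}$ and $DE\lr{K}$ in $\cV$ are disjoint, so $E\lr{\Gp}\sm DE\lr{K}\simeq 0$ for each individual $K$. The non-vanishing in the product arises because $\Gp$ is an h-limit of points of $\cK_0$: the h-closed subsets $U\subseteq \cK_0$ of $\cV$ are precisely those not accumulating at $\Gp$, and the characteristic functions $i(U)\in \cOcKzero$ are idempotents whose vanishing locus is $U$, so they become invertible after smashing with $E\lr{\Gp}$. This exactly matches $\cI^{-1}\cOcKzero$. On the $\cKR$ factor, $\cKR$ has no accumulation at $\Gp$ in $\cV$ by hypothesis, so the same support argument gives $\pi^G_*(E\lr{\Gp}\sm \prod_{K\in\cKR}DE\lr{K})=0$, consistent with the convention of inverting $0$.

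The main obstacle is the $\cK_0$ case: smashing with $E\lr{\Gp}$ does not commute with infinite products in general, so one must argue that $\pi^G_*(E\lr{\Gp}\sm \prod_{K\in\cK_0}DE\lr{K})$ is exactly $\cI^{-1}\cOcKzero$ and not some strictly smaller or larger quotient. I would handle this by writing $E\lr{\Gp}$ as a filtered homotopy colimit of $\cV$-cellular approximations built from finite-type pieces, using the finiteness of $W_G(\Gp)$ to control the resulting spectral sequence, and then identifying the colimit with the characteristic-function localisation; the infinite-support elements of $\cS$ are exactly what is needed to match the limits. The $\cK_1$ piece is more classical Tate calculus but requires analogous care to pass from one subgroup to the product.
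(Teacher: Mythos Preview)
Your treatment of $\cK_0$ and $\cKR$ matches the paper's (it writes $E\lr{\Gp}\sm E\lr{\cK_0}=\colim_U e_U E\lr{\cK_0}$ over $\cV$-closed $U\subseteq\cK_0$, which is your characteristic-function localisation), but for $\cK_1$ the paper sidesteps the product-versus-smash obstacle you flag rather than confronting it. Instead of computing $\pi^G_*(E\lr{\Gp}\sm DE\lr{K})$ factor by factor and then reassembling over the infinite product, the paper shows at the object level that the algebraic localisation $DE\lr{\cK_1}\to\cE^{-1}DE\lr{\cK_1}$ \emph{is} the $E\lr{\Gp}$-localisation, via a two-part support check: (a) $\cE^{-1}DE\lr{\cK_1}$ is $K$-equivariantly contractible for each $K\in\cK_1$ (choose the Euler class of a representation of the torus $\Gp/K$), and (b) the fibre $\Gamma_{\cE}DE\lr{\cK_1}$ has trivial geometric $\Gp$-fixed points (restricting to $\Gp$, inverting $\cE$ is smashing with $\siftyV{\Gp}$, so the fibre is $E\cP_+\sm DE\lr{\cK_1}$). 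Once this identification holds at the object level the homotopy-group statement is immediate and no infinite-product bookkeeping is required. Your filtered-colimit resolution would in the end amount to the same presentation of $E\lr{\Gp}$ over $\cK_1$ (the colimit is precisely the one inverting these Euler classes), but the paper's formulation gets there more directly.
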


\begin{proof}
The splitting and the $\cKR$ part are clear. 

We start with the $\cK_0$ part. Since supports are closed under
cotoral specialization, any idempotent nonzero on $\Gp$ will
be nonzero on $\cK_1$. Next we note that for any $\cV$-closed 
subset $U$ of $\cK_0$  there is an  idempotent 
$e_U$ with support $U\cup \{\Gp\}\cup \cK_1$ because the latter is open and 
closed in $\cV$.  We may then observe 
$$E\lr{\Gp}\sm E\lr{\cK_0}=\colim_U e_UE\lr{\cK_0},  $$
because $e_UE\lr{K}\simeq *$ for $K\not \in V$. 

\newcommand{\DEK}{DE\lr{\cK}}
For the $\cK_1$ part we will show directly that the algebraic
localization 
$$\DEK \lra \cEi \DEK $$
is the isotropic localization. For this we need to show that $\cEi
\DEK$ is $K$-equivariantly contractible for all $K\in \cK$ and that 
$\Gamma_{\cE}\DEK$ has non-equivariantly contractible geometric
$\Gp$-fixed points. 

For the first statement, it suffices to say that for each $K\in \cK$
there is an element $e\in \cE$ so that $e $ is $K$-equivariantly
null. For this we choose $e(V)$ for a representation of the torus 
$\Gp/K$. For the second statement, we may work $\Gp$-equivariantly,
and we note that inverting $\cE$ commutes with restriction of
subgroups. Thus we have a  $\Gp$-equivariant equivalence 
$\cEi \DEK\simeq \siftyV{\Gp}\sm \DEK$ and $\Gamma_{\cE}\DEK\simeq 
E\cP_+\sm \DEK$, which has not $\Gp$-fixed points as required.  



\end{proof}

\begin{remark}
In the $\cK_1$ part one might hope to make an argument using
representation theory to give a model of $\etf \sm DE\lr{\cK}$ of the
form $\siftyV{\Gp}$. This will often work, for example when $\Gp=G$. 
However if $G=SO(3)$ and $\Gp=SO(2)$, all representations of $SO(3)$
have fixed points under elements of order 2, so no such model exists. 
\end{remark}

\begin{cor}
  \label{cor:piGcS}
  (i) For any $DE\lr{\cK}$-module $N$, the map
  $$N\lra E\lr{\Gp} \sm N$$
  induces inversion of $\cS$, so that 
  $$\piG_*(E\lr{\Gp}\sm N) =\cSi \pi^G_*(N). $$

(ii)  The map $E\lr{\Gp}\sm X\lra DE\lr{\cK} \sm E\lr{\Gp}\sm X$ induces extension of scalars along $\Q=\mcR(\Gp)\lra \cSi \cOcK$
so that 
$$\pi^G_*(E\lr{\Gp}\sm DE\lr{\cK}\sm X)=\cSi \cOcK \tensor
\pi_*(\Phi^{\Gp}X)$$
\end{cor}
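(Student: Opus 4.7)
The plan is to deduce (i) from Lemma \ref{lem:piGcS} by a triangulated--category argument, and then obtain (ii) by combining (i) with a Künneth identification that becomes transparent over the field $\pi^G_*(E\lr{\Gp}) = \Q$.

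For part (i), I use that $(\cdot) \sm E\lr{\Gp}$ is smashing Bousfield localization onto the $\cV$-local category, so it preserves cofibre sequences and arbitrary coproducts; the same is true of algebraic $\cS$-inversion applied to $\pi^G_*$. Lemma \ref{lem:piGcS} is the desired statement in the case $N = DE\lr{\cK}$. The class of $DE\lr{\cK}$-modules $N$ for which the natural comparison
$$\cSi \pi^G_*(N) \lra \pi^G_*(E\lr{\Gp} \sm N)$$
is an isomorphism contains $DE\lr{\cK}$ (and its shifts), is closed under cofibre sequences by the five lemma, and is closed under coproducts since both functors preserve them. Since every $DE\lr{\cK}$-module is built from (shifts of) $DE\lr{\cK}$ by these operations, the result extends to all $N$.

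For part (ii), I regard $E\lr{\Gp} \sm X$ as a module over $E\lr{\Gp}$ and view $DE\lr{\cK} \sm E\lr{\Gp} \sm X$ as obtained by extension of scalars along the map $E\lr{\Gp} \to DE\lr{\cK} \sm E\lr{\Gp}$. By Lemma \ref{lem:coeffs}, the coefficient ring $\pi^G_*(E\lr{\Gp}) = H^*(BW_G^e(\Gp)) = \Q$ is a field, so the relevant Künneth spectral sequence degenerates and yields
$$\pi^G_*(DE\lr{\cK} \sm E\lr{\Gp} \sm X) \iso \pi^G_*(DE\lr{\cK} \sm E\lr{\Gp}) \tensor_{\Q} \pi^G_*(E\lr{\Gp} \sm X).$$
The first factor is $\cSi \cOcK$ by Lemma \ref{lem:piGcS}. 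For the second factor, since $\Gp$ has finite Weyl group and $E\lr{\Gp} \simeq \etf$ in the $\cV$-local category, the standard rational comparison of categorical and geometric fixed points at an isolated isotropy type gives $\pi^G_*(E\lr{\Gp} \sm X) \iso \pi_*(\Phi^{\Gp} X)$, completing the identification.

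The point that requires care is the interaction of $(\cdot) \sm E\lr{\Gp}$ with the infinite product structure $DE\lr{\cK} = \prod_{K\in \cK} DE\lr{K}$: one must work with wedges of shifts of the ring $DE\lr{\cK}$ as generators (rather than per-summand generators) so that the triangulated argument in (i) goes through without worrying about the failure of $\pi^G_*$ to commute with the infinite products appearing in $\cOcK$. Once (i) is secured in this form, part (ii) follows without further subtlety by the Künneth argument above.
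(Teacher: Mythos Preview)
Your argument is sound but takes a different route from the paper in both parts.

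For (i), the paper does not run a localizing-subcategory argument. Instead it uses the explicit description of $E\lr{\Gp}$ established in the proof of Lemma~\ref{lem:piGcS}: after noting $\pi^G_*=\pi^N_*$ on these spectra, the models there exhibit $E\lr{\Gp}\sm(\cdot)$ on $\cK_1$ as a filtered colimit along multiplication by Euler classes, and on $\cK_0$ as a colimit over the idempotents $e_U$. Applying $\pi^G_*$ to those colimits gives the algebraic $\cS$-inversion for any $N$ in one step. Your approach needs only the single case $N=DE\lr{\cK}$ as input and is portable to situations where no such explicit model is available; the paper's approach is more direct here because the hard work was already done in the preceding lemma.

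For (ii), the paper constructs a natural transformation $\cSi\cOcK\tensor\pi_*(\Phi^{\Gp}X)\to\pi^G_*(E\lr{\Gp}\sm DE\lr{\cK}\sm X)$ of homology theories in $X$ (flatness of $\cSi\cOcK$ over $\Q$ is what makes the source a homology theory), observes it is compatible with the $\cW$-action, and then checks it on $N^f$-representation spheres. Your K\"unneth argument is cleaner in spirit, but note that the spectral sequence you invoke is for a relative smash over the \emph{equivariant} ring $E\lr{\Gp}$, and $\pi^G_*$ alone is not the full input for such a spectral sequence when $W_G^d(\Gp)\neq 1$; one makes this rigorous by first passing along $\Phi^{\Gp}$ to $W_G^d(\Gp)$-spectra and applying ordinary K\"unneth over $\Q$ there. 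The paper's ``check on spheres'' bypasses that bookkeeping entirely.
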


\begin{proof}
  (i) We note that $\piG_*=\pi^{N}_*$ for these spectra, and then the
  models of $E\lr{\Gp}$ given in the lemma give the result.

  (ii) The map $S^0\lra DE\lr{\cK}$ induces a map
  $\pi_*(\Phi^{\Gp}X)\lra \pi^G_*(E\lr{\Gp}\sm DE\lr{\cK}\sm X)$ which
  extends to a natural transformation.   Since $\cSi\cOcK$ is flat it
  is a natural transformation of homology theories compatible with the
  action of $\cW_K$.  In checking it is an isomorphism we may ignore
  the action of $\cW_K$. The result therefore follows from the case
  when $X$ is an $N^f$-equivariant sphere. 
\end{proof}

\subsection{Uniformization and formality}
There are two convenient facts about module categories that we now
need to exploit. Firstly, for a finite group $W$, the category of $A[W]$-modules is equivalent
to the category of $A$-modules with a $W$-action: $A[W]\modules\simeq
(A\modules)[W]$ (requires $W$ to be finite).
Secondly, the category of modules over a product
$A_1\times A_2$ is equivalent to the product of the module categories:
$(A_1\times A_2)\modules\simeq (A_1\modules)\times
(A_2\modules)$ (requires that this is a finite product).  

By tom Dieck's finiteness theorem \cite{tomDieckTGRT}, $W_G^d(K)$ takes only finitely many 
values, so there is a partition of $\cK=\cK_1\amalg \cdots \amalg
\cK_N$ into finitely many pieces, each of which has a single value  $W_i$ of $W_G^d(K)$.
For brevity we write 
$$(\prod_KR_K)\modules [\cW]:=(\prod_{K\in
  \cK_1}R_K\modules)[W_1]\times \cdots \times
(\prod_{K\in \cK_N}R_K\modules)[W_N]$$

\begin{lemma}
\label{lem:dek}
  With $R=DE\lr{\cK}$, there is an equivalence
  $$R\modules-G-\spectra \simeq \left(\prod_{K\in \cK}D\Bt
    W_G^e(K)_+\right) \modules-\spectra[\cW].$$
\end{lemma}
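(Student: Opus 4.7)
The plan is to assemble Lemma \ref{lem:dek} by combining the replacement of $R=DE\lr{\cK}$ with the product $\prod_K \mcRt(K)$, a subgroup-by-subgroup application of the uniformization result (Lemma \ref{lem:uniform}), and tom Dieck's finiteness theorem to handle the varying component structure.

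First, I would observe that the $G$-spectrum $DE\lr{\cK}$ splits as a product $\prod_{K \in \cK} DE\lr{K}$, and for each $K$ the natural map $DE\lr{K} \lra F_{N_G^f(K)}(G_+, DE\lr{K}) = \mcRt(K)$ is a $\cV$-local equivalence. Indeed, $DE\lr{K}$ is $N^f_G(K)$-free in the appropriate sense, and the uniformization step (Lemma \ref{lem:uniform}, parts (i) and (ii)) shows that restriction/coinduction along $N^f_G(K) \subseteq G$ gives an equivalence on module categories after passing to modules with $\cV$-supported geometric isotropy. Thus we may replace $R$ with $\prod_K \mcRt(K)$ without loss.

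Next, for each individual $K$, the chain of equivalences
$$\mcRt(K)\modules\mbox{-}G\mbox{-}\spectra \simeq \mcR(K)\modules\mbox{-}N_G(K)\mbox{-}\spectra \simeq \mcR(K)^K\modules\mbox{-}W_G(K)\mbox{-}\spectra \simeq \mcR(K)^{N_G^f(K)}\modules\mbox{-}\spectra[W_G^d(K)]$$
given by restriction, $K$-fixed points, and $W_G^e(K)$-fixed points (the last being an Eilenberg--Moore equivalence since $W_G^e(K)$ is connected, as spelled out in Lemma \ref{lem:uniform}) identifies $\mcRt(K)\modules$-$G$-spectra with $D\Bt W_G^e(K)_+\modules$-spectra carrying a $W_G^d(K)$-action, using the computation $\mcR(K)^{N_G^f(K)} \simeq D\Bt W_G^e(K)_+$ from Lemma \ref{lem:coeffs}.

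The main obstacle is globalizing these subgroup-by-subgroup equivalences over the infinite set $\cK$, since modules over an infinite product of ring spectra are not in general the product of module categories, and the finite-group component $W_G^d(K)$ varies with $K$. Here I would invoke tom Dieck's finiteness theorem: the function $K \mapsto W_G^d(K)$ on the set of conjugacy classes in $\cV$ takes only finitely many values $W_1, \ldots, W_N$, giving a partition $\cK = \cK_1 \amalg \cdots \amalg \cK_N$ with $W_G^d(K) \cong W_i$ constant on $\cK_i$. On each piece the component structure is uniform, so modules over $\prod_{K \in \cK_i} \mcRt(K)$ in $G$-spectra can be identified (using that distinct orbit types give orthogonal idempotent summands after localizing to $\cV$, together with the idempotent splitting of $DE\lr{\cK}$ indexed by $\cK$) with the product category $\bigl(\prod_{K \in \cK_i} D\Bt W_G^e(K)_+\modules\mbox{-}\spectra\bigr)[W_i]$. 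Taking the finite product over $i = 1, \ldots, N$ assembles these into the desired equivalence, using that finite products of module categories do agree with modules over the finite product of rings.
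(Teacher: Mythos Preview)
Your proposal assembles the right ingredients --- Lemma \ref{lem:uniform}, the identification $\mcR(K)^{N_G^f(K)} \simeq D\Bt W_G^e(K)_+$, and tom Dieck's finiteness theorem --- and the single-$K$ equivalences you describe are correct. The gap is in how you globalize.

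You correctly flag that modules over an infinite product of rings need not form the product of the module categories, but your proposed resolution via idempotent splittings does not close this gap. The idempotent decomposition of $DE\lr{\cK}$ only tells you the \emph{ring} splits as a product; it does not let you split the module category over the infinite set $\cK_i$. Even on a piece $\cK_i$ with constant $W_i$, the category of $(\prod_{K\in\cK_i}\mcRt(K))$-module $G$-spectra is not $\prod_{K\in\cK_i}(\mcRt(K)\modules\mbox{-}G\mbox{-}\spectra)$, so the subgroup-by-subgroup equivalences cannot simply be assembled by taking a product of categories.

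The paper proceeds differently. Lemma \ref{lem:uniform} supplies a \emph{single} right Quillen functor $R\modules\mbox{-}G\mbox{-}\spectra \lra (R^G\modules\mbox{-}\spectra)[\cW]$, constructed once and for all before any decomposition into factors. After partitioning $\cK$ by tom Dieck so that each piece carries a fixed finite group $W$, one identifies generators on both sides: the source is generated by the $\mcRt(K)$, the target by $(\prod_K D\Bt W_G^e(K)_+)\sm W_+$, and the functor carries one to the other. Since equivalences between these generating cells are detected factor by factor (and that is exactly where your single-$K$ argument applies), the unit and counit of the Quillen adjunction are equivalences on generators, and the Cellularization Principle \cite{cellprin} then gives the global equivalence. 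No infinite product of module categories is ever invoked.
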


\begin{proof}
Using Lemma \ref{lem:uniform}, we have a right Quillen functor 
$$R\modules-G-\spectra\lra ( R^G\modules-\spectra) [\cW]$$
which is an equivalence on each individual factor. 

Breaking the product up into finitely many parts we may assume that
there is a single finite group $W$ associated to each factor $R_K$, so
that each factor is generated by $\mcRt(K)=F_{N_G^f(K)}(G_+, \mcR(K))$
corresponding to $D\Bt W_G^e(K)_+\sm W_+$. The product on the right is 
generated by $(\prod_K D\Bt W_G^e(K)_+)\sm W_+$. 

Equivalences of cells are detected on the factors,  so that the unit
and counit are equivalences on generators. Hence by the Cellularization Principle \cite{cellprin}, it is an equivalence as required. 
\end{proof}

\begin{cor}
(i) The category of $DE\lr{\cK}$-module $\Gspectra \lr{\cV}$ is 
equivalent to the category of DG $\cW$-equivariant objects in 
$\cOcK$-modules.

(ii) The category of $DE\lr{\Gp}$-module $\Gspectra \lr{\cV}$ is 
equivalent to the category of DG $\cW_{\Gp}$-equivariant $\cO_{\Gp}$-modules.
\end{cor}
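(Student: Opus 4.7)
The plan is to combine the uniformization result of Lemma \ref{lem:dek} with Shipley's $H\Q$-algebra theorem and a rational formality argument. Both parts of the corollary have the same shape, with part (ii) being a special case in which the underlying ring is simply $\Q$.

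For part (i), I would start from the equivalence
\[
DE\lr{\cK}\modules\text{-}G\text{-}\spectra \;\simeq\; \Bigl(\prod_{K\in\cK} D\Bt W_G^e(K)_+\Bigr)\modules\text{-}\spectra[\cW]
\]
supplied by Lemma \ref{lem:dek}. By Lemma \ref{lem:coeffs}, the homotopy of each factor is $H^*(BW_G^e(K))$, and rationally this is a polynomial ring. Shipley's theorem \cite{ShipleyHZ} then gives a zig-zag of Quillen equivalences between $R_K$-modules in spectra and DG modules over the DGA $\pi_*(R_K)$. The next step is rational formality: since each $H^*(BW_G^e(K))$ is a free graded-commutative $\Q$-algebra, the associated DGA is intrinsically formal, so we may replace $\pi_*(R_K)$ by $H^*(BW_G^e(K))$ itself, equivariantly with respect to $\cW_K = W_G^d(K)$. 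Partitioning $\cK$ according to the isomorphism type of $W_G^d(K)$ (finite by tom Dieck) reduces to finitely many product blocks, and on each block the Morita-style identification $A[W]\modules \simeq (A\modules)[W]$ together with compatibility of Shipley's equivalence with finite products of rings assembles these into DG $\cW$-equivariant modules over $\cOcK = \prod_K H^*(BW_G^e(K))$.

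For part (ii), the same recipe applies but trivially. Because $\Gp$ has finite Weyl group, $W_G^e(\Gp) = 1$, so $\mcR(\Gp)^{N_G^f(\Gp)} \simeq D\Bt W_G^e(\Gp)_+ \simeq H\Q$. Applying Lemma \ref{lem:uniform} (with a single subgroup) yields $DE\lr{\Gp}$-modules in $G$-spectra over $\cV$ equivalent to $H\Q$-modules in spectra with a $\cW_{\Gp} = W_G^d(\Gp)$-action, and Shipley's theorem collapses this to DG $\Q$-modules with $\cW_{\Gp}$-action, i.e.\ DG $\cW_{\Gp}$-equivariant $\cO_{\Gp}$-modules.

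The main obstacle is the behaviour of Shipley's equivalence under the infinite product: one must verify that the zig-zag of Quillen equivalences can be applied uniformly over the infinite set $\cK$, and that the resulting equivalence respects the $\cW$-action that sits crosswise through the finitely many blocks of constant $W_G^d(K)$. The key point is that cells, weak equivalences and the formality quasi-isomorphisms can all be chosen blockwise, so that the Cellularization Principle \cite{cellprin} applied to the generators $\prod_K D\Bt W_G^e(K)_+ \sm W_+$ in each block certifies the composite is a Quillen equivalence. Once this is in place, the formality of polynomial rings over $\Q$ is standard and equivariant with respect to any action by linear automorphisms, which is all that is needed for the $\cW$-actions here.
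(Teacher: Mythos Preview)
Your proposal is correct and follows essentially the same route as the paper: Lemma \ref{lem:dek} to pass to non-equivariant module spectra with $\cW$-action, Shipley's theorem to reach DG modules, and Maschke-based equivariant formality of the polynomial rings $H^*(BW_G^e(K))$. The only organisational difference is that the paper applies Shipley's theorem once to the single product ring $\prod_K D\Bt W_G^e(K)_+$ and then proves formality of the resulting DGA via the idempotent splitting $\cOcK'\to \prod_K e_K\cOcK'$, whereas you think of the formality factor-by-factor first; this sidesteps your worry about Shipley's equivalence under infinite products, since only the formality step, not Shipley's theorem itself, needs the decomposition.
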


\begin{proof} Case (ii) is straightforward. 

In Case (i) we have a Quillen pair showing that $DE\lr{\cK}$-module
$\Gspectra \lr{\cV}$ are equivalent to $D\tilde{E}\lr{\cK}^G$-module
$\cW$-spectra by Lemma \ref{lem:dek}. By Shipley's Theorem this is
equivalent to DG modules in $\cW$-spectra over a DGA with homology
$\pi_*D\tilde{E}\lr{\cK}^G=\prod_{K \in \cK} H^*(BW_G^e(K))=\cOcK$ by
Lemma \ref{lem:coeffs}. 

To see that the DGA is formal, we argue as follows. For a single
factor  we have a CDGA $\cO_{K}'$ with homology $\cO_K$. Thus
$H_*(\cO_{\Gp}')=Z(\cO_{\Gp}')/B(\cO_{\Gp}')$ is isomorphic 
to $\cO_{\Gp}=\mcR (\Gp)$, which is the symmetic algebra on a finite 
dimensional vector space $V$ with an action of 
$\cW_\Gp$. By Maschke's Theorem we may find a 
submodule $V'$ of $Z(\cO_{K}')$ mapping to $V$, and thus we may choose an 
equivariant homology isomorphism $\cV=\mathrm{Symm} (V)\lra \cO_{K}'$. This
covers the generic point $\Gp$. For $\cK$, we then use the equivalence
$\cOcK'\lra \prod_K e_K\cOcK'$ followed by the above maps on each
factor. 
\end{proof}

We need to show that the objectwise equivalences may be assembled to
an equivalence of cospans. 

\begin{lemma}
\label{lem:formalityone}
The diagram of $G$-spectra is intrinsically formal. 
\end{lemma}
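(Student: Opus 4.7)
The plan is to upgrade the objectwise formality established in the previous corollary to formality of the entire cospan. Applying $\pi^G_*$ to the cospan $DE\lr{\cK} \to E\lr{\Gp} \sm DE\lr{\cK} \leftarrow E\lr{\Gp}$ and using Lemmas \ref{lem:coeffs} and \ref{lem:piGcS}, the homology cospan of $\cW$-equivariant CDGAs to match is
\[
\cOcK \longrightarrow \cSi \cOcK \longleftarrow \cO_{\Gp} = \Q.
\]
Intrinsic formality amounts to producing a zig-zag of $\cW$-equivariant CDGA quasi-isomorphisms between the $G$-fixed-point cospan and this homology cospan.

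First I would pin down formal models at the two outer vertices. The right-hand vertex is trivial: since $\Gp$ has finite Weyl group, $\cO_{\Gp} = \Q$ is concentrated in degree zero and is intrinsically formal as a CDGA, with trivial $\cW_{\Gp}$-action. For the nub, the argument of the preceding corollary applies verbatim: using Maschke's theorem over $\Q$, choose on each factor a $\cW$-equivariant section of the projection $Z(\cOcK') \to \cOcK$ from a chosen CDGA representative $\cOcK'$ for $(DE\lr{\cK})^G$, giving a $\cW$-equivariant quasi-isomorphism $\cOcK \to \cOcK'$. The product decomposition across the partition $\cK = \cK_0 \amalg \cK_1$ (with $\cKR$ empty by assumption) is preserved throughout since the idempotent splitting is intrinsic.

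The main new step, and the principal obstacle, is assembling these into a compatible cospan rather than three independent vertices. Here I would exploit the observation of Corollary \ref{cor:piGcS} that smashing with $E\lr{\Gp}$ realises algebraic inversion of $\cS$ on homotopy. Since CDGA localization is an exact functor that commutes with homology and preserves quasi-isomorphisms, lifting the elements of $\cS$ to $\cOcK'$ and forming $\cSi\cOcK'$ yields a $\cW$-equivariant CDGA model for the middle vertex $(E\lr{\Gp} \sm DE\lr{\cK})^G$, and the canonical localization map $\cOcK' \to \cSi \cOcK'$ models the lower horizontal of the cospan. For the vertical map out of $\Q$ there is essentially no content: any two $\cW_{\Gp}$-equivariant CDGA maps from $\Q$ into $\cSi\cOcK'$ are homotopic because $\Q$ is initial in rational CDGAs, so the chosen unit can be matched to the spectrum-level map. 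Assembling these three compatibly yields the desired formal model of the entire cospan, which is the content of intrinsic formality.
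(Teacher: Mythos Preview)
Your strategy is the same as the paper's, and the simplification you make at the generic vertex---using that $\cO_{\Gp}=\Q$ is initial among rational CDGAs, so no Maschke-type lifting is needed there---is perfectly valid in the present setting (the paper treats $\cO_{\Gp}$ as a general symmetric algebra, presumably with an eye to the generalisations of Section~\ref{sec:generalizations}).

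There is, however, a genuine gap at the middle vertex. You assert that ``lifting the elements of $\cS$ to $\cOcK'$ and forming $\cSi\cOcK'$ yields a $\cW$-equivariant CDGA model for the middle vertex'', but you do not produce a quasi-isomorphism connecting $\cSi\cOcK'$ to the given middle CDGA $\cT'$. The obvious candidate would be to factor the structure map $\cOcK'\to\cT'$ through the localisation, but this fails: the chosen cycle lifts $\hat{\cS}\subset\cOcK'$ map to elements of $\cT'$ that are invertible \emph{in homology} (since $H_*(\cT')=\cSi\cOcK$), not invertible in $\cT'$ itself, so the universal property of CDGA localisation does not apply directly. The fix, which is exactly what the paper does, is to also localise the target: the map $\cT'\to\hat{\cS}^{-1}\cT'$ is a quasi-isomorphism because you are inverting elements already invertible in homology, and then $\hat{\cS}^{-1}\cOcK'\to\hat{\cS}^{-1}\cT'$ is the induced map, a quasi-isomorphism since on homology it is $\cSi\cOcK\to\cSi\cOcK$. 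This zig-zag is the missing link; once you insert it, the rest of your argument (including the initiality of $\Q$ for the right-hand leg) goes through.

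A minor point of phrasing: intrinsic formality means \emph{any} cospan with the given homology is equivalent to its homology, not just the specific one coming from $G$-fixed points. Your argument, once repaired as above, does establish this stronger statement since nothing beyond the homology is used, but you should say so.
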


\begin{proof}
The proof is essentially the same as for the circle. Let us suppose we
have a cospan $\cO'_{\Gp} \lra \cT'\lla \cO'$ with homology $\cO_{\Gp}\lra \cSi
\cOcK\lla \cOcK$.

Starting at $\cO_{\Gp}'$, we know $H_*(\cO_{\Gp}')=Z(\cO_{\Gp}')/B(\cO_{\Gp}')$ is isomorphic 
to $\cO_{\Gp}=\mcR (\Gp)$, which is the symmetic algebra on a finite 
dimensional vector space $V$ with an action of 
$\cW_\Gp$. By Maschke's Theorem we may find a 
submodule $V'$ of $Z(\cO_{\Gp}')$ mapping to $V$, and thus we may choose an 
equivariant homology ismorphism $\cV=\mathrm{Symm} (V)\lra \cO_{\Gp}'$.

 This gives an equivalence
$$\xymatrix{
\cO_{\Gp}'\dto & H^*BW_G^e(H)\dto\lto \\
\cT'\rto &\cT''=\cO_{\Gp}'\tensor_{\cT'}\cT'&\cT''\lto_= \\
\cO'\uto \rto^= &\cO'\uto&\cO'\uto \lto_=
}$$
and we may suppose $\cO_{\Gp}'$ is  standard. Now proceed as in
the diagram below. 

The
original diagram $\cO_{\Gp}\lra \cT \lla \cO$ has homology $\mcR(\Gp) \lra \cSi \cOcK
\lla \cOcK$. In the following diagram, all horizontals are homology
isomorphisms.  Most maps are self explanatory, but we note that
inverting additional classes already inverted in homology induces a
weak equivalence. We therefore choose $\hat{\cS}$ by adding
representative cycles to $\cS$ so the the relevant DGA receives a map.

$$\xymatrix{
\mcR (\Gp)\dto \rto & \mcR (\Gp)\dto &\mcR (\Gp) \lto \rto \dto&\mcR (\Gp)\dto &\mcR (\Gp) \dto\lto\\
\cSi \prod_KH^*(BW_G(K))\rto &\hat{\cS}^{-1} \prod_Ke_K\cO' &\hat{\cS}^{-1} \cO'  \lto\dto \rto  
&\hat{\cS}^{-1} \cT''& \cT''\lto \\
\prod_F H^*(BW_G^e(K)) \uto \rto & \prod_ Ke_K\cO' \uto &\cO'\lto \rto &\cO'\uto
&\cO' \lto \uto  }$$
\end{proof}




\subsection{The cellular skeleton theorem}
We have shown that the category of $G$-spectra over $\cV$ has a purely
algebraic model, which is the cellularization of a category of
modules. The cellularization means that the weak equivalences are not
obvious.  We complete the picture by showing that this
algebraic cellularization is equivalent to DG objects in the abelian
category $\cA (G|\cV)$ with the weak equivalences being homology
isomorphisms.

More precisely, $\cA (G|\cV)$ is a  coreflective subcategory of the category of
modules with cellular equivalences of DG objects being homology
isomorphisms, and the inclusion of abelian categories induces a Quillen equivalence.

\begin{lemma}
\label{lem:CST}
The cellularization of the category of
$\pi^G_*((S^0)^\cospan)$-modules has $\cA (G|\cV)$ as a cellular skeleton.
\end{lemma}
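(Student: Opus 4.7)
The plan is to verify the hypotheses of the Cellular Skeleton Theorem: identify the generating cells in the category of $\pi^G_*((S^0)^\cospan)$-modules, check they lie in $\cA(G|\cV)$, construct a coreflection onto $\cA(G|\cV)$, and then invoke the Cellularization Principle to promote an equivalence on cells into a Quillen equivalence. The injective dimension bound from Lemma \ref{lem:dimoneinjdim} underpins the whole argument, since it endows $DG\!-\!\cA(G|\cV)$ with the injective model structure having homology isomorphisms as weak equivalences.

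First I would identify the generating cells. Under the composite right Quillen equivalence threaded through Sections \ref{sec:DEGp}--\ref{sec:dimonemodels}, the orbits $G/K_+$ for $K \in \cV$ correspond to the cospan modules $\sigma_\Gp = (\cOcK \lra \cSi\cOcK\tensor\Q \lla \Q)$ at the generic point and $\sigma_K = (e_K\cOcK \lra 0 \lla 0)$ for $K \in \cK$, each carrying the appropriate $\cW$-equivariance and graded structure. A direct inspection shows both types satisfy torsion of the vertex, quasicoherence, and extendedness, hence lie in $\cA(G|\cV)$; these are precisely the cells already named in Subsection \ref{subsec:twosheaves}.

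Next I would construct a coreflection $\Gamma$ from $\pi^G_*((S^0)^\cospan)\modules$ onto $\cA(G|\cV)$. Given an arbitrary cospan module $(N \lra P \lla V)$, first replace $V$ by its $\cS$-torsion submodule $\Gamma_{\cS} V$, then enforce quasicoherence by substituting $\cSi N$ for $P$, and finally enforce extendedness by the pullback construction used to define $p$ in Subsection \ref{subsec:adj}. The universal property makes $\Gamma$ right adjoint to the inclusion, and on each generating cell $\sigma_\Gp, \sigma_K$ the unit and counit are the identity, so the adjunction is compatible with cellularization. I would then equip $\pi^G_*((S^0)^\cospan)\modules$ with the cellularized model structure generated by shifts of $\sigma_\Gp$ and the $\sigma_K$; the inclusion is a left Quillen functor whose derived counit is an isomorphism on generating cells, so the Cellularization Principle \cite{cellprin} promotes it to the desired Quillen equivalence with $DG\!-\!\cA(G|\cV)$.

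The main obstacle will be verifying that cellular equivalences of the ambient module category coincide with homology isomorphisms in $\cA(G|\cV)$ when $\cK$ is infinite. The key controls are the injective-dimension-one bound of Lemma \ref{lem:dimoneinjdim}, which makes Postnikov towers of cellular objects terminate so that their homology detects them; and, in the Type 1 case, the fact that the $p$--$e$ adjunction of Subsection \ref{subsec:adj} is only a \emph{cellular} equivalence, so one must pass through the pre-separated model $\cA_{ps}$ in order to detect equivalences on the nubs $e_F N$ separately from the spread over $\Gp$. The $\cW$-equivariance is handled throughout using Maschke's theorem, which ensures the finite-group actions do not alter the homological dimension and allow the generators and resolutions to be chosen equivariantly.
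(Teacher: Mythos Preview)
Your outline is in the right spirit but diverges from the paper's argument and has two genuine gaps.

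First, the construction of your coreflection $\Gamma$ is not right as stated. The torsion condition on the vertex $V$ is relative to $\mcR(\Gp)$, not to $\cS\subset\cOcK$, so ``$\Gamma_{\cS}V$'' is ill-typed; in the present setting $\mcR(\Gp)=\Q$ and the condition is vacuous. More seriously, the recipe ``substitute $\cSi N$ for $P$, then pull back as in $p$'' does not obviously assemble into a right adjoint to the inclusion of $\cA(G|\cV)$: the $p$-construction of Subsection~\ref{subsec:adj} takes input from the pre-separated model, not from an arbitrary cospan, and for a general $(N\to P\leftarrow V)$ the given structure map $V\to P$ need not factor through $\cSi N$. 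The paper does not attempt to build such a functor; instead it produces a cellular replacement by d\'evissage: map $X$ to $e(V)$, take the fibre $X'=(0\to P'\leftarrow N')$, localize to get $(0\to P'\leftarrow\cSi N')$, and show the latter is cellularly trivial by an explicit calculation of $\Hom(\sigma_{\Gp},-)$ and $\Hom(\sigma_K,-)$. The remaining fibre $(0\to 0\leftarrow\Gamma_{\cS}N')$ is $\cS$-torsion and hence built from the $\sigma_K$.

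Second, and more important, you omit the step showing that every object of $\cA(G|\cV)$ is cellular. Checking that the counit is the identity \emph{on cells} is not enough for the Cellularization Principle to yield an equivalence with $DG\!-\!\cA(G|\cV)$; you need the cells to detect homology isomorphisms there, equivalently that every object of $\cA(G|\cV)$ lies in the localizing subcategory they generate. The paper supplies this by observing that the basic injectives $e(H_*(BW_G^e(\Gp)))$ and $f_K(H_*(BW_G^e(K)))$ are images of the $G$-spectra $\etf$ and $DE\lr{K}\sm F_N(G_+,EW_G(K)_+)$ respectively, hence cellular, and then uses the length-one injective resolutions of Subsection~\ref{subsec:homalgstandard} to conclude for arbitrary objects. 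Your reference to Lemma~\ref{lem:dimoneinjdim} and ``Postnikov towers terminating'' gestures at this but does not deliver it: the injective dimension bound tells you resolutions are short, but you still need to know the injectives themselves are cellular, and that comes from identifying them with homotopy groups of specific spectra.
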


\begin{proof}
  First we note that the images of any $G$-spectrum lie in $\cA
  (G|\cV)$. This is because the homotopy of the horizontal map is
  inverting the multiplicatively closed set $\cS$ by Corollary
  \ref{cor:piGcS} (i),
  and the vertical map is extension of scalars along a flat map by
  Corollary \ref{cor:piGcS} (ii). In particular, the cells are in $\cA (G|\cV)$. 

To see that any object of $\cA(G|\cV) $ is cellular, we use the injective resolutions
from Subsection \ref{subsec:homalgstandard}. The point is that any object of $\cA (G|\cV)$
admits an embedding into an injective which is a sum of
injectives $e(H_*(BW_G^e(\Gp)))$ and $f_K(H_*(BW_G^e(K))$, whose
cokernel is a sum of injectives of the second type. Each of these
injectives are cellular because they are the images of
$G$-spectra. Indeed
$e(H_*(BW_G^e(\Gp))$ is the image of $\etf$, and $f_K(H_*(BW_G^e(K))$
is the image of $DE\lr{K}\sm F_N(G_+, EW_G(K)_+)$.

Finally, we argue that any object in the module category  is
cellularly equivalent to a an object of $\cA (G|\cV)$. Suppose then
that $X=(V\lra P \lla N)$ is an object of the module category. 
The functor $e$ is a right adjoint to evaluation so there is a map $X\lra
e(V)$ with fibre $X'=(0\lra P'\lla N')$. Since $e(V)$ is a wedge of
copies of $e(\Q)$ it is cellular, so it suffices to show $X'$ is
cellularly equivalent to an object of $\cA (G|\cV)$. Inverting $\cS$
gives a map $(0\lra P'\lla N')\lra (0\lra P'\lla \cSi N')$. The fibre $X''$
is $(0\lra 0\lla \Gamma_{\cS}N')$ with $\Gamma_{\cS}N'$ being
$\cS$-torsion and hence cellular. It remains to observe
$(0\lra P'\lla \cSi N')$ is cellularly trivial, since then we have
$X''\simeq X'$ and a cofibre sequence $X'\lra X \lra e(V). $

To show cellular triviality of $(0\lra P'\lla \cSi N')$ we check maps
out of cells. There are no maps from $\sigma_K$ for $K\in \cK$ because $\cSi N'$ is
torsion free. Finally maps from  $\sigma_{\Gp}$ are calculated from
the pullback square
$$\xymatrix{
  \Hom(\sigma_{\Gp} ,X)\rto \dto &\Hom (\Q , V)\dto \\
\Hom_{\cOcK}(\cOcK, N)\rto &\Hom_{\cOcK}(\cOcK, \cSi \cOcK
\tensor V)    }$$
which in our case is 
$$\xymatrix{
  \Hom (\sigma_{\Gp}, X'')\rto \dto &0\dto\\
  \cSi N'\rto &P'
  }$$

\end{proof}

\section{Towards the general 1-dimensional case}
\label{sec:generalizations}
We consider the restrictions currently imposed, and how they might be
avoided.

\begin{description}
\item [drop the restriction that $\Gp$ is of finite index in its normalizer]

  The first issue is that $E\lr{\Gp}$ needs to be replaced by a
  different ring spectrum. If $\Gp$ is normal in $G$ this means
  $\siftyV{\Gp}\sm DEG/\Gp_+$. Probably in general it should be the
  $\Gp$-localization of the $\Gp$-completion of the sphere.
 Adapting the proof of the pullback square should be a straightforward.
 Under  appropriate group theoretic conditions there is a more explicit
  model of the ring $G$-spectrum, and this will enable calculations of 
homotopy groups to link  to algebra. 
  
  The  effect on the algebraic model is that $\cO_{\Gp}$ will be a more general polynomial 
ring, so the models will be of higher homological dimension. One also needs to 
discuss the relationships between normalizers of $\Gp$ and subgroups in 
$\cK$. If $N_G(\Gp)$ contains all subgroups $N_G(F)$ it is 
straightforward, but this should be weakened, so that it covers cases
like $\Gp=SO(2)$ in $G=SO(3)$. 

\item [drop the restriction that the subgroups $F$ in $\cK$ are 
  finite]

 Our proof of the formality of $DE\lr{\cK}$  involves a certain 
 uniformity of behaviour. It should be sufficient that all elements of $\cK$
 have a common identity component. Since we may always use 
 idempotents, it should suffice that  $\cK$ can be broken into finitely 
 many parts each of which consists of subgroups with a common identity 
 component. 

  \item [drop the restriction that $\cV$ has a single compactifying 
    point] 
    
The general case that $\cV$ is a 1-dimensional spectral space 
introduces significant complication. For example, even with only 
finitely many height 1 subgroups the model needs to incorporate 
the combinatorics of specialization and how it interacts with the 
containments of normalizers. 

\end{description}

 \bibliographystyle{plain}
\bibliography{../../jpcgbib}

\begin{thebibliography}{10}

\bibitem{prismatic}
S.~Balchin, T.~Barthel, and J.~P.~C. Greenlees.
\newblock Prismatic decompositions and rational {$G$}-spectra.
\newblock {\em Preprint, 59pp, arXiv 2311.18808}, 2023.

\bibitem{adelicm}
Scott Balchin and J.~P.~C. Greenlees.
\newblock Adelic models for tensor triangulated categories.
\newblock {\em Advances in Mathematics}, 375:30pp, 2020.

\bibitem{adelic1}
Scott Balchin and J.~P.~C. Greenlees.
\newblock Separated and complete adelic models for one-dimensional {N}oetherian
  tensor-triangulated categories.
\newblock {\em J. Pure Appl. Algebra}, 226(12):Paper No. 107109, 42, 2022.

\bibitem{gtoralq}
David Barnes, J.P.C. Greenlees, and Magdalena K{\c{e}}dziorek.
\newblock An algebraic model for rational toral {$G$}-spectra.
\newblock {\em Algebr. Geom. Topol.}, 19(7):3541--3599, 2019.

\bibitem{EFQ}
J.~P.~C. Greenlees.
\newblock A rational splitting theorem for the universal space for almost free
  actions.
\newblock {\em Bull. London Math. Soc.}, 28(2):183--189, 1996.

\bibitem{ratmack}
J.~P.~C. Greenlees.
\newblock Rational {M}ackey functors for compact {L}ie groups. {I}.
\newblock {\em Proc. London Math. Soc. (3)}, 76(3):549--578, 1998.

\bibitem{o2q}
J.~P.~C. Greenlees.
\newblock Rational {O}(2)-equivariant cohomology theories.
\newblock In {\em Stable and unstable homotopy (Toronto, ON, 1996)}, volume~19
  of {\em Fields Inst. Commun.}, pages 103--110. Amer. Math. Soc., Providence,
  RI, 1998.

\bibitem{s1q}
J.~P.~C. Greenlees.
\newblock Rational {$S\sp 1$}-equivariant stable homotopy theory.
\newblock {\em Mem. Amer. Math. Soc.}, 138(661):xii+289, 1999.

\bibitem{AGconj}
J.~P.~C. Greenlees.
\newblock Triangulated categories of rational equivariant cohomology theories.
\newblock {\em Oberwolfach Reports, pages 480–488, 2006. (cit. on p. 2)},
  2006.

\bibitem{AGtoral}
J.~P.~C. Greenlees.
\newblock Rational equivariant cohomology theories with toral support.
\newblock {\em Algebr. Geom. Topol.}, 16(4):1953--2019, 2016.

\bibitem{bgen}
J.~P.~C. Greenlees.
\newblock Borel cohomology and the relative gorenstein condition for
  classifying spaces of compact lie groups.
\newblock {\em Preprint}, page~10, 2018.

\bibitem{specgq}
J.~P.~C. Greenlees.
\newblock The {B}almer spectrum of rational equivariant cohomology theories.
\newblock {\em J. Pure Appl. Algebra}, 223(7):2845--2871, 2019.

\bibitem{t2wqalg}
J.~P.~C. Greenlees. 
\newblock Spaces of subgroups of toral groups. 
\newblock {\em Preprint, 38pp, arXiv:2501.06914}, 2025. 


\bibitem{t2wq}
J.~P.~C. Greenlees. 
\newblock Rational {$G$}-spectra for small toral groups. 
\newblock {\em Preprint, 31pp}, 2025.

\bibitem{u2q}
J.~P.~C. Greenlees. 
\newblock An algebraic model for rational {$U(2)$}-spectra. 
\newblock {\em In preparation, 15pp}, 2025.

\bibitem{su3q}
J.~P.~C. Greenlees.
\newblock An algebraic model for rational {$SU(3)$}-spectra.
\newblock {\em In preparation, 11pp}, 2025.

\bibitem{gqwf}
J.~P.~C. Greenlees.
\newblock Rational {$G$}-spectra for components with finite {W}eyl groups.
\newblock {\em In preparation, 4pp}, 2025.


\bibitem{AVmodel}
J.~P.~C. Greenlees. 
\newblock An abelian model for rational {$G$}-spectra for a compact {L}ie group 
  {$G$}. 
\newblock {\em In preparation, 27pp}. 

\bibitem{AGnoeth}
J.~P.~C. Greenlees. 
\newblock An algebraic model for rational {$G$}-spectra for finite central 
  extensions of a torus. 
\newblock {\em In preparation, 18pp}. 



\bibitem{cellprin}
J.~P.~C. Greenlees and B.~Shipley.
\newblock The cellularization principle for {Q}uillen adjunctions.
\newblock {\em Homology Homotopy Appl.}, 15(2):173--184, 2013.

\bibitem{gfreeq2}
J.~P.~C. Greenlees and B.~Shipley.
\newblock An algebraic model for free rational {$G$}-spectra.
\newblock {\em Bull. Lond. Math. Soc.}, 46(1):133--142, 2014.

\bibitem{modfps}
J.~P.~C. Greenlees and B.~Shipley.
\newblock Fixed point adjunctions for equivariant module spectra.
\newblock {\em Algebr. Geom. Topol.}, 14(3):1779--1799, 2014.

\bibitem{diagrams}
J.~P.~C. Greenlees and B.~Shipley.
\newblock Homotopy theory of modules over diagrams of rings.
\newblock {\em Proc. Amer. Math. Soc. Ser. B}, 1:89--104, 2014.

\bibitem{tnqcore}
J.~P.~C. Greenlees and B.~Shipley.
\newblock An algebraic model for rational torus-equivariant spectra.
\newblock {\em J. Topol.}, 11(3):666--719, 2018.

\bibitem{PWcomp}
Luca Pol and Jordan Williamson.
\newblock The homotopy theory of complete modules.
\newblock {\em J. Algebra}, 594:74--100, 2022.

\bibitem{ShipleyHZ}
Brooke Shipley.
\newblock {HZ}-algebra spectra are differential graded algebras.
\newblock {\em American Journal of Mathematics}, 129(2):351--379, 2007.

\bibitem{tomDieckTGRT}
Tammo tom Dieck.
\newblock {\em Transformation groups and representation theory}, volume 766 of
  {\em Lecture Notes in Mathematics}.
\newblock Springer, Berlin, 1979.

\end{thebibliography}
\end{document}